\definecolor{modra3}{rgb}{.1,.0,.4}
\newtheorem{theorem}{Theorem}[section]
\newtheorem{lemma}[theorem]{Lemma}
\newtheorem{observation}[theorem]{Observation}
\newtheorem{corollary}[theorem]{Corollary}
\newtheorem{proposition}[theorem]{Proposition}
\newtheorem{claim}[theorem]{Claim}
\newtheorem{conjecture}{Conjecture}
\newtheorem*{claim_}{Claim}
\def\mycite{~\cite}
\def\mydotl{}
\def\mycommal{}
\def\mydotr{.  }
\def\mycommar{,  }
\def\hell{l}
\newcolumntype{L}[1]{>{\raggedright\let\newline\\\arraybackslash\hspace{0pt}}m{#1}}
\newcolumntype{C}[1]{>{\centering\let\newline\\\arraybackslash\hspace{0pt}}m{#1}}
\newcolumntype{R}[1]{>{\raggedleft\let\newline\\\arraybackslash\hspace{0pt}}m{#1}}
\begin{document}

\title{Crossing numbers and combinatorial characterization of monotone drawings of $K_n$\thanks{
The authors were supported by the grant GA\v{C}R GIG/11/E023 GraDR in the framework of ESF EUROGIGA program. The first and the third author were also supported by the Grant Agency of the Charles University, GAUK 1262213, and by the grant SVV-2013-267313 (Discrete Models and Algorithms). The third author was also partially supported by ERC Advanced Research Grant no 267165 (DISCONV). The second author gratefully acknowledges support from the Swiss National Science Foundation Grant PBELP2\_146705.
} 
} 

\author{Martin Balko\thanks{
Department of Applied Mathematics, 
Charles University, Faculty of Mathematics and Physics, 
Malostransk\'e n\'am.~25, 118 00~ Praha 1, Czech Republic; 
\texttt{balko@kam.mff.cuni.cz}
}
\and Radoslav Fulek\thanks{Department of Applied Mathematics, 
Charles University, Faculty of Mathematics and Physics, 
Malostransk\'e n\'am.~25, 118 00~ Praha 1, Czech Republic; and
IEOR, Columbia University, NYC, NY, USA;
\texttt{radoslav@kam.mff.cuni.cz}
}
\and Jan Kyn\v{c}l\thanks{
Department of Applied Mathematics and Institute for Theoretical Computer
Science, Charles University, Faculty of Mathematics and Physics, 
Malostransk\'e n\'am.~25, 118 00~ Praha 1, Czech Republic; and 
Alfr\'ed R\'enyi Institute of Mathematics, Re\'altanoda u. 13-15, Budapest 1053, Hungary;
\texttt{kyncl@kam.mff.cuni.cz}
}
} 

\date{}

\maketitle

\begin{abstract}
In 1958, Hill conjectured that the minimum number of crossings in a drawing of $K_n$ is exactly
$Z(n) = \frac{1}{4} \lfloor\frac{n}{2}\rfloor \left\lfloor\frac{n-1}{2}\right\rfloor \left\lfloor\frac{n-2}{2}\right\rfloor\left\lfloor\frac{n-3}{2}\right\rfloor$. Generalizing the result by \'{A}brego {\it et al.} for 2-page book drawings, we prove this conjecture for plane drawings in which edges are represented by $x$-monotone curves. In fact, our proof shows that the conjecture remains true for $x$-monotone drawings of $K_n$ in which adjacent edges may cross an even number of times, and instead of the crossing number we count the pairs of edges which cross an odd number of times.
We further discuss a generalization of this result to shellable drawings, a notion introduced by \'{A}brego {\it et al.} 
We also give a combinatorial characterization of several classes of $x$-monotone drawings of complete graphs using a small set of forbidden configurations. For a similar local characterization of shellable drawings, we generalize Carath\'eodory's theorem to simple drawings of complete graphs.
\end{abstract}

\section{Introduction}

Let $G$ be a graph with no loops or multiple edges. In a {\em drawing $D$ of a graph $G$} in the plane, the vertices are represented by distinct points and each edge is represented by a simple continuous arc connecting the images of its endpoints. As usual, we identify the vertices and their images, as well as the edges and the arcs representing them. We require that the edges pass through no vertices other than their endpoints. We also assume for simplicity that any two edges have only finitely many points in common, no two edges {\em touch\/} at an interior point and no three edges meet at a common interior point.

A {\em crossing in $D$} is a common interior point of two edges where they properly cross.
The {\em crossing number ${\rm cr}(D)$ of a drawing $D$} is the number of crossings in $D$. The {\em crossing number ${\rm cr}(G)$ of a graph $G$} is the minimum of ${\rm cr}(D)$, taken over all drawings $D$ of $G$. A drawing $D$ is called {\em simple\/} if no two adjacent edges cross and no two edges have more than one common crossing. It is well known and easy to see that every drawing of $G$ which minimizes the crossing number is simple. 

According to the famous conjecture of Hill\mycite{guy60,HH63_on_number} (also known as Guy's conjecture), the crossing number of the complete graph $K_n$ on $n$ vertices satisfies ${\rm cr}(K_n) = Z(n)$, where 
\[Z(n) = \frac{1}{4} \bigg\lfloor\frac{n}{2}\bigg\rfloor \left\lfloor\frac{n-1}{2}\right\rfloor \left\lfloor\frac{n-2}{2}\right\rfloor\left\lfloor\frac{n-3}{2}\right\rfloor.\] 
This conjecture has been verified for $n \le 10$ by Guy\mycite{guy72} and recently for $n \le 12$ by Pan and Richter\mydotl\mycite{PR07_K11}\mydotr Moreover for each $n$, there are drawings of $K_n$ with exactly $Z(n)$ crossings\mydotl\mycite{BlaK64_minimal,guy60,HH63_on_number,harborth02}\mydotr
Current best asymptotic lower bound, ${\rm cr}(K_n)\ge 0.8594 Z(n)$, follows from the lower bound on the crossing number of the complete bipartite graph\mycite{KPS07_reduction} by an elementary double-counting argument\mydotl\mycite{RT97_relations}\mydotr

A curve $\alpha$ in the plane is {\em x-monotone\/} if every vertical line intersects $\alpha$ in at most one point.
A drawing of a graph $G$ in which every edge is represented by an $x$-monotone curve and no two vertices share the same $x$-coordinate is called {\em $x$-monotone\/} (or {\em monotone}, for short). The {\em monotone crossing number\/} $\text{mon-cr}(G)$ {\em of a graph $G$} is the minimum of ${\rm cr}(D)$, taken over all monotone drawings $D$ of $G$.

The {\em rectilinear crossing number $\mathrm{\overline{cr}}(G)$ of a graph $G$} is the smallest number of crossings in a drawing of $G$ where every edge is represented by a straight-line segment. Since every rectilinear drawing of $G$ in which no two vertices share the same $x$-coordinate is $x$-monotone, we have 
${\rm cr}(G) \le \text{mon-cr}(G) \le \mathrm{\overline{cr}}(G)$
for every graph $G$.

The {\em odd crossing number} $\text{ocr}(G)$ of a graph $G$ is the minimum number of pairs of edges crossing an odd number of times in a drawing of $G$ in the plane. The {\em monotone odd crossing number}, $\text{mon-ocr}(G)$, is the minimum number of pairs of edges crossing an odd number of times in a monotone drawing of $G$. For these two notions of the crossing number, optimal drawings do not have to be simple. Moreover, there are graphs $G$ with $\text{ocr}(G)<\text{cr}(G)$\mycommal\mycite{PSS08_odd,T08_odd_spiders}\mycommar and for every $n$, there is a graph $G$ with $\text{mon-ocr}(G)=1$ and $\text{mon-cr}(G)\ge n$\mydotl\mycite{FPSS13_ht_monotone}\mydotr

We call a drawing of a graph {\em semisimple\/} if adjacent edges do not cross but independent edges may cross more than once. The {\em monotone semisimple odd crossing number\/} of $G$ (called {\em monotone odd $+$} by Schaefer\mycite{S14_survey}),
denoted by $\text{mon-ocr}_+(G)$, is the smallest number of pairs of edges that cross an odd number of times in a monotone semisimple drawing of $G$. We call a drawing of a graph {\em weakly semisimple\/} if every pair of adjacent edges cross an even number of times; independent edges may cross arbitrarily. The {\em monotone weakly semisimple odd crossing number\/} of $G$, denoted by $\text{mon-ocr}_{\pm}(G)$, is the smallest number of pairs of edges that cross an odd number of times in a monotone weakly semisimple drawing of $G$.
Clearly,
$\text{mon-ocr}(G) \le \text{mon-ocr}_{\pm}(G) \le \text{mon-ocr}_+(G) \le \text{mon-cr}(G).$


The monotone crossing number has been introduced by Valtr\mycite{valtr05_on_pair} and recently further investigated by
Pach and
T\'{o}th\mycommal\mycite{PT12_monotone}\mycommar who showed that
$\text{mon-cr}(G) < 2{\rm cr}(G)^2$ holds for every graph $G$.
On the other hand, they showed that the monotone crossing number and the crossing number are not always the same: there are graphs $G$ with arbitrarily large crossing numbers such that
$\text{mon-cr}(G) \ge \frac{7}{6}{\rm cr}(G)-6.$


We study the monotone crossing numbers of complete graphs.
The drawings of complete graphs with $Z(n)$ crossings obtained by Bla\v{z}ek and Koman\mycite{BlaK64_minimal} (see also\mycite{harborth02}) are {\em $2$-page book\/} drawings. In such drawings the vertices are placed on a line $l$ and each edge is fully contained in one of the half-planes determined by $l$. Since 2-page drawings may be considered as a strict subset of $x$-monotone drawings, we have $\text{mon-cr}(K_n) \le Z(n)$.

\begin{figure}
 \begin{center}
	\includegraphics{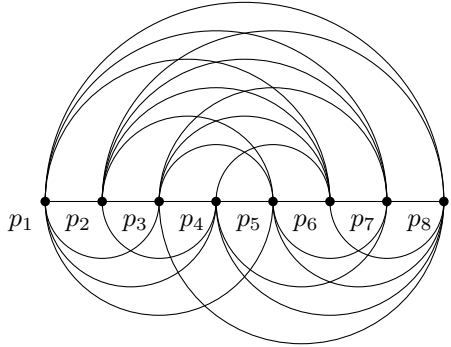}
	\caption{An example of a 2-page book drawing of $K_8$ with $Z(8)=18$ crossings obtained by Bla\v{z}ek and Koman\mydotl\protect\mycite{BlaK64_minimal}\mydotr}
	\label{fig:blazekDrawing}
 \end{center}
\end{figure}

\'{A}brego {\it et al.}\mycite{abre12_2page} recently proved that Hill's conjecture holds for $2$-page book drawings of complete graphs.
We generalize their techniques and show that Hill's conjecture holds for all $x$-monotone drawings of complete graphs, and even for the monotone weakly semisimple odd crossing number.

\begin{theorem}
\label{theorem_1}
For every $n \in \mathbb{N}$, we have
\[\text{\rm mon-ocr}_{\pm}(K_n) = \text{\rm mon-ocr}_+(K_n) = \text{\rm mon-cr}(K_n) = Z(n).\]
\end{theorem}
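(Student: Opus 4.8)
The plan is to reduce everything to a single lower bound, namely $\text{mon-ocr}_{\pm}(K_n) \ge Z(n)$. Indeed, the chain $\text{mon-ocr}_{\pm}(K_n) \le \text{mon-ocr}_+(K_n) \le \text{mon-cr}(K_n)$ from the introduction, together with the upper bound $\text{mon-cr}(K_n) \le Z(n)$ coming from the Bla\v{z}ek--Koman $2$-page drawings (Figure~\ref{fig:blazekDrawing}), sandwiches all three middle quantities between $\text{mon-ocr}_{\pm}(K_n)$ and $Z(n)$. Hence it suffices to show that every monotone weakly semisimple drawing $D$ of $K_n$ has at least $Z(n)$ pairs of edges crossing an odd number of times. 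By the weakly semisimple hypothesis, adjacent edges cross an even number of times and contribute nothing, so I only need to bound the number of \emph{independent} pairs that cross oddly.

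First I would fix such a drawing $D$, order its vertices $1 < 2 < \cdots < n$ by $x$-coordinate, and retain only the crossing \emph{parities} $\mathrm{icr}(e,f) \in \{0,1\}$ of independent pairs, so that the whole argument depends on $D$ solely through this parity data. The core idea is to transport the $k$-edge machinery of \'{A}brego {\it et al.} from $2$-page to $x$-monotone drawings at the level of parities. Using the left-to-right order together with the triangle parities on triples (which are well defined precisely because adjacent edges cross evenly), I would assign to each directed edge an integer $k$ counting the vertices on a prescribed side, define the quantities $E_k(D)$ and $E_{\le k}(D) = \sum_{i \le k} E_i(D)$, and prove the standard crossing identity expressing the number of odd-crossing independent pairs as a fixed nonnegative combination of the $E_{\le k}(D)$. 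The point is that this identity involves only parities, so a weakly semisimple monotone drawing behaves exactly like a simple one for its purposes.

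The crux is the combinatorial lower bound $E_{\le k}(D) \ge 3\binom{k+2}{2}$ for every $0 \le k \le \lfloor n/2\rfloor - 2$, matching the count realized by the optimal drawing. Here monotonicity is used essentially: sweeping a vertical line from left to right linearly orders the edges it meets and lets me run an inductive/extremal argument on the small-$k$ edges, as in the $2$-page case. Substituting this bound into the crossing identity and summing, the expression telescopes to exactly $Z(n)$, which gives $\text{mon-ocr}_{\pm}(K_n) \ge Z(n)$ and closes the chain of equalities.

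I expect the main obstacle to lie in the second and third steps: verifying that the parity $k$-edges, defined purely from crossing-parity data, genuinely satisfy the structural axioms (in particular the interleaving/``continuity'' property relating consecutive values of $k$) needed both to state the crossing identity and to push through the $3\binom{k+2}{2}$ bound. In a simple rectilinear drawing these properties are geometric, whereas here they must be derived combinatorially from monotonicity and the even-adjacency condition; it is conceivable that the weakly semisimple relaxation forces extra care (e.g.\ choosing orientations and base points consistently) to keep the parity $k$-edge count well defined.
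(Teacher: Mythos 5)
Your overall architecture is the same as the paper's: reduce via the inequality chain and the Bla\v{z}ek--Koman $2$-page upper bound to a single lower bound $\text{mon-ocr}_{\pm}(K_n)\ge Z(n)$, and prove that bound with the $k$-edge machinery of \'{A}brego {\it et al.} However, the inequality you identify as the crux, $E_{\le k}(D) \ge 3\binom{k+2}{2}$ for all $0\le k\le \lfloor n/2\rfloor-2$, is \emph{false} for monotone drawings: as the paper points out in Section~\ref{section2}, there are simple $2$-page (hence $x$-monotone) drawings of $K_n$ with $E_{\le 1} < 3\binom{3}{2}=9$\mycite{abre12_2page}. That levelwise bound is a genuinely geometric fact about rectilinear/pseudolinear drawings and is exactly what breaks when one passes to monotone drawings; your telescoping argument would therefore start from a false premise. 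The missing idea --- the key innovation imported from\mycite{abre12_2page} --- is to pass to the doubly cumulative counts $E_{\le\le k}(D)=\sum_{j\le k}E_{\le j}(D)$: Lemma~\ref{lemma2} expresses $\mathrm{ocr}(D)$ of a semisimple drawing as a nonnegative combination of the $E_{\le\le k}(D)$ (via the $K_4$ classification, $\mathrm{ocr}+E_1=3$), and Theorem~\ref{thm2} proves the weaker, cumulative bound $E_{\le\le k}(D)\ge 3\binom{k+3}{3}$, which survives even though the levelwise bound fails. Its proof is not a straightforward sweep: it is an induction on $n$ that deletes the rightmost vertex $v_n$ and charges the loss against $(D,D')$-\emph{invariant} $\le k$-edges, whose count $E_{\le k}(D,D')\ge\binom{k+2}{2}$ (Lemma~\ref{lemma_o_vnitrnich_vrcholech}, Corollary~\ref{cor_invariant}) is where monotonicity and Observation~\ref{obs_outer} enter.

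A secondary divergence: you propose to build the whole machinery directly on crossing parities of the weakly semisimple drawing. The paper instead proves the bound for \emph{semisimple} monotone drawings and then invokes Proposition~\ref{proposition_prekreslovaci}, which redraws any weakly semisimple monotone drawing into a semisimple one with the same rotation system and the same set of oddly crossing pairs; this cleanly disposes of the $\pm$ case. Your direct-parity route is salvageable in principle (Section~\ref{section_+-} carries it out, defining triangle orientations by winding-number parity), but it requires extra care you did not anticipate: for weakly semisimple drawings of $K_4$ the odd crossing number can be $2$ or even $3$, so the exact identity of Lemma~\ref{lemma2} degrades to an inequality (Lemma~\ref{lemma3}) --- still sufficient for a lower bound, but a different statement than the one you plan to prove.
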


The rectilinear crossing number of $K_n$ is known to be asymptotically larger than $Z(n)$: this follows from the best current lower bound $\mathrm{\overline{cr}}(K_n) \ge (277/729){{n}\choose{4}} - O(n^3)$\mycite{abre11_halving,abre08_central} and from the simple upper bound $Z(n) \le \frac{3}{8}{n \choose 4}+O(n^3)$.

See a recent survey by Schaefer\mycite{S14_survey} for an encyclopedic treatment of all known variants of crossing numbers.

During the preparation of this paper, we were informed that the authors of\mycite{abre12_2page} achieved the result $\text{\rm mon-cr}(K_n) = Z(n)$ already during discussions after their presentation at SoCG 2012 and that Silvia Fernandez-Merchant was going to present it in her keynote talk at LAGOS 2013. The proceedings of the conference were recently published\mydotl\mycite{abre13_more}\mydotr Pedro Ramos\mycite{ram13_egc} then presented the results and some further developments at the XV Spanish Meeting on Computational Geometry (ECG 2013) in his invited talk. Very recently, \'{A}brego {\it et al.}\mycite{aamrs13_shellable} made their paper containing a more general result publicly available.

In Section~\ref{section2}, we first prove Theorem~\ref{theorem_1} for semisimple monotone drawings. Then we extend the result to weakly semisimple monotone drawings, by showing that even crossings of adjacent edges can be easily eliminated in such drawings.

In Section~\ref{section3} we introduce a combinatorial characterization of $x$-monotone drawings of $K_n$. We show that there is a one-to-one correspondence between semisimple, simple or pseudolinear $x$-monotone drawings of $K_n$ and  
mappings ${[n]\choose 3}\rightarrow \{+,-\}$, called {\em signature functions\/}, avoiding a finite number of certain sub-configurations. The signature functions were introduced by Peters and Szekeres\mycite{SP06_computer_17} as a generalization of order types of planar points sets. 

In Section~\ref{section_+-} we show a further generalization of Theorem~\ref{theorem_1} to shellable drawings and weakly shellable drawings; we define these notions in the beginning of Section~\ref{section_+-}. We show a local characterization of shellable drawings, for which we generalize Caratheodory's theorem to simple drawings of complete graphs. We also show that shellable drawings form a more general class than monotone drawings.
Finally, we further generalize a key lemma from\mycite{abre12_2page}, which implies a generalization of the main result of\mycite{aamrs13_shellable} to weakly semisimple drawings.

In the last section we state our stronger version of Hill's conjecture.


\section{Monotone crossing number of the complete graph}\label{section2}

Let $P$ denote a set of $n$ points in the plane in general position and let $k$ be an integer satisfying $0 \le k \le n$. The line segment joining a pair of points $p$ and $q$ in $P$ is a {\em $k$-edge\/} ({\em ${\le}k$-edge}) if there are exactly (at most, respectively) $k$ points of $P$ in one of the open half-planes defined by the line $pq$.

\'{A}brego and Fern\'{a}ndez-Merchant\mycite{abre05_lower_rect} and Lov\'{a}sz {\it et al.}\mycite{LVWW04_quadrilaterals} discovered a relation between the numbers of {\em $k$-edges\/} (or {\em ${\le}k$-edges}) in $P$ and the number of convex $4$-tuples of points in $P$, which is equal to the number of crossings of the complete geometric graph with vertex set $P$. This relation transforms every lower bound on the number of ${\le}k$-edges to a lower bound on the number of crossings. Using this method, many incremental improvements on the rectilinear and pseudolinear crossing number of $K_n$ have been achieved\mydotl\mycite{ABFLS08_extended_lower,abre11_halving,abre05_lower_rect,AGOR07_new_lower,BS06_ksets,LVWW04_quadrilaterals}\mydotr

To prove the lower bound on the $2$-page crossing number of $K_n$, \'{A}brego {\it et al.}\mycite{abre12_2page} generalized the notion of $k$-edges to arbitrary simple drawings of complete graphs. They also introduced the notion of ${\le}{\le}k$-edges, which capture the essential properties of $2$-page book drawings better than ${\le}k$-edges. 
We show that the approach using ${\le}{\le}k$-edges can be generalized to arbitrary semisimple $x$-monotone drawings.

For a semisimple drawing $D$ of $K_n$ and distinct vertices $u$ and $v$ of $K_n$, let $\gamma$ be the oriented arc representing the edge $\{u,v\}$. If $w$ is a vertex of $K_n$ different from $u$ and $v$, then we say that $w$ is {\em on the left (right) side of $\gamma$} if the topological triangle $uvw$ with vertices $u$, $v$ and $w$ traced in this order is oriented counter-clockwise (clockwise, respectively). This generalizes the definition introduced by \'{A}brego {\it et al.}\mycite{abre12_2page} for simple drawings. Further generalization is possible for weakly semisimple drawings, where every two edges of the triangle $uvw$ cross an even number of times; see Section~\ref{section_+-}. However, we were not able to find a meaningful generalization of this notion to arbitrary drawings, where the edges of the triangle $uvw$ can cross an odd number of times.

A {\em $k$-edge\/} in $D$ is an edge $\{u,v\}$ of $D$ that has exactly $k$ vertices on the same side (left or right). Since every $k$-edge has $n-2-k$ vertices on the other side, every $k$-edge is also an $(n-2-k)$-edge and so every edge of $D$ is a $k$-edge for some integer $k$ where $0 \le k \le \lfloor n/2 \rfloor -1$.

Analogously to the case of point sets, an $i$-edge in $D$ with $i \le k$ is called a {\em ${\le}k$-edge}. Let $E_i(D)$ be the number of $i$-edges and $E_{{\le}k}(D)$ the number of ${\le}k$-edges of $D$. Clearly, $E_{{\le}k}(D) = \sum_{i=0}^{k}E_{i}(D)$. Similarly,
the {\em number $E_{{\le}{\le}k}(D)$ of ${\le}{\le}k$-edges\/} of $D$ is
 defined by the following identity.
 \begin{equation}
E_{{\le}{\le}k}(D) = \sum_{j=0}^{k}E_{{\le}j}(D) = \sum_{i=0}^{k}(k+1-i)E_{i}(D).
\label{eq:kkE}
\end{equation}

Considering the only three different simple drawings of $K_4$ up to a homeomorphism of the plane, \'{A}brego {\it et al.}\mycite{abre12_2page} showed that the number of crossings in a simple drawing $D$ of $K_n$ can be expressed in terms of the number of $k$-edges in the following way.

\begin{lemma}[\cite{abre12_2page}]
\label{lemma1}
For every simple drawing $D$ of $K_n$ we have
\begin{equation}
{\rm cr}(D)= 3{{n}\choose{4}} - \sum_{k=0}^{\lfloor n/2 \rfloor - 1}k(n-2-k)E_{k}(D), \label{eq:crossings}
\end{equation}
which can be equivalently rewritten as
\[
{\rm cr}(D) = \;2 \sum_{k=0}^{\lfloor n/2 \rfloor-2} {E_{{\le}{\le}k}(D)}-\frac{1}{2}{n \choose 2}\left\lfloor\frac{n-2}{2}\right\rfloor -\frac{1}{2}\left(1+(-1)^n\right)E_{{\le}{\le}\lfloor n/2 \rfloor-2}(D).
\]

\end{lemma}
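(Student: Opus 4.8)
The plan is to prove the two displayed formulas separately: first the $k$-edge formula~\eqref{eq:crossings} by double counting over $4$-element vertex subsets, and then deduce the ${\le}{\le}k$-edge formula from it by a purely algebraic manipulation. For the first identity, I would rewrite $\sum_k k(n-2-k)E_k(D)$ as a sum over $4$-subsets. The point is that a $k$-edge $\{u,v\}$ has $k$ vertices on one side and $n-2-k$ on the other, so $k(n-2-k)$ counts exactly the unordered pairs $\{w,x\}$ lying on opposite sides of $\{u,v\}$. Summing over all edges and interchanging the order of summation,
\[ \sum_{k=0}^{\lfloor n/2\rfloor-1} k(n-2-k)E_k(D) = \sum_{S} s(S), \]
where $S$ ranges over all $4$-element vertex subsets and $s(S)$ is the number of pairs $\{u,v\}\subseteq S$ for which the two remaining vertices of $S$ lie on opposite sides of $\{u,v\}$. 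Since $D$ is simple, the induced drawing $D[S]$ is one of the three drawings of $K_4$ up to homeomorphism of the plane, and inspecting these three drawings (as in~\cite{abre12_2page}) shows that $s(S)=2$ if $D[S]$ has a crossing and $s(S)=3$ if it does not. Each crossing of $D$ involves four distinct vertices and lies in a unique $D[S]$, and each $D[S]$ has at most one crossing, so with $X={\rm cr}(D)$ we get
\[ \sum_S s(S) = 2X + 3\left(\binom n4 - X\right) = 3\binom n4 - X, \]
which rearranges to~\eqref{eq:crossings}.

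For the second identity, which is a purely algebraic reformulation, I would first record the edge-count constraint $\sum_{i=0}^{\lfloor n/2\rfloor-1}E_i(D)=\binom n2$ together with the elementary summation
\[ 2\sum_{k=0}^{m}E_{{\le}{\le}k}(D)=\sum_{i=0}^{m}(m-i+1)(m-i+2)\,E_i(D), \qquad m=\lfloor n/2\rfloor-2, \]
obtained from~\eqref{eq:kkE} by reversing the order of summation. It then suffices to match, as linear functions of the variables $E_i(D)$ subject to the constraint above, the right-hand side of~\eqref{eq:crossings} against the claimed expression. The coefficient of $E_i(D)$ on the two sides differs by a quantity that turns out to be independent of $i$; the top index $i=\lfloor n/2\rfloor-1$ (the balanced edges, present only when $n$ is even) is exactly accounted for by the correction term $\tfrac12(1+(-1)^n)E_{{\le}{\le}\lfloor n/2\rfloor-2}(D)$. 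Finally, multiplying the edge-count constraint by this common coefficient difference reconciles the constant terms $3\binom n4$ and $\tfrac12\binom n2\lfloor\tfrac{n-2}2\rfloor$. I would carry out this bookkeeping separately for $n$ even and $n$ odd.

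The main obstacle is the first step: verifying, in the purely topological setting, that $s(S)\in\{2,3\}$ with the value determined solely by whether $D[S]$ is crossing-free. Unlike the rectilinear case, where this follows from whether a point lies inside or outside a triangle, here it requires the classification of simple drawings of $K_4$ up to homeomorphism of the plane, together with a check that the left/right relation defined via the orientation of topological triangles behaves consistently across all three types. The remaining difficulty is purely clerical, namely keeping track of the floor functions and the parity-dependent boundary term in the algebraic rewriting.
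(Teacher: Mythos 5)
Your proposal is correct and follows essentially the same route as the paper: your quantity $s(S)$ is exactly the paper's count of \emph{separations} within a $4$-tuple, so your two steps coincide with the paper's facts (i) (a $k$-edge lies in $k(n-2-k)$ separations) and (ii) (a crossing $K_4$ contributes $2$, a planar $K_4$ contributes $3$), followed by the ``elementary computations'' the paper leaves implicit. Your algebraic bookkeeping for the ${\le}{\le}k$-form also checks out (the coefficient difference is the constant $(\lfloor n/2\rfloor-1)^2$ for $n$ even and $\lfloor n/2\rfloor\,(\lfloor n/2\rfloor-1)$ for $n$ odd, with the parity term absorbing the halving edges), so the proof is complete.
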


Lemma~\ref{lemma1} generalizes the relation found by \'{A}brego and Fern\'{a}ndez-Merchant\mycite{abre05_lower_rect}. We further generalize it to semisimple drawings of $K_n$ where ${\rm cr}(D)$ is replaced by $\text{ocr}(D)$, which counts the number of pairs of edges that cross an odd number of times in $D$.

\begin{lemma}
\label{lemma2}
For every semisimple drawing $D$ of $K_n$ we have
\[
{\rm ocr}(D) = \;2 \sum_{k=0}^{\lfloor n/2 \rfloor-2} {E_{{\le}{\le}k}(D)}-\frac{1}{2}{n \choose 2}\left\lfloor\frac{n-2}{2}\right\rfloor -\frac{1}{2}\left(1+(-1)^n\right)E_{{\le}{\le}\lfloor n/2 \rfloor-2}(D).
\]
\end{lemma}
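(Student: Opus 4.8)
My plan is to reduce the claimed global identity to a purely local statement about $4$-tuples and then to settle that local statement by a parity argument. First I would observe that both sides decompose over $4$-element vertex subsets. On the left, every pair of independent edges crossing an odd number of times has four distinct endpoints and crosses the same number of times in the sub-drawing it induces, so
\[
\mathrm{ocr}(D)=\sum_{S\in\binom{[n]}{4}}\mathrm{ocr}(D[S]),
\]
where $D[S]$ is the induced drawing of $K_4$. On the right, since every $k$-edge is also an $(n-2-k)$-edge, the numbers $E_k(D)$ satisfy the same symmetry used in Lemma~\ref{lemma1}; hence the algebraic passage between the two displayed expressions in Lemma~\ref{lemma1} — which uses only \eqref{eq:kkE} and this symmetry, and not the fact that $D$ is simple — applies verbatim. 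It therefore suffices to prove the first-form identity
\[
\mathrm{ocr}(D)=3\binom{n}{4}-\sum_{k=0}^{\lfloor n/2\rfloor-1}k(n-2-k)E_k(D).
\]

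Next I would rewrite the remaining sum over $4$-tuples as well. Writing $\ell(e)$ and $r(e)$ for the numbers of vertices on the two sides of an edge $e$, we have $\sum_k k(n-2-k)E_k(D)=\sum_e \ell(e)r(e)$, and each summand $\ell(e)r(e)$ counts the pairs $\{w,x\}$ of vertices lying on opposite sides of $e$. Grouping by the $4$-tuple $S=\{u,v,w,x\}$ spanned by $e=uv$ together with such a pair gives $\sum_S f(S)$, where $f(S)$ is the number of edges of $K_4$ on $S$ whose two non-endpoints lie on opposite sides. Since $3\binom{n}{4}=\sum_S 3$, the whole identity follows once I establish the local claim
\[
\mathrm{ocr}(D[S])=3-f(S)\qquad\text{for every }S\in\binom{[n]}{4}.
\]

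The heart of the argument, and the step I expect to be the main obstacle, is this local claim for semisimple drawings of $K_4$. For simple drawings it is exactly the $K_4$ case analysis behind Lemma~\ref{lemma1}, where $\mathrm{cr}(D[S])\in\{0,1\}$ is read off from the orientations of the four triangles on $S$. For semisimple drawings independent edges may cross several times, so I would show that the \emph{parity} of the crossing number of each independent pair is still governed by this same orientation (side) data. Concretely, for the pair $\{ab,cd\}$ consider the triangle $abc$: as $D$ is semisimple its three edges are pairwise non-crossing, so it bounds a simple closed curve, and $cd$, being adjacent to $ca$ and $cb$, can meet this curve only along $ab$. Thus $\mathrm{cr}(ab,cd)$ equals the number of times the arc $cd$ crosses this Jordan curve, whose parity is determined by whether the initial direction of $cd$ at $c$ points inside or outside the triangle (read from the rotation at $c$) and by whether $d$ lies inside or outside it — both encoded in the triangle orientations on $S$. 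Hence each of the three independent crossing parities is the same explicit function of the orientation data as the actual $0/1$ crossing in the simple case.

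Finally I would verify $\mathrm{ocr}(D[S])=3-f(S)$ by a finite check over the possible orientation patterns of $S$. Since $\mathrm{ocr}(D[S])$ is the sum of the three determined parities and $f(S)$ is a function of the same orientation data, and since every side-data pattern occurring in a semisimple $K_4$ also occurs in a simple $K_4$ (for which Lemma~\ref{lemma1} already gives $\mathrm{cr}(D[S])=\mathrm{ocr}(D[S])=3-f(S)$), the identity holds by the very same computation as for the three simple drawings of $K_4$. Summing over all $S$ then yields the first-form identity, and the algebraic rewriting inherited from Lemma~\ref{lemma1} gives the statement. The only genuinely new ingredient compared with the simple case is the parity argument of the previous paragraph, which lets multiple crossings between independent edges be discarded modulo $2$; the remainder is bookkeeping parallel to \cite{abre12_2page}.
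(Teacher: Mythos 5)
Your global reduction is sound, and it is in fact the same one the paper uses: decomposing $\mathrm{ocr}(D)$ over $4$-element vertex subsets and rewriting $\sum_k k(n-2-k)E_k(D)$ as $\sum_S f(S)$ (note that your $f(S)$ equals $E_1(D[S])$, i.e.\ the number of the paper's ``separations'' inside $S$) reduces the lemma to the local identity $\mathrm{ocr}(D[S])+E_1(D[S])=3$ for semisimple drawings of $K_4$; this is precisely the paper's facts (i) and (ii), and the algebraic passage between the two forms indeed uses only \eqref{eq:kkE} and the $k\leftrightarrow(n-2-k)$ symmetry. Your Jordan-curve observation is also correct and genuinely uses semisimplicity: since adjacent edges do not cross, $abc$ is a simple closed curve, $cd$ can meet it only along $ab$, and the parity of $\mathrm{cr}(ab,cd)$ is governed by whether $cd$ leaves $c$ into the interior of that curve and whether $d$ lies in its interior.

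The gap is the next step, where you declare these two indicators to be ``encoded in the triangle orientations on $S$,'' together with the companion assertion that every orientation pattern arising from a semisimple $K_4$ also arises from a simple $K_4$. Neither is proved, and together they constitute the entire content of the lemma. The rotation at $c$ and the inside/outside position of $d$ relative to the curve $abc$ are data beyond the four triangle orientations, and showing that the orientations determine them (for semisimple drawings) is not easier than the statement you are proving. Moreover, without the second assertion your ``finite check'' is circular: it can only be run on patterns already known to come from simple drawings, and excluding the remaining patterns---for instance the one in which all six edges are $1$-edges, so $f(S)=6$ and the identity would read $\mathrm{ocr}(D[S])=-3$---is exactly the hard part. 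That this exclusion cannot be waved through is shown by Figure~\ref{fig_weakly_shell_K4_ocr2} (right): once adjacent edges may cross an even number of times, that pattern \emph{is} realizable, with $\mathrm{ocr}=3$, and the identity fails. The paper closes this gap with its Claim: a semisimple drawing of $K_4$ has at most one pair of edges crossing oddly; if $\mathrm{ocr}=0$ the drawing is in fact crossing-free (three separations), and if $\mathrm{ocr}=1$ the $1$-edges form a perfect matching (two separations). This is established by a face-sharing analysis and an edge-flip argument, and both of your unproven assertions would follow from it---but as written, your proposal assumes what is to be proved.
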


We recall that a {\em face\/} of a drawing $D$ in the plane is a connected component of the complement of all the edges and vertices of $D$ in $\mathbb{R}^2$. The {\em outer face\/} of $D$ is the unbounded face of $D$.

\begin{proof}[Proof (sketch)]
We just sketch the main idea, which is common with the proof of Lemma~\ref{lemma1}, and then explain the generalization to semisimple drawings. For the details, we refer the reader
to~\cite[Theorem 1 and Proposition 1]{abre12_2page}.

Let $D$ be a semisimple drawing of $K_n$. A {\em separation\/} in $D$ is an unordered triple $\{ab,c,d\}$, where $ab$ is an edge of $D$, $c,d$ are vertices of $D$ distinct from $a,b$, and the orientations of the two triangles $abc$ and $abd$ are opposite. Observe that $\{ab,c,d\}$ is a separation in $D$ if and only if $ab$ is a $1$-edge (and also a {\em halving edge}) in the complete subgraph of $D$ induced by the vertices $a,b,c,d$.
The total number of separations in $D$ relates to both the crossing number and the numbers of $k$-edges in the following way.

\begin{enumerate}[(i)]
\item Every $k$-edge belongs to exactly $k(n-k-2)$ separations. 
\item Every $4$-tuple of vertices inducing a crossing contributes two separations, and every $4$-tuple of vertices inducing a planar drawing of $K_4$ contributes three separations. In particular, 
for every complete subgraph $D$ with $4$ vertices we have the equality ${\rm cr}(D)+E_1(D)=3$.
\end{enumerate}

Fact (i) is a direct consequence of the definitions. Fact (ii) is easily seen by inspecting all three homeomorphism classes of simple drawings of $K_4$ in the plane: there is one class with no crossing, and two classes with one crossing, which would form just one class on the sphere; see Figure~\ref{fig_k4ky}.
Lemma~\ref{lemma1} follows from the facts (i) and (ii) by elementary computations.

\begin{figure}
 \begin{center}
   \includegraphics[scale=1]{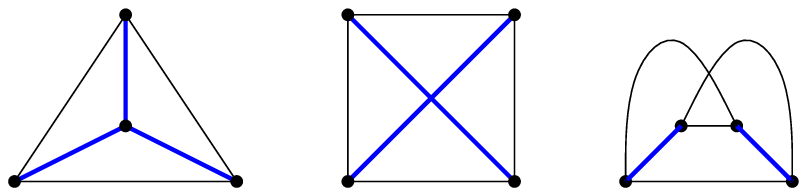}
  \caption{The three homeomorphism classes of simple drawings of $K_4$. The fat edges are $1$-edges.}
 \label{fig_k4ky}
 \end{center}
\end{figure}

To generalize Lemma~\ref{lemma1} to semisimple drawings, we observe that semisimple drawings of $K_4$ can be classified analogously as the simple drawings of $K_4$. In particular, the following claim implies that the equality ${\rm ocr}(D)+E_1(D)=3$ is still satisfied for every semisimple drawing $D$ of $K_4$.

\begin{claim_}
A semisimple drawing $D$ of $K_4$ has at most one pair of edges crossing an odd number of times. Moreover, $D$ has three separations if ${\rm ocr}(D)=0$ and two separations if ${\rm ocr}(D)=1$.
\end{claim_}

In the rest of the proof we prove the claim. 
Let $D$ be a semisimple drawing of $K_4$. Suppose that ${\rm ocr}(D)=0$. Let $abc$ be a triangle in $D$ and let $d$ be the fourth vertex of $D$. See Figure~\ref{fig_k4_semisimple}, left. If the edge $da$ crosses $bc$, then either $d$ and $b$ share no face in the drawing of the subgraph with edges $ab,bc,ad$, or $d$ and $c$ share no face in the drawing of the subgraph with edges $ac,bc,ad$. This means that one of the edges $bd$ or $cd$ either crosses an adjacent edge or crosses another edge an odd number of times. Therefore, the edge $da$ has no crossing with the triangle $abc$. Analogous argument for the edges $db$ and $dc$ shows that $D$ has no crossings at all. In particular, $D$ has three separations; see Figure~\ref{fig_k4ky}, left.

Now suppose that ${\rm ocr}(D)\ge1$ and let $ac$ and $bd$ be two edges that cross an odd number of times. Since all the other edges are adjacent to both $ac$ and $bd$, the vertices $a,b,c,d$ share a common face $F$ in the drawing of the subgraph with edges $ac,bd$. Moreover, the cyclic order of the vertices along the boundary of $F$ is $a,b,c,d$, either clockwise or counter-clockwise. See Figure~\ref{fig_k4_semisimple}, right. 

\begin{figure}
 \begin{center}
   \includegraphics[scale=1]{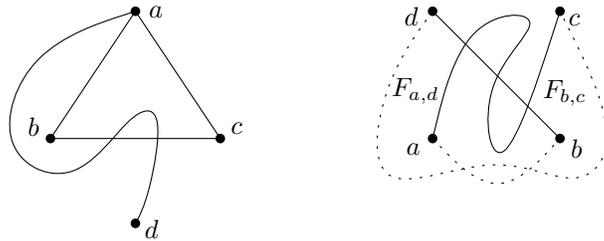}
  \caption{Illustration to the proof of Lemma~\ref{lemma2}.}
 \label{fig_k4_semisimple}
 \end{center}
\end{figure}

We show that at most one more pair of edges can cross, either $ab$ and $cd$, or $ad$ and $bc$, but only an even number of times. For example, in the drawing of the subgraph with edges $ac$, $bd$, $ab$, the vertices $c$ and $d$ belong to the same face, and the edge $cd$ is allowed to cross only the edge $ab$, each time switching faces. If $ab$ and $cd$ cross, then $a$ and $d$ share a unique face $F_{a,d}$ in the drawing of the graph $K$ with edges $ac,bd,ab,cd$, and $c$ and $b$ share a unique face $F_{b,c}$ different from $F_{a,d}$. Since the edges $ad$ and $bc$ are adjacent to all edges of $K$, the edge $ad$ lies completely in $F_{a,d}$, the edge $bc$ lies completely in $F_{b,c}$ and thus $ad$ and $bc$ cannot cross. A symmetric argument shows that if $ab$ and $cd$ are disjoint, then $ad$ and $bc$ are either disjoint or cross an even number of times.
In any case, we have ${\rm ocr}(D)\le 1$ (and the pair crossing number of $D$ is at most $2$).

It remains to show that every semisimple drawing $D$ of $K_4$ with ${\rm ocr}(D)=1$ has exactly two $1$-edges. More precisely, we show that the two $1$-edges always form a perfect matching.

Let $e$ be an edge in $D$ incident with the outer face. An {\em edge flip\/} is an operation where the portion of $e$ incident with the outer face is redrawn along the other side of the drawing; see Figure~\ref{fig_edge_flip}. For drawings on the sphere, the edge flip is just a homeomorphism of the sphere. For every bounded face $F$ of $D$, there is a sequence of edge flips that makes $F$ the outer face.

\begin{figure}
 \begin{center}
   \includegraphics[scale=1]{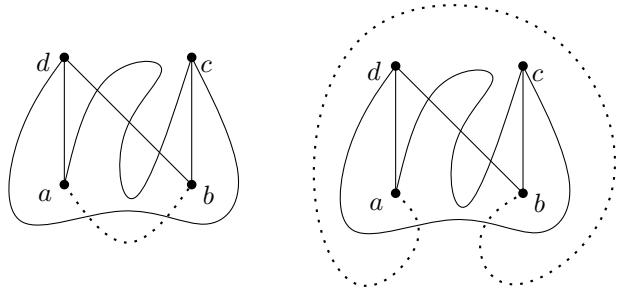}
  \caption{An edge flip of $ab$.}
 \label{fig_edge_flip}
 \end{center}
\end{figure}

If $D$ is a semisimple drawing of $K_4$, then every edge flip of an edge $e$ changes the orientation of the two triangles adjacent to $e$. Consequently, exactly the four edges adjacent to $e$, forming a $4$-cycle, change from $1$-edges to $0$-edges or vice versa. Also observe that the edge flip of $e$ can be performed only if $e$ is a $0$-edge. It follows that $1$-edges form a perfect matching in $D$ if and only if they form a perfect matching in the drawing obtained by the edge flip.

Let $D$ be a semisimple drawing of $K_4$ with ${\rm ocr}(D)=1$. Let $ac$ and $bd$ be the two edges that cross an odd number of times. By performing edge flips, we may assume that all the vertices are adjacent to the outer face of the drawing of the subgraph $H$ with edges $ac$ and $bd$. Each edge $e$ of the remaining four edges can be drawn in two essentially different ways with respect to $H$, which differ just by an edge flip of $e$ in $H+e$; see Figure~\ref{fig_edge_flip}. In total, there are $16$ possible combinations. We cannot, however, assume any particular combination, since not all edge flips are always available.
Observe that the orientations of all triangles are determined by the four binary choices for the edges $ab,bc,cd,ad$. Also, changing the choice for one edge $e$ has the same effect on the orientations of the triangles as the edge flip of $e$. For one particular choice, for example the one yielding the middle drawing in Figure~\ref{fig_k4ky}, the $1$-edges form a perfect matching. Changing the choice for a subset of edges yields either a perfect matching of $1$-edges or a complete graph of $1$-edges. However, the latter option is excluded by the fact that in every semisimple drawing the edges incident with the outer face are $0$-edges. This finishes the proof of the claim and the lemma. 
%
\end{proof}

Considering ${\le}k$-edges, \'{A}brego and Fern\'{a}ndez-Merchant\mycite{abre05_lower_rect} and Lov\'{a}sz {\it et al.}\mycite{LVWW04_quadrilaterals} proved that for rectilinear drawings of $K_n$, the inequality $E_{{\le}k} \ge 3{{k+2}\choose{2}}$ together with (\ref{eq:crossings}) gives $\mathrm{\overline{cr}}(G) \ge Z(n)$. However, there are simple $x$-monotone (even $2$-page) drawings of $K_n$ where $E_{{\le}k} < 3{{k+2}\choose{2}}$ for $k=1$\mydotl\mycite{abre12_2page}\mydotr \'{A}brego {\it et al.}\mycite{abre12_2page}
showed that the inequality $E_{{\le}{\le}k} \ge 3{{k+3}\choose{3}}$, which is implied by inequalities $E_{{\le}j} \ge 3{{j+2}\choose{2}}$ for $j\le k$, is satisfied by all $2$-page book drawings. We show that the same inequality is satisfied by all $x$-monotone semisimple drawings of $K_n$.

Let $\{v_1,v_2,\ldots,v_n\}$ be the vertex set of $K_n$.
Note that we can assume that all vertices in an $x$-monotone drawing lie on the $x$-axis.
We also assume that the $x$-coordinates of the vertices satisfy $x(v_1)<x(v_2)<\cdots<x(v_n)$.

The following observation describes the structure of $k$-edges incident to vertices on the outer face in semisimple drawings of complete graphs. See Figure~\ref{fig_outer_vertex}, left.

\begin{observation}\label{obs_outer}
Let $D$ be a semisimple drawing of $K_n$, not necessarily $x$-mono\-to\-ne. Let $v$ be a vertex incident to the outer face of $D$ and let $\gamma_i$ be the $i$th edge incident to $v$ in the counter-clockwise order so that $\gamma_1$ and $\gamma_{n-1}$ are incident to the outer face in a small neighborhood of $v$. Let $v_{k_i}$ be the other endpoint of $\gamma_i$. Then for every $i,j$, $1\le i<j \le n-1$, the triangle $v_{k_i}vv_{k_j}$ is oriented clockwise. Consequently, for every $k$ with $1\le k\le (n-1)/2$, the edges $\gamma_k$ and $\gamma_{n-k}$ are $(k-1)$-edges.
\end{observation}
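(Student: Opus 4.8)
The plan is to prove the two assertions of the observation in turn: first the orientation statement for all pairs $\gamma_i,\gamma_j$, and then to read off the values $k-1$ from the resulting left/right counts. Throughout I use that in a semisimple drawing the three edges of any triangle $xyz$ are pairwise adjacent, hence pairwise noncrossing, so that $xyz$ is a genuine topological simple closed curve and its orientation, as defined just before the observation, is well defined.

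Fix $i<j$ and consider the triangle $T$ bounded by the three edges $\gamma_i=vv_{k_i}$, $\gamma_j=vv_{k_j}$ and $v_{k_i}v_{k_j}$. Since $T$ is a simple closed curve, near $v$ its interior is exactly one of the two angular sectors cut out by $\gamma_i$ and $\gamma_j$, and I would pin down which one as follows. The edges $\gamma_1,\dots,\gamma_{n-1}$ leave $v$ in this counter-clockwise order, and because $v$ is incident to the outer face with $\gamma_1,\gamma_{n-1}$ bounding it, the direction of the outer face at $v$ lies in the single angular gap between $\gamma_{n-1}$ and $\gamma_1$; in particular it does not lie in the counter-clockwise sector $S$ from $\gamma_i$ to $\gamma_j$, since that sector only meets the gaps spanned by $\gamma_i,\gamma_{i+1},\dots,\gamma_j$. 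As the interior of $T$ is bounded it cannot contain the unbounded outer-face direction, so the interior of $T$ near $v$ must be $S$. Tracing the boundary as $v_{k_i}\to v\to v_{k_j}\to v_{k_i}$ we enter $v$ along $\gamma_i$ and leave along $\gamma_j$, with the interior $S$ on the right-hand side of this passage; hence $T$ is traversed clockwise. The underlying local computation is exactly the rectilinear one: if $\gamma_i$ points East and $\gamma_j$ North, then $S$ is the first quadrant, which lies to the right both just before and just after $v$. This establishes that $v_{k_i}vv_{k_j}$ is oriented clockwise for every $i<j$.

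For the consequence I would use only the orientation identities $\mathrm{orient}(a,b,c)=\mathrm{orient}(b,c,a)$ and $\mathrm{orient}(a,b,c)=-\mathrm{orient}(a,c,b)$, together with the definition of the left/right side of an oriented edge. Orient $\gamma_k$ from $v$ to $v_{k_k}$; a vertex $v_{k_j}$ lies on the left (right) of $\gamma_k$ exactly when the triangle $v\,v_{k_k}\,v_{k_j}$ is counter-clockwise (clockwise). If $j<k$ then $\mathrm{orient}(v,v_{k_k},v_{k_j})=\mathrm{orient}(v_{k_j},v,v_{k_k})$, which is clockwise by the first part applied to the pair $j<k$, so $v_{k_j}$ is on the right; if $j>k$ then $\mathrm{orient}(v,v_{k_k},v_{k_j})=-\mathrm{orient}(v_{k_k},v,v_{k_j})$, which is counter-clockwise by the first part applied to $k<j$, so $v_{k_j}$ is on the left. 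Thus $\gamma_k$ has exactly $k-1$ vertices on its right and $n-1-k$ on its left. The identical computation applied to $\gamma_{n-k}$ yields $n-k-1$ vertices on the right and $k-1$ on the left. In both cases the hypothesis $k\le (n-1)/2$ forces $k-1\le n-1-k$, so the smaller side has size $k-1$, and therefore both $\gamma_k$ and $\gamma_{n-k}$ are $(k-1)$-edges.

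The step I expect to require the most care is the purely topological determination of the interior sector of $T$ at $v$: unlike in the rectilinear case one cannot appeal to angles directly, and the argument must rest solely on the combinatorial rotation at $v$, namely the counter-clockwise order $\gamma_1,\dots,\gamma_{n-1}$, and on the fact that the bounded region $T$ cannot contain the unbounded outer face. Once the clockwise orientation is secured, everything else is routine bookkeeping with the alternating sign of $\mathrm{orient}$ and the inequality $k-1\le n-1-k$.
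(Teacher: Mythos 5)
Your proof is correct: the paper states this as an unproved observation (justified only by a reference to Figure~\ref{fig_outer_vertex}), and your argument supplies exactly the intended reasoning --- the triangle $v_{k_i}vv_{k_j}$ is a simple closed curve by semisimplicity, its bounded interior cannot meet the connected unbounded outer face, so its interior at $v$ must be the counter-clockwise sector from $\gamma_i$ to $\gamma_j$, which forces the clockwise orientation. The subsequent side-counting via the cyclic and antisymmetric identities of the orientation function is also correct, so nothing is missing.
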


\begin{figure}
 \begin{center}
   \includegraphics[scale=1]{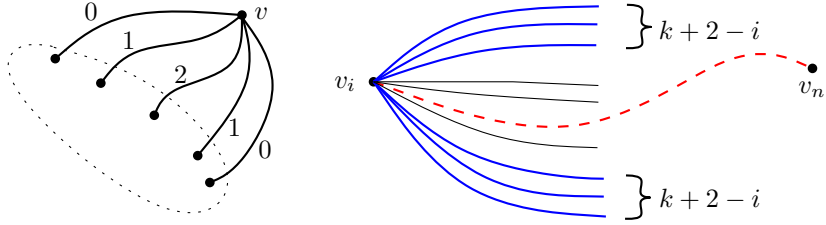}
  \caption{Left: $k$-edges incident with a vertex on the outer face. Right: After removing $v_n$, at least $k+2-i$ right edges at $v_i$ are invariant ${\le}k$-edges.}
 \label{fig_outer_vertex}
 \end{center}
\end{figure}

For an $x$-monotone drawing $D$ of $K_n$, we use Observation~\ref{obs_outer} for the vertex $v_n$ and the drawing $D$ and then for each $i$, for the vertex $v_i$ and the drawing of the subgraph induced by $v_i, v_{i+1}, \dots, v_n$.

The following definitions were introduced by \'{A}brego {\it et al.}\mycite{abre12_2page} for $2$-page book drawings. Let $D$ be a semisimple $x$-monotone drawing of $K_n$ and let $D'$ be the drawing obtained from $D$ by deleting the vertex $v_n$ together with its adjacent edges. A $k$-edge in $D$ is a {\em $(D,D')$-invariant $k$-edge\/} if it is also a $k$-edge in $D'$. It is easy to see that every ${\le}k$-edge in $D'$ is also a ${\le}(k+1)$-edge in $D$.
If $0\le j \le k \le \lfloor n/2 \rfloor - 1$, then a $(D,D')$-invariant $j$-edge  is called a {\em $(D,D')$-invariant ${\le}k$-edge}. Let $E_{{\le}k}(D,D')$ denote the number of $(D,D')$-invariant ${\le}k$-edges.

For $i<j$, the edge $v_iv_j$ is called a {\em right edge at $v_i$}. The right edges at $v_i$ have a natural vertical order, which coincides with the order of their crossings with an arbitrary vertical line separating $v_i$ and $v_{i+1}$. The set of $j$ {\em topmost\/} ({\em bottommost}) right edges at $v_i$ is the set of $j$ right edges at $v_i$ that are above (below, respectively) all other right edges at $v_i$ in their vertical order.

\begin{lemma}\label{lemma_o_vnitrnich_vrcholech}
Let $D$ be a semisimple $x$-monotone drawing of $K_n$ and let $k$ be a fixed integer such that $0\le k \le (n-3)/2$. For every $i\in\{1,2,\dots,k+1\}$, the ${k+2-i}$ bottommost and the ${k+2-i}$ topmost right edges at $v_i$ are $\le k$-edges in $D$.
Moreover, at least ${k+2-i}$ of these $\le k$-edges are $(D,D')$-invariant ${\le}k$-edges.
\end{lemma}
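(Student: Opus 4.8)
The plan is to reduce everything to Observation~\ref{obs_outer} applied inside the subdrawings $D_i$ induced by $v_i,v_{i+1},\dots,v_n$, and then to track how the relevant quantities change when we pass from $D_i$ to $D$ (inserting the vertices $v_1,\dots,v_{i-1}$, all to the left) and from $D$ to $D'$ (deleting $v_n$). First I would note that in $D_i$ the vertex $v_i$ is leftmost, so the open half-plane $\{x<x(v_i)\}$ is free of the drawing and $v_i$ is incident to the outer face of $D_i$; hence Observation~\ref{obs_outer} applies. Writing $m=n-i+1$ for the number of vertices of $D_i$ and listing the right edges at $v_i$ in the counter-clockwise order of the observation as $\gamma_1,\dots,\gamma_{m-1}$ (so $\gamma_1$ is the bottommost and $\gamma_{m-1}$ the topmost), the $t$-th bottommost edge $\gamma_t$ and the $t$-th topmost edge $\gamma_{m-t}$ are $(t-1)$-edges of $D_i$, and for $p<q$ the triangle $v_{k_p}v_iv_{k_q}$ is oriented clockwise, where $v_{k_a}$ is the endpoint of $\gamma_a$ distinct from $v_i$.

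For the first assertion I would use the elementary fact that inserting one vertex changes the size of the smaller side of an edge by at most one, so adding $v_1,\dots,v_{i-1}$ turns an $a$-edge of $D_i$ into a $\le(a+i-1)$-edge of $D$. Since the $t$-th bottommost (topmost) right edge at $v_i$ is a $(t-1)$-edge of $D_i$, it becomes a $\le(t+i-2)$-edge of $D$, which is $\le k$ precisely when $t\le k+2-i=:s$. This shows that the $s$ bottommost and the $s$ topmost right edges at $v_i$ are $\le k$-edges of $D$.

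For the invariance part the two ingredients are the side of $v_n$ relative to each edge and the identification of the \emph{strictly} larger side of each edge in $D$. Writing $\gamma_c=v_iv_n$, a short computation with the triangle orientations from Observation~\ref{obs_outer} (which are intrinsic to triples and therefore survive in $D$) shows that $v_n$ lies above $\gamma_a$ exactly when $a<c$ and below $\gamma_a$ exactly when $a>c$. Independently I would bound the two side-sizes of $\gamma_a$ in $D$: for a bottommost edge ($a\le s$) the top side exceeds the bottom side by at least $(m-2a)-(i-1)=n-2i-2a+2\ge n-2k-2\ge1$, uniformly over every split of the left vertices, so the top is the strictly larger side; symmetrically the bottom is strictly larger for a topmost edge ($a\ge m-s$). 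Now an edge $v_iv_j$ with $j<n$ is $(D,D')$-invariant precisely when $v_n$ sits on its strictly larger side, because deleting $v_n$ from the smaller or from a balanced side would lower the $k$-value, and none of our $\le k$-edges is balanced since $k<(n-2)/2$. Hence a bottommost edge is invariant iff $a<c$, and a topmost edge is invariant iff $a>c$.

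Finally I would count: the invariant bottommost edges number $\min(s,c-1)$ and the invariant topmost edges number $\min(s,m-1-c)$, and these two families are disjoint, since one has $a<c$ and the other $a>c$. If either minimum equals $s$ we are done; otherwise the sum is $m-2=n-i-1\ge s$. In all cases we obtain at least $s=k+2-i$ edges among the $2s$ in question that are $(D,D')$-invariant, and being $\le k$-edges of $D$ they are $(D,D')$-invariant $\le k$-edges. The step requiring the most care is this invariance analysis: one must combine the purely order-theoretic location of $v_n$ with a size estimate valid uniformly over the unknown distribution of $v_1,\dots,v_{i-1}$, and it is exactly the hypothesis $k\le(n-3)/2$ that forces the relevant side to be strictly larger.
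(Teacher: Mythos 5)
Your proof is correct and takes essentially the same approach as the paper: the first part is reduced to Observation~\ref{obs_outer} applied to the subdrawing induced by $v_i,\dots,v_n$, and invariance under deletion of $v_n$ is decided by which side of each edge the vertex $v_n$ lies on. The only difference is cosmetic: the paper observes that either all $k+2-i$ bottommost or all $k+2-i$ topmost right edges at $v_i$ are invariant (according to whether $v_iv_n$ is among the topmost ones), whereas you count invariant edges from both families via $\min(s,c-1)+\min(s,m-1-c)\ge s$ with $s=k+2-i$; both counts close the argument.
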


\begin{proof}
See Figure~\ref{fig_outer_vertex}, right. The first part of the lemma follows directly from Observation~\ref{obs_outer}. If the edge $v_iv_n$ is one of the ${k+2-i}$ topmost right edges at $v_i$, then the ${k+2-i}$ bottommost right edges at $v_i$ are $(D,D')$-invariant ${\le}k$-edges. Otherwise the ${k+2-i}$ topmost right edges at $v_i$ are $(D,D')$-invariant ${\le}k$-edges. 

\end{proof}

\begin{corollary}\label{cor_invariant}
We have
\[E_{{\le}k}(D,D') \ge \sum_{i=1}^{k+1}(k+2-i) = {{k+2}\choose{2}}.\]
\vskip -0.6cm
\end{corollary}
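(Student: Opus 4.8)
The plan is to derive Corollary~\ref{cor_invariant} directly from Lemma~\ref{lemma_o_vnitrnich_vrcholech} by summing the per-vertex lower bounds, and then evaluating the resulting arithmetic sum in closed form. Lemma~\ref{lemma_o_vnitrnich_vrcholech} already supplies, for each fixed $k$ with $0 \le k \le (n-3)/2$ and each index $i \in \{1, 2, \dots, k+1\}$, a guaranteed collection of at least $k+2-i$ distinct $(D,D')$-invariant ${\le}k$-edges, namely either the $k+2-i$ bottommost or the $k+2-i$ topmost right edges at $v_i$. So essentially all the work has been front-loaded into the lemma, and the corollary is the bookkeeping step that aggregates these guarantees.

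First I would verify that the edges counted at different vertices $v_i$ are genuinely distinct, so that summing the counts does not double-count any edge. This is where I expect the one real subtlety to lie, even though it is minor. The edges guaranteed at $v_i$ are all \emph{right edges at $v_i$}, i.e.\ edges $v_i v_j$ with $i < j$, and each edge $v_i v_j$ is a right edge at exactly one vertex, namely its left endpoint $v_i$. Hence the sets of right edges at distinct vertices $v_1, \dots, v_{k+1}$ are pairwise disjoint, and the invariant ${\le}k$-edges produced by the lemma for different values of $i$ are automatically distinct. Therefore the total number of $(D,D')$-invariant ${\le}k$-edges is at least the sum over $i$ of the per-vertex guarantees:
\[
E_{{\le}k}(D,D') \;\ge\; \sum_{i=1}^{k+1}(k+2-i).
\]

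It then remains to evaluate this sum in closed form. Reindexing via $m = k+2-i$, as $i$ runs over $\{1, 2, \dots, k+1\}$ the value $m$ runs over $\{k+1, k, \dots, 1\}$, so the sum equals $\sum_{m=1}^{k+1} m = \binom{k+2}{2}$, which is exactly the claimed bound. The main obstacle, to the extent there is one, is purely expository: one must make sure the index range $i \in \{1, \dots, k+1\}$ in the lemma matches the range over which we are entitled to invoke it (which it does, since the lemma is stated for precisely these $i$), and that the disjointness observation above is recorded so the summation is justified. No further case analysis, no appeal to the drawing being $x$-monotone beyond what the lemma already used, and no edge-flip or separation machinery is needed here; the corollary is a clean consequence of the lemma plus the elementary identity for the triangular sum.
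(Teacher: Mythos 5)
Your proposal is correct and matches the paper's (implicit) argument exactly: the corollary is stated as an immediate consequence of Lemma~\ref{lemma_o_vnitrnich_vrcholech}, obtained by summing the per-vertex guarantees of $k+2-i$ invariant ${\le}k$-edges over $i=1,\dots,k+1$. Your added observation that right edges at distinct vertices $v_i$ are pairwise disjoint (each edge being a right edge only at its left endpoint) is precisely the point that justifies the summation, and the triangular-sum evaluation is routine.
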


The following theorem gives a lower bound on the number of ${\le}{\le}k$-edges. The proof is essentially the same as in\mycite{abre12_2page}, we only extracted Lemma~\ref{lemma_o_vnitrnich_vrcholech}, which needed to be generalized. Together with Lemma~\ref{lemma2}, Theorem~\ref{thm2} yields the second and the third equality in Theorem~\ref{theorem_1}, 
by the same computation as in\mydotl\mycite{abre12_2page}\mydotr

\begin{theorem}
\label{thm2}
Let $n \ge 3$ and let $D$ be a semisimple $x$-monotone drawing of $K_n$. Then for every $k$ satisfying $0 \le k < n/2 - 1$, we have
$E_{{\le}{\le}k}(D) \ge 3{{k+3} \choose {3}}.$
\end{theorem}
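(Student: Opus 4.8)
The plan is to prove the bound $E_{{\le}{\le}k}(D) \ge 3\binom{k+3}{3}$ by induction on $n$, exploiting the recursive structure exposed by Corollary~\ref{cor_invariant}. The key idea is to relate $E_{{\le}{\le}k}(D)$ for an $n$-vertex drawing $D$ to $E_{{\le}{\le}k}(D')$ for the $(n-1)$-vertex drawing $D'$ obtained by deleting $v_n$, and to account for the difference using the number of $(D,D')$-invariant ${\le}k$-edges. First I would record the base case: for $n$ small relative to $k$ (specifically when $k$ is forced to be $0$, i.e.\ $k < n/2-1$ with $n=3$ or $n=4$), the inequality can be checked directly, since then $3\binom{k+3}{3} = 3$ and any semisimple drawing of $K_3$ or $K_4$ trivially has enough $\le 0$-edges on the outer face.

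For the inductive step, the central accounting is the following. Every $(D,D')$-invariant $j$-edge (with $j\le k$) is counted in $E_{{\le}{\le}k}(D)$, and moreover the passage from $D'$ to $D$ shifts edge indices in a controlled way: since every ${\le}j$-edge in $D'$ becomes a ${\le}(j+1)$-edge in $D$, a $\le k$-edge of $D'$ contributes to the $\le\le k$ count of $D$ as well. I would make precise the identity (or inequality) expressing $E_{{\le}{\le}k}(D)$ as $E_{{\le}{\le}k}(D')$ plus a correction term that counts, across all the threshold levels $0,1,\dots,k$, the invariant ${\le}k$-edges newly contributed at $v_n$'s deletion. The clean way to organize this is to sum Corollary~\ref{cor_invariant} over the relevant range: summing the bound $E_{{\le}j}(D,D') \ge \binom{j+2}{2}$ for $j$ from $0$ to $k$ gives a contribution of $\sum_{j=0}^{k}\binom{j+2}{2} = \binom{k+3}{3}$ invariant ${\le}k$-edges at the top level, and the factor of $3$ will emerge from applying the argument symmetrically (top and bottom edges at each $v_i$, together with the induction hypothesis).

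Concretely, I expect the telescoping to take the form
\[
E_{{\le}{\le}k}(D) \ge E_{{\le}{\le}k}(D') + 3\binom{k+2}{2} \ge 3\binom{k+3}{3},
\]
where the first inequality isolates the gain from the invariant edges incident to the deleted vertex (this is exactly where Lemma~\ref{lemma_o_vnitrnich_vrcholech} and Corollary~\ref{cor_invariant} enter), the inductive hypothesis $E_{{\le}{\le}k}(D') \ge 3\binom{k+2}{3}$ supplies the second, and the Pascal-type identity $\binom{k+3}{3} = \binom{k+2}{3} + \binom{k+2}{2}$ closes the computation with the right constant $3$. I would verify along the way that the range condition $0 \le k < n/2 - 1$ is preserved when passing to $D'$ (so that the hypothesis applies with the same $k$), which requires checking $k < (n-1)/2 - 1$ is not needed—rather that the invariant edges are genuinely ${\le}k$-edges in $D$, as guaranteed by the lemma.

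The main obstacle I anticipate is making the correction term rigorous: Lemma~\ref{lemma_o_vnitrnich_vrcholech} guarantees $(D,D')$-invariant ${\le}k$-edges at each vertex $v_i$, but one must ensure these edges are \emph{not double-counted} against the edges already tallied in $E_{{\le}{\le}k}(D')$, and that the index shift between $D$ and $D'$ is handled consistently across all levels $j \le k$ rather than just at a single threshold. In other words, the delicate part is bookkeeping the weighted sum in the definition~\eqref{eq:kkE} of $E_{{\le}{\le}k}$ so that the invariant edges contribute with the correct multiplicity $(k+1-i)$ at each level, and confirming that the total gain is at least $3\binom{k+2}{2}$ rather than merely $\binom{k+3}{3}$. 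Resolving this cleanly is precisely why Lemma~\ref{lemma_o_vnitrnich_vrcholech} was extracted and generalized; the remaining steps are the elementary binomial manipulations identical to those in~\cite{abre12_2page}.
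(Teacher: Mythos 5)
Your overall strategy (delete $v_n$, use Lemma~\ref{lemma_o_vnitrnich_vrcholech} and Corollary~\ref{cor_invariant} for the invariant edges, close with Pascal's identity) is the same as the paper's, but your central telescoping inequality is false, and the error is structural rather than notational. You claim
\[
E_{{\le}{\le}k}(D) \ \ge\ E_{{\le}{\le}k}(D') + 3\binom{k+2}{2},
\]
with the \emph{same} $k$ on both sides. Classifying the edges of $D$ into those incident to $v_n$ and those of $D'$, and using that every $i$-edge of $D'$ is either an $i$-edge or an $(i+1)$-edge of $D$, the exact identity (this is the paper's key step) is
\[
E_{{\le}{\le}k}(D) \ =\ 2\binom{k+2}{2} + E_{{\le}{\le}k-1}(D') + E_{{\le}k}(D,D'),
\]
where the term $2\binom{k+2}{2}$ comes from the two $i$-edges incident to $v_n$ for each $i\le k$ (Observation~\ref{obs_outer}) --- a contribution your accounting never mentions. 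Rewriting via $E_{{\le}{\le}k}(D') = E_{{\le}{\le}k-1}(D') + E_{{\le}k}(D')$ and using the trivial bound $E_{{\le}k}(D,D') \le E_{{\le}k}(D')$ gives
\[
E_{{\le}{\le}k}(D) \ \le\ E_{{\le}{\le}k}(D') + 2\binom{k+2}{2},
\]
so a gain of $3\binom{k+2}{2}$ over the same-$k$ count of $D'$ is impossible; your first displayed inequality cannot be repaired as stated.

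The fix is that the induction is a \emph{double} induction on $(n,k)$: the hypothesis applied to $D'$ must be taken with parameter $k-1$, giving $E_{{\le}{\le}k-1}(D') \ge 3\binom{k+2}{3}$ --- which is exactly the bound you quote, revealing that your $E_{{\le}{\le}k}(D')$ should have been $E_{{\le}{\le}k-1}(D')$ throughout. Your explicit plan to ``apply the hypothesis with the same $k$'' also fails on the range condition: for odd $n$ and the extreme value $k=(n-3)/2$ one has $k < n/2-1$ but \emph{not} $k < (n-1)/2-1$, so the same-$k$ hypothesis is not even available for $D'$; with parameter $k-1$ the condition $k-1 < (n-1)/2 - 1$ does follow from $k < n/2 - 1$. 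The correct chain is
\[
E_{{\le}{\le}k}(D) \ =\ 2\binom{k+2}{2} + E_{{\le}{\le}k-1}(D') + E_{{\le}k}(D,D') \ \ge\ 2\binom{k+2}{2} + 3\binom{k+2}{3} + \binom{k+2}{2} \ =\ 3\binom{k+3}{3},
\]
using Corollary~\ref{cor_invariant} and $\binom{k+3}{3}=\binom{k+2}{3}+\binom{k+2}{2}$, with base cases $n=3$ and $k=-1$ (setting $E_{{\le}{\le}-1}(D)=0$). Note also that the double-counting worry you raise dissolves in this formulation: each $i$-edge of $D'$ with $i\le k$ contributes $(k-i)$ to $E_{{\le}{\le}k-1}(D')$ and contributes $(k-i)+1$ or $(k-i)$ to $E_{{\le}{\le}k}(D)$ according to whether it is invariant or not, so the correction term $E_{{\le}k}(D,D')$ counts precisely the invariant edges, once each.
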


\begin{proof}
The proof proceeds by induction on $n$ and $k$ starting at $n=3$ and $k=-1$. The case $n=3$ is trivially true, and the case $k=-1$ is taken care of by setting $E_{{\le}{\le}-1}(D)=0$ for every drawing $D$. Let $n \ge 4$ and let $D$ be a semisimple $x$-monotone drawing of $K_n$. For the induction step we remove the point $v_n$ together with its adjacent edges to obtain a drawing $D'$ of $K_{n-1}$, which is also semisimple and $x$-monotone.

Using Observation~\ref{obs_outer} we see that, for $0 \le i \le k < n/2 - 1$, there are two $i$-edges adjacent to $v_n$ in $D$ and together they contribute with
$2\sum_{i=0}^{k}(k+1-i)=2{{k+2} \choose {2}}$
to $E_{{\le}{\le}k}(D)$ by~(\ref{eq:kkE}).

Let $\gamma$ be an $i$-edge in $D'$. If $i\le k$, then $\gamma$ contributes with $(k-i)$ to the sum
\[E_{{\le}{\le}k-1}(D')= \sum_{i=0}^{k-1}(k-i)E_{i}(D').\]
We already observed that $\gamma$ is either an $i$-edge or an $(i+1)$-edge in $D$. If $\gamma$ is also an $i$-edge in $D$ (that is, $\gamma$ is a $(D,D')$-invariant $i$-edge), then it contributes with $(k+1-i)$ to $E_{{\le}{\le}k}(D)$. This is a gain of +1 towards $ E_{{\le}{\le}k-1}(D')$.  If $\gamma$ is an $(i+1)$-edge in $D$, then it contributes only with $(k-i)$ to $E_{{\le}{\le}k}(D)$.
Therefore we have 
\[E_{{\le}{\le}k}(D) = 2{{k+2}\choose{2}} + E_{{\le}{\le}k-1}(D') + E_{{\le}k}(D,D').\]

By the induction hypothesis we know that $E_{{\le}{\le}k-1}(D') \ge 3{{k+2} \choose {3}}$  and thus we obtain
\[E_{{\le}{\le}k}(D) \ge 3{{k+3} \choose {3}} - {{k+2} \choose {2}} + E_{{\le}k}(D,D').\]

The theorem follows by plugging the lower bound from Corollary~\ref{cor_invariant}.  
\end{proof}


\subsection{Removing even adjacent crossings} 
Here we finish the proof of Theorem~\ref{theorem_1} by showing that allowing adjacent edges to cross evenly yields no substantially new monotone drawings of $K_n$.

The {\em rotation\/} at a vertex $v$ in a drawing is the clockwise cyclic order of the neighbors of $v$ in which the corresponding edges appear around $v$. The {\em rotation system\/} of a drawing is the set of rotations of all its vertices.

\begin{proposition}\label{proposition_prekreslovaci}
Let $D$ be a weakly semisimple monotone drawing of $K_n$. Then there is a semisimple monotone drawing $D'$ of $K_n$ such that for every two edges $e,f$ of $K_n$, the parity of the number of crossings between $e$ and $f$ in $D'$ is the same as in $D$. Moreover, $D'$ and $D$ have the same rotation system and the same above/below relations of vertices and edges.
\end{proposition}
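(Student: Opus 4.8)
The plan is to redraw the edges one at a time, keeping each edge inside its homotopy class relative to its endpoints in the complement of the remaining $n-2$ vertices, and to exploit the fact that for $x$-monotone curves this class is governed exactly by the above/below relations. As before, I place all vertices on the $x$-axis with increasing $x$-coordinates and view each edge $v_av_b$ ($a<b$) as the graph of a continuous function $y_{ab}$ on $[x(v_a),x(v_b)]$. For a vertex $v_m$ with $x(v_a)<x(v_m)<x(v_b)$, the above/below relation of $v_m$ and $v_av_b$ is the sign of $y_{ab}(x(v_m))$, and the rotation at $v_i$ is recorded by the vertical order of the incident edges near $v_i$. Preserving all of this, on top of the parity of crossings of every pair of edges, is precisely the conclusion we want.

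Two observations drive the argument. (I) Any two $x$-monotone arcs with the same endpoints and the same above/below relations with all intermediate vertices are homotopic rel endpoints in $\mathbb{R}^2$ minus the other vertices: the straight-line homotopy between their defining functions is again a graph over the same interval, and at each intermediate vertex the two function values have equal sign, so the homotopy never meets a vertex. Consequently, if $D'$ preserves all above/below relations, then every edge stays in its $D$-class, and the parity of crossings of every \emph{independent} pair is unchanged. Indeed, the concatenation of an edge with the reverse of its redrawn copy is a loop that is null-homotopic in the complement of the other vertices, hence has winding number $0$ about each of them, and any arc joining two such vertices therefore meets it an even number of times. (II) For an \emph{adjacent} pair $e=v_iv_j$, $f=v_iv_k$ with $x(v_j)\le x(v_k)$, the number of crossings equals the number of interior sign changes of $g=y_{ij}-y_{ik}$ on $[x(v_i),x(v_j)]$; since $g(x(v_i))=0$ and $g(x(v_j))=-y_{ik}(x(v_j))$, this parity is determined by the rotation at $v_i$ (which of $e,f$ leaves higher) together with the above/below relation of $f$ and $v_j$. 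In a weakly semisimple drawing this parity is even, so these two boundary signs agree.

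With this in hand I build $D'$ by replacing each edge, one at a time, by a \emph{taut} monotone representative of its class: a monotone curve through every intermediate vertex on the prescribed side, reproducing the prescribed rotation by ordering the incident edges infinitesimally near each vertex, and otherwise oscillating as little as possible; a final perturbation puts everything in general position. By construction $D'$ is monotone with the same rotation system and the same above/below relations, so by (I) all independent crossing parities are preserved. It remains to check that $D'$ is semisimple. For an adjacent pair $e,f$ as in (II), preservation of the rotation at $v_i$ and of the side of $v_j$ keeps the two boundary signs of $g$ equal; as the representatives are taut, $g$ has no superfluous interior sign change, so $e$ and $f$ are disjoint. Thus the adjacent parity drops from even to $0$, which is still even, and (II) is respected as well.

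The step I expect to be the main obstacle is the last one: realizing the taut representatives of adjacent edges \emph{simultaneously} without crossings while each edge still passes every intermediate vertex on the correct side. Naively, forcing $f$ onto the prescribed side of an intermediate vertex seems able to force a crossing with $e$; the resolution is that the original drawing $D$ already witnesses that this side data is mutually consistent, so no genuine conflict can arise. The cleanest way to make this rigorous, and the route I would take, is to reach $D'$ from $D$ not by redrawing from scratch but by a homotopy that cancels adjacent crossings in pairs through vertex-free lenses at the shared endpoint: each such cancellation keeps every edge in its class, hence by (I) preserves all above/below relations and all independent parities, while (II) guarantees that the crossings of an adjacent pair always come in cancellable pairs. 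Showing that one can always locate such a vertex-free cancellable lens—reconfiguring when the innermost lens happens to enclose a vertex—is the technical heart of the argument.
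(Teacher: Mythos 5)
Your observations (I) and (II) are correct and match the invariants the paper's proof also relies on, but the proposal has a genuine gap, and it sits exactly where you say it does: neither of your two routes establishes the key step. For the ``taut representative'' route, the assertion that ``the original drawing $D$ already witnesses that this side data is mutually consistent, so no genuine conflict can arise'' begs the question. Weak semisimplicity only tells you that each adjacent pair crosses with even parity; to realize all edges simultaneously with \emph{no} adjacent crossings you need the above/below data to contain no forbidden $4$-tuple in the sense of Theorem~\ref{theorem_classif_semisimple} (equivalently, that the sign data induces a genuine total order, not merely a tournament, on the edges leaving each vertex), and this requires an argument --- it is precisely the transitivity claim in that theorem's proof, not something your setup provides for free.

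For your fallback route (cancelling adjacent crossings through vertex-free lenses), which is indeed the paper's actual strategy, the missing lemma is the existence of a vertex-free lens; you explicitly defer it, and the phrase ``reconfiguring when the innermost lens happens to enclose a vertex'' is left with no content. The paper closes this gap with a short argument that makes any reconfiguration unnecessary: let $e,f$ share the vertex $v$, say with $e$ above $f$ near $v$, and let $x_1,x_2$ be the two crossings of $e$ and $f$ closest to $v$. If the bigon $B$ bounded by the portions of $e$ and $f$ between $x_1$ and $x_2$ contained a vertex $w$ in its interior, then $w$ would lie above $e$ and below $f$; since the edge $vw$ crosses each of $e$ and $f$ an even number of times (weak semisimplicity) and the drawing is monotone, $vw$ would have to leave $v$ above $e$ and below $f$, which is impossible because $e$ is above $f$ there. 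Hence the innermost lens at the shared vertex is automatically vertex-free. Every other edge then crosses $\partial B$ an even number of times, so pushing $e$ (or $f$) across $B$ preserves the rotation system, the above/below relations and all crossing parities while destroying two crossings; choosing $D'$ crossing-minimal among all drawings strongly equivalent to $D$ (the paper's extremal formulation) turns this into a one-step contradiction rather than an iterative process. With this lemma inserted your outline becomes the paper's proof; without it, it is a plan rather than a proof.
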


\begin{proof}
Let $O(D)$ be the set of pairs of edges of $K_n$ that cross an odd number of times in $D$. 
Let $D'$ be a weakly semisimple monotone drawing of $K_n$ with minimum total number of crossings such that $D'$ is {\em strongly equivalent\/} to $D'$, that is, $D'$ and $D$ have the same rotation system, the same above/below relations of vertices and edges and $O(D')=O(D)$. 
We show that $D'$ is semisimple.

Suppose for contrary that $D'$ has two adjacent edges $e,f$ that cross. Since $D'$ is weakly semisimple, $e$ and $f$ cross at least twice. Let $v$ be the common vertex of $e$ and $f$ and suppose that $e$ is above $f$ in the neighborhood of $v$. Let $x_1$ and $x_2$ be the two crossings of $e$ and $f$ closest to $v$. See Figure~\ref{fig_weakly_bigon}, left. Let $B$ be the closed topological disc bounded by the two portions of $e$ and $f$ between $x_1$ and $x_2$. Clearly, $B$ has no vertex on its boundary. Moreover, we claim that $B$ has no vertex in its interior. For if $B$ contains a vertex $w$ in its interior, then $w$ is below $f$ and above $e$. This implies that the edge $vw$ is below $f$ and above $e$ in the neighborhood of $v$, which is absurd.

\begin{figure}
 \begin{center}
   \includegraphics[scale=1]{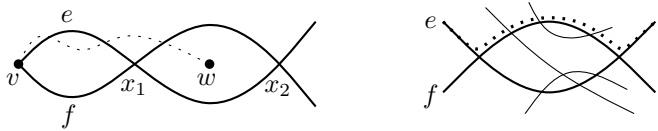}
  \caption{Left: the edge $vw$ is forced to cross $e$ or $f$ an odd number of times. Right: decreasing the total number of crossings.}
 \label{fig_weakly_bigon}
 \end{center}
\end{figure}

Since $B$ contains no vertices, every edge other than $e$ and $f$ crosses the boundary of $B$ an even number of times. Therefore, by redrawing an open segment of $e$ or $f$ containing $x_1$ and $x_2$ along the other side of $B$, we obtain a drawing strongly equivalent to $D'$ with at most $\text{cr}(D')-2$ crossings. See Figure~\ref{fig_weakly_bigon}, right.
\end{proof}

We note that using slightly more careful redrawing operations (such as those in the proof of Theorem~\ref{theorem_classif_pseudolin} in Section~\ref{section3}), we may obtain a semisimple monotone drawing $D''$ strongly equivalent to $D$ such that for every two edges, the number of their common crossings in $D''$ is not larger than in $D$.

By Proposition~\ref{proposition_prekreslovaci}, the odd crossing number of a weakly semisimple monotone drawing of $K_n$ is equal to the odd crossing number of some semisimple monotone drawing of $K_n$. This proves the first equality in Theorem~\ref{theorem_1}.


\section{Combinatorial description of monotone drawings}\label{section3}

In this section we develop a combinatorial characterization of $x$-monotone drawings based on the signature functions introduced by Peters and Szekeres\mycite{SP06_computer_17} as generalizations of order types of planar point sets. Let $T_n$ be the set of ordered triples $(i,j,k)$ with $i<j<k$, of the set $[n] = \{1,2,\ldots,n\}$ and let $\Sigma_n$ be the set of {\em signature functions\/} $\sigma \colon T_n \to \{-,+\}$. The set $T_n$ may be also regarded as the set ${[n]\choose 3}$ of all unordered triples, since we write all the triples in the increasing order of their elements. 

Let $D$ be an $x$-monotone drawing of the complete graph $K_n=(V,E)$ with vertices $v_1,v_2,\ldots,v_n$ such that their $x$-coordinates satisfy $x(v_1) < x(v_2) < \cdots < x(v_n)$. We assign a signature function $\sigma \in \Sigma_n$ to the drawing $D$ according to the following rule. For every edge $e=v_iv_k \in E$ and every integer $j\in(i,k)$, let $\sigma(i,j,k) = -$ if the point $v_j$ lies above the arc representing the edge $e$ and $\sigma(i,j,k)=+$ otherwise. See Figure~\ref{fig:sigma}. Note that if the drawing $D$ is also semisimple, then a triangle $v_iv_jv_k$, with $j \in (i,k)$, is oriented clockwise (counter-clockwise) if and only if $\sigma(i,j,k)=-$ ($\sigma(i,j,k)=+$, respectively).

\begin{figure}
 \begin{center}
  \includegraphics{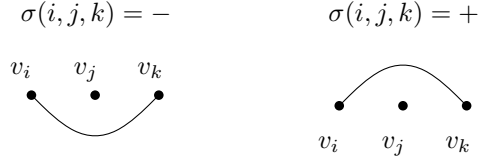}
  \caption{The negative and the positive signature $\sigma(i,j,k)$.}
  \label{fig:sigma}
 \end{center}
\end{figure}

It is easy to see that, for every signature function $\sigma \in \Sigma_n$, there exists an $x$-monotone drawing $D$ which induces $\sigma$. However, some signature functions are induced only by drawings that are not semisimple.
We show a characterization of simple and semisimple $x$-monotone drawings by small forbidden configurations in the signature functions.

For integers $a,b,c,d \in [n]$ with $a < b < c < d$, signs $\xi_1,\xi_2,\xi_3,\xi_4 \in \{-,+\}$ and a signature function $\sigma \in \Sigma_n$,
 we say that the 4-tuple $(a,b,c,d)$ {\em is of the form $\xi_1\xi_2\xi_3\xi_4$ in $\sigma$} if 
\[\sigma(a,b,c)=\xi_1,\; \sigma(a,b,d)=\xi_2,\; \sigma(a,c,d)=\xi_3, \text{ and } \; \sigma(b,c,d)=\xi_4.\]
Alternatively, we write 
$\sigma(\{\pi(a),\pi(b),\pi(c),\pi(d)\})=\xi_1\xi_2\xi_3\xi_4$ for any permutation $\pi$ of the set $\{a,b,c,d\}$.

For a sign $\xi \in \{-,+\}$ we use $\overline{\xi}$ to denote the opposite sign, that is, if $\xi = +$ then $\overline{\xi}=-$ and conversely, if $\xi = -$ then $\overline{\xi}=+$.

\subsection{Simple and semisimple $x$-monotone drawings}

\begin{theorem}
\label{theorem_classif_semisimple}
A signature function $\sigma \in \Sigma_n$ can be realized by a semisimple $x$-monotone drawing if and only if every $4$-tuple of indices from $[n]$ is of one of the forms
\begin{align*}
 {+}{+}{+}{+},&{-}{-}{-}{-},{+}{+}{-}{-},{-}{-}{+}{+},{-}{+}{+}{-},{+}{-}{-}{+}, \\
& {-}{-}{-}{+},{+}{+}{+}{-},{+}{-}{-}{-},{-}{+}{+}{+}
\end{align*}
in $\sigma$. The signature function $\sigma$ can be realized by a simple $x$-monotone drawing if, in addition,  there is no $5$-tuple $(a,b,c,d,e)$ with $a<b<c<d<e$ such that \[\sigma(a,b,e)=\sigma(a,d,e)=\sigma(b,c,d)=\overline{\sigma(a,c,e)}.\]
\end{theorem}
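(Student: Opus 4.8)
My plan is to reduce the global realizability of $\sigma$ to purely local conditions: for necessity I show that each forbidden configuration forces, in \emph{every} realization, either two adjacent edges to cross (the $4$-tuple forms) or two independent edges to cross twice (the $5$-tuple pattern); for sufficiency I build the drawing by inserting vertices from left to right, using the local conditions to keep the construction consistent. For necessity of the $4$-tuple condition, fix $a<b<c<d$ and look at the induced sub-drawing, a semisimple $x$-monotone $K_4$. Exactly three of its edges, namely $v_av_c$, $v_av_d$ and $v_bv_d$, pass an interior vertex, so $\sigma$ prescribes on which side of $v_b$ and of $v_c$ each of them runs; this pins down the vertical order of any two of them at the $x$-coordinate of a vertex lying between their endpoints. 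There are only two adjacent pairs among these three rigid edges, $\{v_av_c,v_av_d\}$ (sharing $v_a$) and $\{v_av_d,v_bv_d\}$ (sharing $v_d$). For each pair I compare the forced order at the two relevant $x$-coordinates; whenever it reverses, the two $x$-monotone arcs must cross, contradicting semisimplicity. A direct check shows that a reversal is forced exactly for the six excluded forms ${+}{+}{-}{+}$, ${+}{-}{+}{+}$, ${+}{-}{+}{-}$, ${-}{+}{-}{+}$, ${-}{+}{-}{-}$, ${-}{-}{+}{-}$, so only the ten listed forms can occur.

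For sufficiency in the semisimple case I induct on $n$, the cases $n\le 3$ being trivial. Given a semisimple $x$-monotone realization $D'$ of $\sigma|_{[n-1]}$ (which exists by induction, since every $4$-tuple of $[n-1]$ is one of $[n]$), I insert the rightmost vertex $v_n$ together with the fan of edges $v_1v_n,\dots,v_{n-1}v_n$; as no edge of $D'$ reaches to the right of $v_{n-1}$, only the new edges must be routed there. The crux is that the $4$-tuple conditions make the intended rotation at $v_n$ well defined: declaring $v_iv_n$ to lie above $v_{i'}v_n$ (for $i<i'$) exactly when $\sigma(i,i',n)={+}$ gives a relation whose only possible $3$-cycles, over triples $i<i'<i''$, are precisely the forbidden forms on $(i,i',i'',n)$; excluding these yields a genuine linear order and hence a non-crossing fan realizing all the values $\sigma(\cdot,\cdot,n)$. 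It then remains to route each $v_iv_n$ past the interior vertices on the prescribed sides without crossing an old edge $v_iv_m$ sharing $v_i$; such a crossing would again be a forced order reversal, this time governed by the quadruple $(i,j,m,n)$ for an interior $j$, and is ruled out by the same $4$-tuple condition.

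For the simple case, necessity follows by restricting to $a<b<c<d<e$: the displayed relations $\sigma(a,b,e)=\sigma(a,d,e)=\overline{\sigma(a,c,e)}$ place $v_b,v_d$ on one side and $v_c$ on the other side of the arc $v_av_e$, while $\sigma(b,c,d)$ puts $v_c$ on the far side of $v_bv_d$; tracking the vertical order of the two \emph{independent} arcs $v_av_e$ and $v_bv_d$ across $v_b$, $v_c$, $v_d$ then produces two reversals, so these arcs cross at least twice and the drawing is not simple. For sufficiency I strengthen the inductive routing so that each new edge meets each old edge at most once; a forced second crossing between a new edge $v_iv_n$ and an old edge localizes to five vertices and reproduces exactly the excluded $5$-tuple (with $e=n$), so forbidding that pattern keeps the augmented drawing simple.

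The main obstacle is the sufficiency direction, where conditions imposed only on $4$- and $5$-element subsets must be shown to guarantee a globally consistent drawing. Everything hinges on the well-definedness of the rotation at the inserted vertex --- a transitivity statement whose sole obstructions are the forbidden quadruples --- together with the simultaneous bookkeeping that routes all new edges past the interior vertices without creating an adjacent crossing (semisimple) or a repeated crossing (simple). Once the rotation is known to be a linear order and the no-reversal statements are in place, drawing the actual arcs is routine, so the entire difficulty is concentrated in translating the combinatorial conditions into the absence of these order reversals.
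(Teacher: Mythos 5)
Your necessity arguments are fine and coincide with the paper's (forced crossing of two adjacent edges for a forbidden $4$-tuple, two forced crossings of $v_av_e$ and $v_bv_d$ for a forbidden $5$-tuple). The genuine gap is in sufficiency. You fix an \emph{arbitrary} semisimple realization $D'$ of $\sigma|_{[n-1]}$ and insert the star of $v_n$, claiming that once the rotation at $v_n$ is transitive and each pair (new edge, old edge sharing $v_i$) has no forced order reversal, ``drawing the actual arcs is routine.'' It is not: all your verified statements are \emph{pairwise}, whereas what is needed is \emph{joint} realizability of all new arcs inside the frozen drawing $D'$, and the two do not coincide. Concretely, suppose the new arc $A=v_iv_n$ is pinned above the old arc $B=v_iv_m$, the new arc $A'=v_{i'}v_n$ is pinned below the old arc $B'=v_{i'}v_{m'}$, and the rotation pins $A'$ above $A$; since $D'$ is only semisimple, the independent old edges $B$ and $B'$ may cross each other \emph{twice}, and on any vertical line inside the resulting lens where $B$ runs above $B'$ one gets the cycle $A>B>B'>A'>A$, so no insertion into \emph{this} $D'$ exists, even though every pairwise pin is consistent. (Such lenses do occur in legitimate realizations: for nested indices the lens forces the allowed form ${+}{-}{-}{+}$ on its four vertices.) Since the theorem is true, the $4$-tuple conditions must in the end exclude every such cyclic conflict --- in the five-point instance above the required pins do produce a forbidden $4$-tuple elsewhere --- but verifying this for all pin placements, index patterns, and longer cycles is precisely the content of the sufficiency proof, and it is entirely absent from your proposal; moreover, ``some realization of $\sigma|_{[n-1]}$ exists'' is too weak an induction hypothesis to even start this verification, because extendability depends on which realization the induction hands you.

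This is exactly why the paper does not induct on sub-drawings at all: it builds the whole drawing in a single left-to-right sweep, defining for each vertical line $L_m$ a total order $\prec_m$ of \emph{all} edges crossing it (via auxiliary orders $\lessdot_i$ of the right edges at each $v_i$, whose transitivity is derived from the $4$-tuple conditions, and the six sets $S_1,\dots,S_6$), so that no partial drawing is ever frozen and the extension problem never arises; semisimplicity and, in the second part, simplicity are then checked by case analysis on this explicit construction. To salvage your scheme you would have to strengthen the induction hypothesis to a canonical (sweep-defined) realization and prove an extendability lemma for it, which amounts to redoing the paper's construction. A smaller inaccuracy in your simple case: a forced double crossing does not always ``reproduce exactly the excluded $5$-tuple''; in the paper's analysis some cases yield a forbidden \emph{$4$-tuple} instead, and one case requires inspecting a third crossing, so that step also needs a mixed case analysis rather than a single localization claim.
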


See Figure~\ref{fig_semisimple} and Figure~\ref{fig_forb_5tuple_0} for an illustration of the first and the second part of the theorem.

\begin{proof}
Let $\sigma$ be a signature function with a {\em forbidden $4$-tuple}, that is, an ordered $4$-tuple $(a,b,c,d)$ whose form is not listed in the statement of the theorem. Such a $4$-tuple $(a,b,c,d)$ is one of the forms ${\xi_1}{\overline{\xi_1}}{\xi_1}{\xi_2}$ or ${\xi_2}{\xi_1}{\overline{\xi_1}}{\xi_1}$ where $\xi_1, \xi_2 \in \{-,+\}$. If $(a,b,c,d)$ is of the form ${+}{-}{+}{\xi}$ where $\xi \in \{-,+\}$ is an arbitrary sign, then the edges $v_av_c$ and $v_av_d$ are forced to cross between the vertical lines going through $v_b$ and $v_c$; see Figure~\ref{fig_ac_ad_cross}. But this is not allowed in a semisimple drawing and we have a contradiction. The other cases are symmetric.

\begin{figure}
 \begin{center}
   \includegraphics[scale=1]{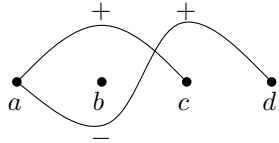}
  \caption{A $4$-tuple $(a,b,c,d)$ of the form ${+}{-}{+}{\xi}$ forces two adjacent edges to cross.}
 \label{fig_ac_ad_cross}
 \end{center}
\end{figure}

On the other hand, let $\sigma$ be a signature function such that every 4-tuple is of one of the ten allowed forms in $\sigma$. We will construct a semisimple $x$-monotone drawing $D$ of $K_n$ which induces $\sigma$. We use the points $v_i=(i,0)$, $i \in [n]$, as vertices and connect consecutive pairs of vertices by straight-line segments.

For $m \in [n]$, let $L_m$ be the vertical line containing $v_m$. In every $x$-monotone drawing, the line $L_m$ intersects every edge $\{v_i,v_j\}$ with $1\le i < m\le j \le n$ exactly once. 
To draw the edges of $K_n$, it suffices to specify the positions of their
intersections with the lines $L_m$ and to draw the edges as polygonal lines with bends at these intersections.
Instead of the absolute position of these intersections on $L_m$, we only need to determine their vertical total ordering, which we represent by a total ordering $\prec_m$ of the corresponding edges. The edges whose right endpoint is $v_m$ will be ordered by $\prec_m$ according to their vertical order in the left neighborhood of $v_m$. The edges with left endpoint $v_m$ are not considered in $\prec_m$.

The idea of the construction is to interpret the signature function as the set of above/below relations for vertices and edges and take a set of orderings $\prec_m$ that obey these relations and minimize the total number of crossings.
In the rest of the proof we show a detailed, explicit construction of the orderings $\prec_m$ which induce an $x$-monotone semisimple drawing.

For $i \in [n]$, we define an ordering $\lessdot_i$ of the edges with a common left endpoint $v_i$ (that is, the right edges at $v_i$) in the following way. If $e=\{v_i,v_j\}$ and $f=\{v_i,v_k\}$, $i<j,k$, are two such edges, then we set $e \lessdot_i f$ if either $j<k$ and $\sigma(i,j,k)=+$, or $k<j$ and $\sigma(i,k,j)=-$. Clearly, the relation $\lessdot_i$ is irreflexive, antisymmetric and for every two right edges $e,f$ at $v_i$ either $e \lessdot_i f$ or $f \lessdot_i e$. 
To show that $\lessdot_i$ is a total ordering, it remains to prove that it is transitive.
Suppose for contrary that there are three edges $e=\{v_i,v_j\}$, $f=\{v_i,v_k\}$ and $g=\{v_i,v_l\}$ with $i<j<k<l$ such that $e \lessdot_i f$, $f \lessdot_i g$ and $g \lessdot_i e$. Then $\sigma(i,j,k)=+$, $\sigma(i,k,l)=+$ and $\sigma(i,j,l)=-$, so the 4-tuple $i,j,k,l$ is of the form ${+}{-}{+}\xi$, which is forbidden.
Similarly, if $f \lessdot_i e$, $e \lessdot_i g$ and $g \lessdot_i f$, then the 4-tuple $i,j,k,l$ is of the form ${-}{+}{-}\xi$, which is forbidden as well.

We proceed by induction on $m$. In the case $m=1$ the ordering $\prec_1$ is empty. For $m=2$ the ordering $\prec_2$ compares only edges with the common endpoint $v_1$, so we can set ${\prec_2}={\lessdot_1}$. Since all the edges are drawn by line segments starting in a common endpoint, no crossings appear between $L_1$ and $L_2$.

Let $m>2$. For the inductive step we consider the following sets $S_1,\ldots,S_6$ of edges which intersect $L_{m-1}$ and $L_{m}$ (see Figure~\ref{fig:orderingEdges}):

\begin{align*}
&S_1=\{\{v_i,v_j\} \mid \sigma(i,m-1,j)=-, \sigma(i,m,j)=- \},\\
&S_2=\{\{v_{m-1},v_j\} \mid \sigma(m-1,m,j)=- \},\\
&S_3=\{\{v_i,v_j\} \mid \sigma(i,m-1,j)=+, \sigma(i,m,j)=- \textrm{ or } j=m \},\\
&S_4=\{\{v_i,v_j\} \mid \sigma(i,m-1,j)=-, \sigma(i,m,j)=+ \textrm{ or } j=m \},\\
&S_5=\{\{v_{m-1},v_j\} \mid \sigma(m-1,m,j)=+ \}, \\
&S_6=\{\{v_i,v_j\} \mid \sigma(i,m-1,j)=+, \sigma(i,m,j)=+ \}.
\end{align*}
The edges within sets $S_2$ and $S_5$ are ordered according to $\lessdot_{m-1}$ and the edges in each of the remaining sets $S_k$ according to $\prec_{m-1}$. For $e \in S_k$ and $f \in S_\hell$ where $k < \hell$, we set $e \prec_m f$. Observe that $\prec_m$ is a total ordering.

\begin{figure}
	\centering
	\includegraphics[scale=1]{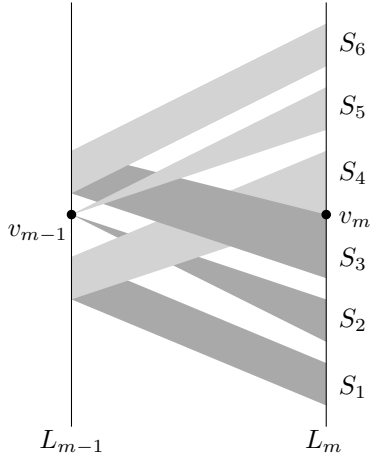}
	\caption{Placing edges and minimizing the number of crossings.}
	\label{fig:orderingEdges}
\end{figure}

We show that the drawing $D$ determined by the orders $\prec_m$ is semisimple. Suppose for contradiction that two adjacent edges $e=\{v_i,v_j\}$ and $f=\{v_i,v_k\}$, with $i<j,k$ and $e \lessdot_i f$, cross. Their leftmost crossing occurs between lines $L_{m-1}$ and $L_m$, where $i < m-1$ and $m\le j,k$. There are three cases: 
\begin{enumerate}[(i)]
\item $e \in S_6$ and $f \in S_3$, 
\item $e \in S_4$ and $f \in S_1$, or
\item $e \in S_4$ and $f \in S_3$.
\end{enumerate}

We analyze the cases (i) and (iii) together, case (i) and case (ii) are symmetric. If $j<k$ then $\sigma(i,m,k)= -$ and by the definition of the relation $\lessdot_i$, we have $\sigma(i,j,k)= +$. This further implies that $m<j$ and $\sigma(i,m,j)=+$. Thus $(i,m,j,k)$ forms a forbidden 4-tuple. If $k<j$, then $\sigma(i,m,j)= +$, $\sigma(i,k,j)= -$, which implies that $m<k$ and $\sigma(i,m,k)= -$, and so we obtain a forbidden $4$-tuple $(i,m,k,j)$. 

Now suppose that two adjacent edges $e=\{v_i,v_k\}$ and $f=\{v_j,v_k\}$, with $i,j<k$, cross. Their leftmost crossing occurs between lines $L_{m-1}$ and $L_m$, where $i,j \le m-1$ and $m<k$. We may assume that $f\prec_m e$ and $e\prec_{m-1} f$.
There are five cases: 
\begin{enumerate}[(i)]
\item $e \in S_6$ and $f \in S_3$, 
\item $e \in S_4$ and $f \in S_1$,
\item $e \in S_4$ and $f \in S_3$,
\item $e \in S_4$ and $f \in S_2$, or
\item $e \in S_5$ and $f \in S_3$.
\end{enumerate}

Case (i) and case (ii) are symmetric, as well as case (iv) and case (v). Therefore it is sufficient to consider cases (i), (iii) and (v). In all these three cases $\sigma(j,m,k)= -$ and $\sigma(i,m,k)= +$. If $j<i$, then $\sigma(j,i,k)= +$ since $e\prec_{m-1} f$ and the edges $e$ and $f$ do not cross to the left of $L_{m-1}$. Hence $(j,i,m,k)$ forms a forbidden $4$-tuple. If $i<j$, then analogously $\sigma(i,j,k)=-$ and $(i,j,m,k)$ forms a forbidden $4$-tuple.
This finishes the proof that $D$ is semisimple.

It remains to show the second part of the theorem.
If $D$ is a drawing with a signature function $\sigma$ with a {\em forbidden $5$-tuple\/} $(a,b,c,d,e)$, then $D$ is not simple as the edges $v_av_e$ and $v_bv_d$ are forced to cross at least twice; see Figure~\ref{fig_forb_5tuple_0}.

\begin{figure}
 \begin{center}
   \includegraphics[scale=1]{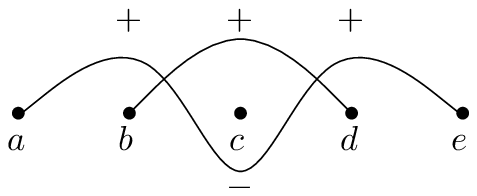}
  \caption{A forbidden $5$-tuple $(a,b,c,d,e)$ forces at least two crossings between $v_av_e$ and $v_bv_d$.}
 \label{fig_forb_5tuple_0}
 \end{center}
\end{figure}

In the rest of the proof we show the second part of the theorem.

Given a signature function $\sigma$ with no forbidden 4-tuples and 5-tuples we apply the same construction as before to obtain a semisimple $x$-monotone drawing $D$. We show that $D$ is, in addition, simple. Since $D$ is semisimple, no two crossing edges have an endpoint in common. By the construction of $D$, every crossing $c$ of two edges $e$ and $f$ occurs between lines $L_{m}$ and $L_{m+1}$ for some $m \in [n-1]$ and we say that 
$v_{m+1}$ is the {\em right neighbor\/} of $c$. The right neighbor is either an endpoint of $e$ or $f$ or it separates the crossings of $L_{m+1}$ with $e$ and $f$. Suppose that there are edges $e=v_iv_j$ and $f=v_kv_\hell$ with $i<k<j,\hell$ that cross at least twice. We show that then there is always a forbidden 4-tuple or a forbidden 5-tuple in $\sigma$.

Let $v_m$ be the right neighbor of the leftmost crossing and $v_{m'}$ the right neighbor of the second leftmost crossing of $e$ and $f$. Observe that $i,k<m<m'\le j,l$.

First assume that $\hell<j$. Refer to Figure~\ref{fig_forb_5tuple_1}. If $\sigma(i,k,j)=\sigma(i,\hell,j)=\xi$ for some $\xi \in \{-,+\}$, then  $\xi=\sigma(k,m,\hell)=\overline{\sigma(i,m,j)}$ and so $(i,k,m,\hell,j)$ forms a forbidden 5-tuple. If $\sigma(i,k,j)=\overline{\sigma(i,\hell,j)}=\xi$ for some $\xi \in \{-,+\}$, then $e$ and $f$ cross at least three times and so $m'<\hell,j$. We have $\xi=\sigma(k,m,\hell)=\overline{\sigma(i,m,j)}=\overline{\sigma(k,m',\hell)}=\sigma(i,m',j)$. 
If $\sigma(k,m,m')=\overline{\xi}$, then $(k,m,m',\hell)$ forms a forbidden $4$-tuple. If $\sigma(k,m,m')=\xi$, then $(i,k,m,m',j)$ forms a forbidden $5$-tuple.


\begin{figure}
 \begin{center}
   \includegraphics[scale=1]{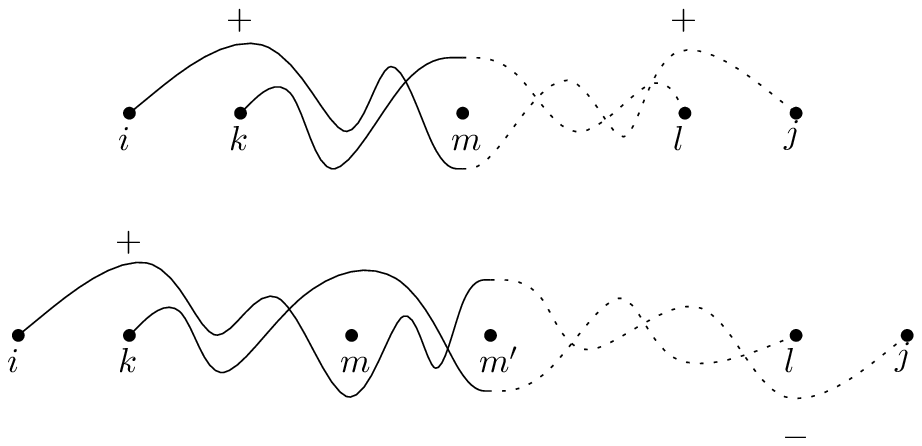}
  \caption{Edges $v_iv_j$ and $v_kv_{\hell}$ crossing twice imply a forbidden $5$-tuple or $4$-tuple; case $\hell<j$.}
 \label{fig_forb_5tuple_1}
 \end{center}
\end{figure}

\begin{figure}
 \begin{center}
   \includegraphics[scale=1]{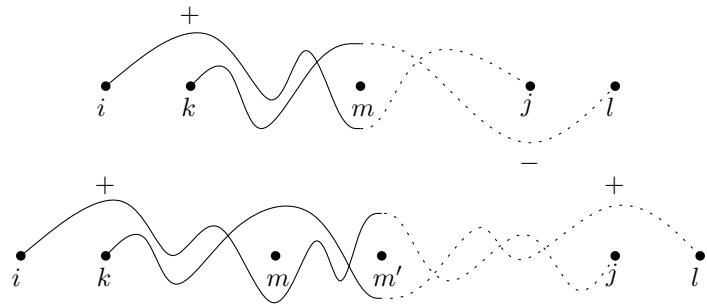}
  \caption{Edges $v_iv_j$ and $v_kv_{\hell}$ crossing twice imply a forbidden $5$-tuple or $4$-tuple; case $j<\hell$.}
 \label{fig_forb_5tuple_2}
 \end{center}
\end{figure}

Conversely let $j<\hell$. Refer to Figure~\ref{fig_forb_5tuple_2}. Assume that $\sigma(i,k,j)=\overline{\sigma(k,j,\hell)}=\xi$ for some $\xi \in \{-,+\}$. Then $\xi=\sigma(k,m,l)=\overline{\sigma(i,m,j)}$. If $\sigma(k,m,j)=\xi$, we get a forbidden 4-tuple $(i,k,m,j)$, otherwise $\sigma(k,m,j)=\overline{\xi}$ and we get a forbidden 4-tuple $(k,m,j,\hell)$.
Finally, assume that $\sigma(i,k,j)=\sigma(k,j,\hell)=\xi$ for some $\xi \in \{-,+\}$.
The proof in this case is identical to the proof of the case $\hell<j$ and $\sigma(i,k,j)=\overline{\sigma(k,j,\hell)}=\xi$ in the previous paragraph. 
\end{proof}


\subsection{Pseudolinear $x$-monotone drawings}

A drawing $D$ of a complete graph $K_n$ is {\em pseudolinear\/} (also {\em pseudogeometric} or {\em extendable}) if the edges of $D$ can be extended to unbounded simple curves that cross each other exactly once, thus forming an {\em arrangement of pseudolines}. The vertices of $D$ together with the  ${n \choose 2}$ pseudolines extending the edges are said to form a {\em pseudoarrangement of points} (also {\em generalized configuration of points}). Note that the pseudoarrangement of points extending $D$ is usually not unique as there is a certain freedom in choosing where the pseudolines extending disjoint noncrossing edges of $D$ cross.

It is well known that every arrangement of pseudolines can be made $x$-monotone by a suitable isotopy of the plane (this follows, for example, by the duality transform established by Goodman\mycite{G80_proof,GP84_semis}). Therefore, every pseudolinear drawing of $K_n$ is isotopic to an $x$-monotone pseudolinear drawing. Every rectilinear drawing of $K_n$ is $x$-monotone and pseudolinear, but there are pseudolinear drawings of $K_n$ that cannot be ``stretched'' to rectilinear drawings.

We show that $x$-monotone pseudolinear drawings of $K_n$ can be characterized in a combinatorial way by forbidden 4-tuples in the corresponding signature function, by further restricting the conditions on the signatures in Theorem~\ref{theorem_classif_semisimple}. In fact, the conditions in Theorem~\ref{theorem_classif_pseudolin} are precisely the {\em geometric constraints\/} that Peters and Szekeres\mycite{SP06_computer_17} used to restrict the set of signature functions in their investigation of the Erd\H{os}--Szekeres problem.
Figure~\ref{fig_semisimple} illustrates the classification of $4$-tuples from Theorem~\ref{theorem_classif_semisimple} and Theorem~\ref{theorem_classif_pseudolin}.

\begin{figure}
 \begin{center}
   \includegraphics[scale=1]{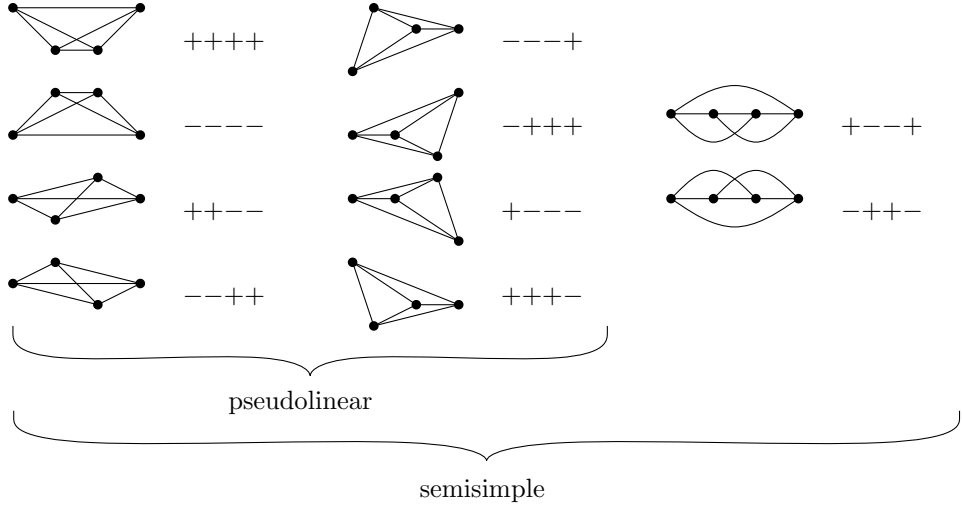}
  \caption{The $4$-tuples in pseudolinear and semisimple drawings.}
 \label{fig_semisimple}
 \end{center}
\end{figure}

\begin{theorem}\label{theorem_classif_pseudolin}
A signature function $\sigma \in \Sigma_n$ can be realized by a pseudolinear $x$-monotone drawing if and only if every ordered $4$-tuple of indices from $[n]$ is of one of the forms
\begin{align*}
& {+}{+}{+}{+},{+}{+}{+}{-},{+}{+}{-}{-},{+}{-}{-}{-}, \\
& {-}{-}{-}{-},{-}{-}{-}{+},{-}{-}{+}{+},{-}{+}{+}{+}
\end{align*}
in $\sigma$.
\end{theorem}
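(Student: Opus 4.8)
The plan is to prove both implications, after first isolating what the eight admissible forms mean. Comparing the list with the ten forms of Theorem~\ref{theorem_classif_semisimple}, the pseudolinear condition forbids exactly two further $4$-tuple forms, namely $-++-$ and $+--+$. A convenient reformulation is that the eight admissible sequences $\xi_1\xi_2\xi_3\xi_4$ are precisely those with at most one sign change when read from left to right (equivalently, all $+$'s precede all $-$'s, or vice versa), and the two newly forbidden forms are exactly the sequences with a change at $1\to 2$ and at $3\to 4$. I will use this reformulation in both directions.

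For the forward implication, recall that a pseudolinear drawing is simple, hence semisimple, so by Theorem~\ref{theorem_classif_semisimple} no semisimple-forbidden $4$-tuple occurs, and it remains to exclude $-++-$ and $+--+$. Suppose a $4$-tuple $(a,b,c,d)$ has the form $-++-$ (the case $+--+$ is symmetric under reversing all signs). Let $L_{ac}$ and $L_{bd}$ be the pseudolines extending the edges $v_av_c$ and $v_bv_d$. Since $\sigma$ records the orientation of triangles in a semisimple drawing, the values $\sigma(a,b,c)=-$ and $\sigma(a,c,d)=+$ together say that $b$ and $d$ lie on the same side of $L_{ac}$. On the other hand, $\sigma(a,b,c)=\sigma(b,c,d)$ forces $v_av_c$ and $v_bv_d$ to cross, and this crossing, being the unique intersection of $L_{ac}$ and $L_{bd}$, has $x$-coordinate strictly between $x_b$ and $x_c$, hence lies strictly between $b$ and $d$ along $L_{bd}$. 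Thus $b$ and $d$ are separated by $L_{ac}$, contradicting that they lie on the same side. Hence $-++-$ cannot occur.

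For the backward implication, I would first reduce to a simple drawing. The at-most-one-change condition also rules out the forbidden $5$-tuples of Theorem~\ref{theorem_classif_semisimple}: if $(a,b,c,d,e)$ were such a $5$-tuple, set $\epsilon=\sigma(a,b,e)=\sigma(a,d,e)=\sigma(b,c,d)$, so $\sigma(a,c,e)=\overline{\epsilon}$. Inspecting the $4$-subtuples $(a,b,c,e)$ and $(a,c,d,e)$, each has its middle two signs opposite, hence a forced change there and nowhere else, which pins down $\sigma(a,b,c)=\epsilon$ and $\sigma(a,c,d)=\overline{\epsilon}$. The $4$-subtuple $(a,b,c,d)$ then reads $\epsilon\,?\,\overline{\epsilon}\,\epsilon$ at positions $1,2,3,4$, which has at least two sign changes, a contradiction. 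Therefore Theorem~\ref{theorem_classif_semisimple} already provides a simple $x$-monotone drawing $D$ realizing $\sigma$, and it remains to upgrade $D$ to a pseudolinear drawing.

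To do this I would prolong each edge of $D$ to a bi-infinite $x$-monotone curve and arrange that any two of the resulting $\binom{n}{2}$ curves meet exactly once. Inside the strip $x_1\le x\le x_n$ the number of crossings of each pair is already $0$ or $1$ and is fixed by $D$, so the prolongations to the left of $v_1$ and to the right of $v_n$ must supply exactly one crossing for each currently disjoint pair and none for each pair that already crosses. As the curves are $x$-monotone, this amounts to choosing the vertical orders of the curves at the two ends of the strip so that every pair is inverted exactly once in total, i.e.\ to exhibiting $\sigma$ as the chirotope of a genuine pseudoconfiguration of points. The main obstacle is precisely this global consistency: showing that the purely local, $4$-tuple-wise condition guarantees a coherent choice of the boundary orders. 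I expect to settle it by verifying that the at-most-one-change condition is exactly the rank-$3$ oriented-matroid realizability condition on $\sigma$ in this ordered setting (the content of the Peters--Szekeres geometric constraints) and then invoking the representation of such configurations by pseudoline arrangements together with the fact, recalled earlier, that any pseudoline arrangement can be made $x$-monotone; connecting consecutive points along these pseudolines yields the desired pseudolinear $x$-monotone drawing inducing $\sigma$.
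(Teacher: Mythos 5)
Your forward direction and your preliminary reductions are sound: the eight admissible forms are indeed exactly the sign sequences with at most one sign change, your pseudoline-separation argument correctly excludes ${-}{+}{+}{-}$ and ${+}{-}{-}{+}$ (the paper gets this more quickly, by noting that every pseudolinear $x$-monotone drawing of $K_4$ is isotopic to one of the eight drawings in the first two columns of Figure~\ref{fig_semisimple}), and your check that the at-most-one-change condition also excludes the forbidden $5$-tuples of Theorem~\ref{theorem_classif_semisimple}, so that a simple $x$-monotone drawing realizing $\sigma$ exists, is correct.

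The gap is the backward direction, and it is not a detail: what you yourself label ``the main obstacle'' --- showing that the local $4$-tuple condition guarantees a globally consistent extension of the edges to pseudolines --- is the entire content of the theorem, and your proposal does not prove it. Saying you ``expect to settle it by verifying that the at-most-one-change condition is exactly the rank-$3$ oriented-matroid realizability condition'' and then ``invoking the representation of such configurations by pseudoline arrangements'' is circular in spirit: that equivalence (Felsner--Weil's characterization of triangle-sign functions of simple pseudoline arrangements, or Knuth's CC-system results) is precisely the statement being proved, in dual form. The paper explicitly notes that Theorem~\ref{theorem_classif_pseudolin} can be deduced from those earlier results, but a proof by citation would still have to carry out the translation --- dualizing, matching the fixed $x$-order $v_1,\dots,v_n$ and the above/below semantics of $\sigma$ to the triangle signs of the arrangement, and arguing that the realization can be isotoped to an $x$-monotone one with the correct signature --- and none of that appears in your proposal. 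Your framing of the extension problem is also inaccurate: you reduce it to ``choosing the vertical orders of the curves at the two ends of the strip,'' but the extension of an edge $v_iv_j$ with $i>1$ must first traverse the part of the strip to the left of $v_i$ (and similarly to the right of $v_j$), where it can, and in general must, cross other edges and other extensions, so the crossings to be added cannot be confined to the region outside $[x(v_1),x(v_n)]$. The paper fills exactly this hole with a self-contained construction: it prescribes, from $\sigma$, the local order of the pseudolines at each vertex and the above/below constraint for every pseudoline--vertex pair, takes curves satisfying all these constraints with the minimum total number of crossings, and then shows by bigon arguments (Claim~\ref{claim_empty_bigon}, Corollary~\ref{cor_smooth_bigon}, Claims~\ref{claim_sousedni} and~\ref{claim_nezavisle}) that any two of the resulting curves cross exactly once. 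Some argument of this kind --- or a fully worked-out deduction from the Felsner--Weil or Knuth results --- is needed to close your proof.
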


Pseudolinear drawings of complete graphs are equivalent to {\em CC systems\/} introduced by Knuth\mycommal\mycite{knuth92_axioms}\mycommar although this equivalence is not easily seen. The CC systems are ternary {\em counter-clockwise relations\/} of finite sets satisfying a certain set of five axioms involving triples, 4-tuples or 5-tuples of elements. CC systems generalize the {\em order types\/} of planar point sets in general position: an ordered triple in the counter-clockwise relation is interpreted as a triple of points in the plane placed in the counter-clockwise order, like a triple with signature $+$ in the signature function.  Unlike the signature functions, the CC systems have no fixed ordering of the elements. Therefore, some of the axioms for CC systems involve 5-tuples of elements, whereas 4-tuples are sufficient in the case of signature functions. In fact, the axioms of CC systems specify exactly that every 5-tuple of elements can be realized as a point set in the plane. Knuth\mycite{knuth92_axioms} established a correspondence between CC systems
and {\em reflection networks\/} (also called {\em wiring diagrams}), which are simple arrangements of pseudolines dual to the pseudoarrangements of points extending the pseudolinear drawings of complete graphs. Knuth\mycite{knuth92_axioms} also showed a two-to-one correspondence between CC systems and {\em uniform acyclic oriented matroids of rank 3\/} on the same underlying set. Here the CC system is, in fact, the {\em chirotope\/} of the corresponding oriented matroid.

Streinu\mycite{Str97_clusters} characterized sets of signed circular permutations ({\em directed clusters of stars}) that arise from generalized configurations of $n$ points as circular sequences of pseudolines at each of the $n$ points, and provided an $O(n^2)$ drawing algorithm, partially similar to ours. It is easy to show that the set of signed circular permutations determines the orientation of all triangles (and thus the corresponding CC system) and vice versa. However, many details are omitted in the extended abstract\mydotl\mycite{Str97_clusters}\mydotr

Felsner and Weil\mycite{Fel04_book,FW01_sweeps} proved that {\em triangle-sign functions\/} of simple arrangements of $n$ pseudolines are precisely those functions $f:{[n]\choose 3}\rightarrow \{+,-\}$ that are monotone on all $4$-tuples. This is the same condition as the condition on signature functions in Theorem~\ref{theorem_classif_pseudolin}. That is, Theorem~\ref{theorem_classif_pseudolin} is a dual analogue of Felsner's and Weil's result. Felsner and Weil\mycite{Fel04_book,FW01_sweeps} also introduced {\em $r$-signotopes}, a notion unifying permutations, allowable sequences and monotone triangle-sign functions of simple arrangements. In this notation, the signature functions satisfying the conditions of Theorem~\ref{theorem_classif_pseudolin} are 3-signotopes.

Although Theorem~\ref{theorem_classif_pseudolin} can be deduced from any of these previous results, we still believe that providing a direct, self-contained proof has its merit.

\subsubsection{Proof of Theorem~\ref{theorem_classif_pseudolin}}
Clearly, every pseudolinear $x$-monotone drawing of $K_4$ is isotopic to one of the eight drawings of $K_4$ in the first two columns in Figure~\ref{fig_semisimple}, and thus its signature function has one of the corresponding eight forms. 

Let $\sigma$ be a signature function such that every 4-tuple is of one of the eight allowed forms in $\sigma$. We show that there is a pseudolinear $x$-monotone drawing $D$ of $K_n$ which induces $\sigma$. Unlike in the proof of Theorem~\ref{theorem_classif_semisimple}, we do not provide an explicit construction of the drawing. However, our proof can be easily transformed into a polynomial algorithm finding such a drawing.

Again, we use the points $v_i=(i,0)$, $i \in [n]$, as vertices. 
For $m \in [n]$, let $L_m$ be the vertical line containing $v_m$. Let $L_0$ be the vertical line containing the point $(0,0)$.

To determine the drawing of $K_n$ and the pseudolines extending the edges, up to a combinatorial equivalence, it suffices to specify the left and right vertical orders of the pseudolines crossing at each of the points $v_i$, and the relative positions of the
intersections of the pseudolines with the lines $L_0, L_1, \dots, L_n$. 

For $i,j \in [n], i\neq j,$ let $p_{i,j}$ be the pseudoline extending the edge $v_iv_j$. We emphasize that we use both $p_{i,j}$ and $p_{j,i}$ to denote the same pseudoline.
We draw the pseudolines in the following way; see Figure~\ref{fig_kresleni_pseudoprimek}. For every $i\in [n]$, we draw a portion of each pseudoline $p_{i,j}$ containing $v_i$ as two short segments joining points $v_i-(\varepsilon,\delta_j)$, $v_i$ and $v_i+(\varepsilon,\delta'_j)$, where $\varepsilon,\delta_j$ and $\delta_j'$ are sufficiently small and the relative order of the $y$-coordinates $\delta_j$ ($\delta'_j$) is consistent with the left (right, respectively) vertical order of the pseudolines at $v_i$. It will follow from the construction that we can take $\delta'_j=-\delta_j$, so the two segments actually form one segment with midpoint $v_i$. We also choose the intersection points of the pseudolines $p_{i,j}$ with the lines $L_m$ $(i,j \neq m)$ sufficiently far from the points $v_m$ and consistently with the relative positions specified. Then for each pseudoline $p_{i,j}$, we connect consecutive intersections with lines $L_m$, $m\in \{0,1,\dots, n\} \setminus\{i,j\}$, and points $v_i\pm(\varepsilon,\delta_j)$ and $v_j\pm(\varepsilon,\delta_i)$ by straight-line segments. Finally, we attach horizontal rays starting at intersections of $p_{i,j}$ with $L_0$ directed to the left, and similarly, horizontal rays starting at intersections of $p_{i,j}$ with $L_{n}$ (if $i,j \neq n$) and at points $v_n+(\varepsilon,\delta'_j)$, directed to the right.


\begin{figure}
 \begin{center}
   \includegraphics[scale=1]{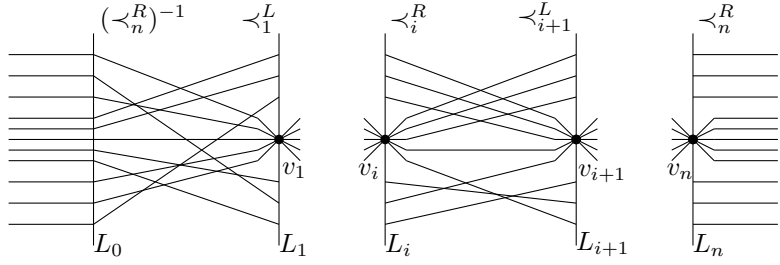}
  \caption{Drawing the curves $p_{i,j}$.}
 \label{fig_kresleni_pseudoprimek}
 \end{center}
\end{figure}

\begin{figure}
 \begin{center}
   \includegraphics[scale=1]{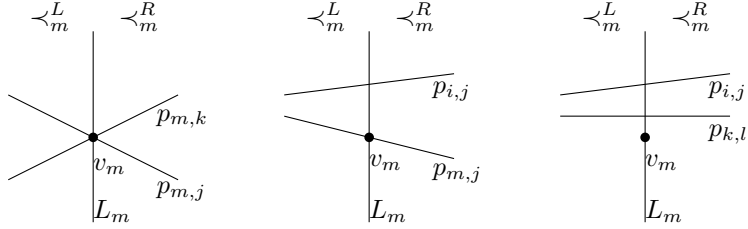}
  \caption{Illustration of the binding conditions for the orders $\prec_m^L$ and $\prec_m^R$.}
 \label{fig_def_usp_u_vrcholu}
 \end{center}
\end{figure}

We represent the order of intersections of the pseudolines with a vertical line by the order of the corresponding pseudolines. 
For $m\in [n]$, we define two total orders $\prec_m^L$ and $\prec_m^R$ on the set 
$\mathcal{P}=\{p_{i,j}; 1\le i<j\le n\}$ of all pseudolines. The order $\prec_m^L$ ($\prec_m^R$) represents the vertical order of the pseudolines in the left (right, respectively) neighborhood of $L_m$. See Figure~\ref{fig_def_usp_u_vrcholu}. We require the two orders $\prec_m^L$ and $\prec_m^R$ to be mutually inverse on the set $\mathcal{P}_m=\{p_{i,j}; 1\le i<j\le n, m\in\{i,j\}\}$ of pseudolines containing $v_m$ and identical otherwise, that is, 

\begin{itemize}

\item $p_{m,k}\prec_m^L p_{m,j}$ if and only if $p_{m,j}\prec_m^R p_{m,k}$, for all $j,k \in [n]$ such that $j,k,m$ are distinct, and
\item $p_{i,j}\prec_m^L p_{k,l}$ if and only if $p_{i,j}\prec_m^R p_{k,l}$, for all $i,j,k,l \in [n]$ such that $i<j$, $k<l$, $\{i,j\}\neq\{k,l\}$, and $\{i,j\}\cap\{k,l\}\cap\{m\}=\emptyset$.

\end{itemize}

We also define a total order $\prec_0$ on $\mathcal{P}$ as follows:
\begin{itemize}

\item  $\prec_0\equiv(\prec_n^R)^{-1}$.

\end{itemize}

That is, 
$\prec_0$ is the inverse of $\prec_n^R$. The objective here is to 
make every two pseudolines cross an odd number of times (in particular, at least once).

Further conditions on the orders $\prec_m^L$ and $\prec_m^R$ are determined by the signature function $\sigma$; see Figure~\ref{fig_podminky_nad_pod_orders}. 
For $i \in [n]$, we fix the orders $\prec_i^L$ and $\prec_i^R$ on $\mathcal{P}_i$ as total orders in the following way. For all $j,k\in [n]$ such that $i\neq j<k\neq i$,
\begin{itemize}

\item if $i<j<k$, then $p_{i,j}\prec_i^R p_{i,k}$ if $\sigma(i,j,k)=+$ and $p_{i,k}\prec_i^R p_{i,j}$ if $\sigma(i,j,k)=-$,
\item if $j<k<i$, then $p_{i,j}\prec_i^R p_{i,k}$ if $\sigma(j,k,i)=+$ and $p_{i,k}\prec_i^R p_{i,j}$ if $\sigma(j,k,i)=-$, 
\item if $j<i<k$, then $p_{i,j}\prec_i^R p_{i,k}$ if $\sigma(j,i,k)=+$ and $p_{i,k}\prec_i^R p_{i,j}$ if $\sigma(j,i,k)=-$.

\end{itemize}

\begin{figure}
 \begin{center}
   \includegraphics[scale=1]{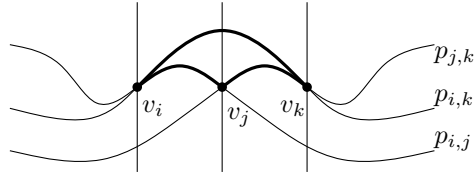}
  \caption{Three pseudolines determined by $v_i,v_j$ and $v_k$.}
 \label{fig_podminky_nad_pod_orders}
 \end{center}
\end{figure}

To show that $\prec_i^R$ and $\prec_i^L$ are total orders on $\mathcal{P}_i$, we need to verify the transitivity of $\prec_i^R$. Suppose for contrary that $p_{i,j}\prec_i^R p_{i,k} \prec_i^R p_{i,l} \prec_i^R p_{i,j}$ for some $j,k,l \in [n]\setminus\{i\}$. We may assume that $j<k,l$. 
In the following table we list the eight cases of $\sigma(\{i,j,k,l\})$ according to the relative order of $i,j,k,l$. The symbol $\xi$ stands for a sign that is not determined.

\begin{center}
\begin{tabular}{cc|cc}
order & $\sigma(\{i,j,k,l\})$ & order & $\sigma(\{i,j,k,l\})$\\
\hline
$i<j<k<l$ & ${+}{-}{+}\xi$  &  $i<j<l<k$ & ${-}{+}{-}\xi$\\
$j<k<l<i$ & $\xi{+}{-}{+}$  &  $j<l<k<i$ & $\xi{-}{+}{-}$\\
$j<i<k<l$ & ${+}{-}\xi{+}$  &  $j<i<l<k$ & ${-}{+}\xi{-}$\\
$j<k<i<l$ & ${+}\xi{-}{+}$  &  $j<l<i<k$ & ${-}\xi{+}{-}$\\
\end{tabular}
\end{center}

It follows that in every relative ordering, the indices $i,j,k,l$ form a forbidden 4-tuple. Therefore, both $\prec_i^R$ and $\prec_i^L$ are transitive on $\mathcal{P}_i$.

For every $i,j,k \in [n]$ such that $i\neq j<k\neq i$ and for every $p \in \mathcal{P}_i$, we also fix the following conditions:
\begin{itemize}
  \item if $i<j<k$, then $p\prec_i^R p_{j,k}$ if $\sigma(i,j,k)=-$ and $p_{j,k} \prec_i^R p$ if $\sigma(i,j,k)=+$,
  \item if $j<k<i$, then $p\prec_i^R p_{j,k}$ if $\sigma(j,k,i)=-$ and $p_{j,k} \prec_i^R p$ if $\sigma(j,k,i)=+$,
  \item if $j<i<k$, then $p\prec_i^R p_{j,k}$ if $\sigma(j,i,k)=+$ and $p_{j,k} \prec_i^R p$ if $\sigma(j,i,k)=-$.
\end{itemize}
These conditions represent the above/below relations of the pseudolines $p_{j,k}$ and the points $v_i$ implied by $\sigma$ (see Figure~\ref{fig:sigma}).

It is easy to see that all the conditions required so far for the orders $\prec_m^L, \prec_m^R$ and $\prec_0$ can be simultaneously satisfied. 
For example, for crossings of $L_m$, $m\in [n]$, with the pseudolines disjoint with $v_m$, the conditions only 
specify a partition of these pseudolines into two subsets: those crossing $L_m$ below $v_m$ and those crossing $L_m$ above $v_m$.

Finally, we choose total orders $\prec_m^L, \prec_m^R$ and $\prec_0$ on $\mathcal{P}$ satisfying all the required conditions and such that the total number of crossings of the pseudolines is minimized. Combinatorially, this last condition is equivalent to minimizing the total number of inversions between pairs of permutations corresponding to $\prec_i^R$ and $\prec_{i+1}^L$, for all $i\in [n-1]$, and the pair of permutations corresponding to $(\prec_n^R)^{-1}$ and $\prec_1^L$.

Let $A$ be an arrangement of piecewise linear curves $p_{i,j}$ constructed from the total orders $\prec_m^L, \prec_m^R$ and $\prec_0$. 
Assume that no three curves from $A$ cross at the same point, except for the points $v_1, v_2, \dots, v_n$.
We show that every two curves in $A$ cross exactly once and thus deserve to be referred to as pseudolines.

Let $e,f$ be two $x$-monotone curves from the arrangement $A$. A {\em bigon\/} $B$ {\em formed by $e$ and $f$} is a closed topological disc bounded by two simple arcs $e', f'$ that have common endpoints and disjoint relative interiors, and such that $e'$ is a portion of $e$ and $f'$ is a portion of $f$. The common endpoints of $e'$ and $f'$ are the {\em vertices\/} of $B$. 

It will be convenient to consider $A$ as an arrangement of curves on the M\"obius strip obtained from the infinite rectangle $\{(x,y)\in \mathbb{R}^2; 0\le x\le n+\varepsilon, y\in \mathbb{R}\}$ by identifying each point $(0,y)$, $y\in \mathbb{R}$, with the point $(n+\varepsilon,-y)$. 
We extend the notion of a bigon to include also {\em special bigons\/} that are bounded by portions of two curves from $A$ in the M\"obius strip and intersect the line $L_0$. A bigon that does not intersect $L_0$ is an {\em ordinary bigon}.
Observe that if two curves $e$ and $f$ cross $k$ times, then $e$ and $f$ form exactly $k-1$ ordinary bigons and one special bigon.

A bigon $B$ is {\em empty\/} if $B \cap \{v_1, v_2, \dots, v_n\}=\emptyset$. 
A bigon $B$ is {\em smooth\/} if the boundary of $B$ does not intersect $\{v_1, v_2, \dots, v_n\}$. See Figure~\ref{fig_bigony}.

\begin{figure}
 \begin{center}
   \includegraphics[scale=1]{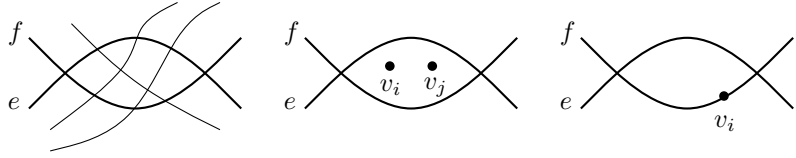}
  \caption{Bigons formed by curves $e,f$. Left: a minimal empty bigon. Middle: a smooth bigon. Right: a bigon that is neither smooth nor empty.}
 \label{fig_bigony}
 \end{center}
\end{figure}

\begin{claim}\label{claim_empty_bigon}
No two curves from $A$ form an empty bigon.
\end{claim}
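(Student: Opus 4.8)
The plan is to derive a contradiction from the fact that the orders $\prec_m^L,\prec_m^R$ and $\prec_0$ were chosen so that the total number of crossings of the curves is minimized. Suppose that some pair of curves forms an empty bigon. Since the arrangement has finitely many crossings, it contains finitely many bigons, so I can choose an empty bigon $B$, formed by two curves $e,f$, that is minimal with respect to inclusion. It is convenient to work in the M\"obius strip representation, so that ordinary and special bigons are handled uniformly. Let $P$ and $Q$ be the two vertices (crossings of $e$ and $f$) bounding $B$, and let $e'$ and $f'$ be its two boundary arcs. Because $B$ is empty, no vertex lies in $B$; in particular, the endpoints of $e$ and of $f$ cannot lie in the $x$-range strictly between $P$ and $Q$ (an endpoint there would put a vertex on $e'$ or $f'$), so $e'$ and $f'$ are vertex-free and $P,Q$ are consecutive crossings of $e$ and $f$. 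The same observation shows that for every vertex $v_i$ incident to $e$ or $f$, the line $L_i$ lies outside the $x$-range of $B$.

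First I would show that every other curve meets $B$ only in transversal arcs. Let $a$ be a maximal sub-arc of some curve $g\neq e,f$ contained in $B$; both endpoints of $a$ lie on $\partial B=e'\cup f'$. If both endpoints lay on $e'$, then $a$ together with the portion of $e'$ between them would bound a bigon strictly inside $B$, which is again empty and strictly smaller, contradicting the minimality of $B$; the case of two endpoints on $f'$ is symmetric. Hence every such sub-arc joins a point of $e'$ to a point of $f'$, crossing $e'$ exactly once and $f'$ exactly once.

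Next comes the reduction. After possibly swapping the names of $e$ and $f$, I may assume that $e$ runs above $f$ strictly between $P$ and $Q$ and below $f$ just outside this range (forced since $P$ and $Q$ are crossings). I would reroute the portion of $e$ over the $x$-range of $B$ so that it runs just below $f'$, keeping it strictly below $f$ throughout the range and above every vertex that happens to lie below $f'$. This deletes exactly the two crossings $P$ and $Q$ between $e$ and $f$ and introduces no new crossings: by the previous step every curve meeting $B$ crosses both $e'$ and $f'$ along each of its transversal arcs, so after the reroute each such arc still meets $e$ exactly once, now just below $f'$ rather than on $e'$. Thus the total number of crossings drops by exactly two.

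Finally I would check that the modified drawing is still encoded by admissible orders $\prec_m^L,\prec_m^R,\prec_0$, which contradicts their choice as crossing-minimizing. The reroute moves only $e$, and only strictly between consecutive vertices, so the orders $\prec_i^L,\prec_i^R$ on each $\mathcal{P}_i$ are untouched (every $L_i$ meeting $e$ lies outside the $x$-range of $B$), and every above/below relation between a pseudoline and a vertex is preserved: an intermediate vertex $v_c$ lies outside the empty bigon, hence either strictly above $e'$, and then above the lowered $e$ as well, or strictly below $f'$, and then below the lowered $e$, which I keep just under $f'$. The only quantities the reroute changes are relative vertical orders of pairs of pseudolines between vertices, which is exactly the freedom the minimization ranges over. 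Hence the new orders satisfy all required conditions yet have strictly fewer inversions, the desired contradiction. The step I expect to be the most delicate is precisely this last verification, namely that the local reroute respects every above/below constraint dictated by $\sigma$ while strictly lowering the crossing count; the transversality step is comparatively routine once the minimal bigon is fixed.
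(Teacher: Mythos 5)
Your proof is correct and follows essentially the same route as the paper: take an inclusion-minimal empty bigon, observe that every other curve must cross it transversally (meeting each boundary arc exactly once), reroute $e$ along the far side of $f'$ to remove two crossings while preserving all neighborhoods of the $v_i$ and all above/below constraints, and contradict the minimality of the chosen orders. Your verification of the constraint preservation is in fact more explicit than the paper's one-line justification, but the underlying argument is identical.
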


\begin{proof}
Suppose that $e$ and $f$ are two curves from $A$ that form an empty bigon $B$. Let $e' \subset e$ and $f' \subset f$ be the two arcs forming the boundary of $B$.
Suppose further that $B$ is inclusion minimal, among all pairs of pseudolines. Moreover, we may suppose that both $e'$ and $f'$ are inclusion minimal among all arcs forming the bottom or the top boundary of some bigon. Then every curve $g$ from $A$ distinct from $e$ and $f$ is either disjoint with $B$ or crosses both $e'$ and $f'$ exactly once.
We can thus redraw $e$ along $f'$ outside $B$ and decrease the total number of crossings by two. After this operation, the resulting arrangement still satisfies all the conditions specified by the orders $\prec_m^L, \prec_m^R$ and $\prec_0$, as the neighborhoods of the points $v_i$ and all the above/below relations of the pseudolines and points $v_i$ remain unaffected.
\end{proof}

\begin{corollary}\label{cor_smooth_bigon}
Every smooth bigon formed by two curves from $A$ contains at least one point $v_m$ in its interior.
\end{corollary}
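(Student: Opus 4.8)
The plan is to obtain Corollary~\ref{cor_smooth_bigon} directly from Claim~\ref{claim_empty_bigon} by a short contrapositive argument, exploiting the fact that the definitions of \emph{smooth} and \emph{empty} bigons differ only in whether vertices are forbidden in the interior. First I would recall the two definitions side by side: a bigon $B$ is smooth when its boundary avoids $\{v_1,v_2,\dots,v_n\}$, whereas $B$ is empty when the entire closed disc $B$ avoids $\{v_1,v_2,\dots,v_n\}$. Thus ``empty'' is strictly stronger than ``smooth'': an empty bigon is in particular smooth, and a smooth bigon fails to be empty exactly when it contains a vertex in its interior.

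The key step is the contradiction argument. Suppose for contrary that some smooth bigon $B$, formed by two curves $e,f$ from $A$, contains no point $v_m$ in its interior. Since $B$ is smooth, no $v_m$ lies on the boundary of $B$ either. Combining these two facts, the closed disc $B$ contains no vertex at all, so $B\cap\{v_1,v_2,\dots,v_n\}=\emptyset$; that is, $B$ is an empty bigon. This contradicts Claim~\ref{claim_empty_bigon}, which asserts that no two curves from $A$ form an empty bigon. Hence every smooth bigon must contain at least one vertex $v_m$ in its interior.

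I do not expect any real obstacle here, as the statement is essentially an immediate logical consequence of Claim~\ref{claim_empty_bigon} once the definitions are unwound. The only point that requires care is the boundary/interior bookkeeping: smoothness is a condition on the boundary of $B$ only, while emptiness is a condition on the whole closed disc, so the corollary precisely fills the interior gap left open by the definition of smoothness. As long as one keeps this distinction straight, the proof is a one-line reduction to the already-established claim.
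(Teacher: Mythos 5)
Your proof is correct and is exactly the argument the paper intends: the paper states this as an immediate corollary of Claim~\ref{claim_empty_bigon} without further proof, since a smooth bigon with no vertex in its interior would have no vertex in the closed disc at all and hence would be empty. The definition-unwinding you spell out (smoothness controls the boundary, emptiness the whole closed disc) is precisely the implicit one-line reduction.
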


For $i,j\in[n]$, $i\neq j$, let $e_{i,j}$ be the portion of the curve $p_{i,j}$ between the points $v_i$ and $v_j$, representing the edge $v_iv_j$ of $K_n$.

\begin{claim}\label{claim_sousedni}
Every two curves from $A$ sharing a point $v_i$ cross only at $v_i$.
\end{claim}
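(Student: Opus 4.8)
The plan is to show that two curves $p_{i,j}$ and $p_{i,k}$ sharing the vertex $v_i$ cross only at $v_i$. By Corollary~\ref{cor_smooth_bigon}, every smooth bigon formed by two curves from $A$ contains some point $v_m$ in its interior, and by Claim~\ref{claim_empty_bigon} no two curves form an empty bigon at all. So the strategy is to argue that any crossing of $p_{i,j}$ and $p_{i,k}$ other than at $v_i$ would create a bigon that is forced to be empty, contradicting these results. Since both curves pass through $v_i$, the point $v_i$ itself is a common point, and the two curves leave $v_i$ in a definite vertical order determined by $\prec_i^L$ and $\prec_i^R$; the key point is that these two orders are mutually inverse on $\mathcal{P}_i$, so $p_{i,j}$ and $p_{i,k}$ swap their vertical order as they pass through $v_i$.

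First I would observe that if $p_{i,j}$ and $p_{i,k}$ had a crossing at some point $z \neq v_i$, then the portions of the two curves between $v_i$ and $z$ (or between $z$ and $v_i$, depending on which side of $L_i$ the crossing lies) bound a region $B$. Because the two curves are $x$-monotone, this region is bounded by two simple arcs with exactly the two common endpoints $v_i$ and $z$ and disjoint interiors, so $B$ is a bigon with one vertex at $v_i$ and one at $z$. Next I would argue that this bigon can be taken to be smooth after perhaps passing to an inclusion-minimal sub-bigon: the only vertex on the boundary of $B$ is $v_i$, and by the construction the curves meet $v_i$ cleanly, so a slight local modification of the picture near $v_i$ (or directly choosing the innermost crossing) yields a smooth bigon. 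Then Corollary~\ref{cor_smooth_bigon} forces a vertex $v_m$ into the interior of $B$.

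The main step is then to derive a contradiction from having a vertex $v_m$ inside $B$. The idea is that $v_m$ lies between the two curves $p_{i,j}$ and $p_{i,k}$ in the strip between $L_i$ and $L_z$, so $v_m$ is on one side of $p_{i,j}$ and the opposite side of $p_{i,k}$; but the above/below conditions we imposed on $\prec_i^L,\prec_i^R$ (the conditions relating $p$ to $p_{j,k}$, and symmetrically the conditions placing $v_m$ relative to $p_{i,j}$ and $p_{i,k}$) encode precisely the signature values $\sigma(\cdot)$ of the triples involving $i,m,j$ and $i,m,k$. Tracking these signs, the position of $v_m$ relative to the two curves forces a specific combination of signatures on the triples $\{i,m,j\},\{i,m,k\},\{i,j,k\}$ (and the relevant $4$-tuple among $i,j,k,m$), which turns out to be one of the forbidden $4$-tuple forms excluded by the hypothesis of Theorem~\ref{theorem_classif_pseudolin}. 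This contradiction shows no such interior vertex can exist, hence $B$ cannot be smooth and nonempty, so no crossing $z \neq v_i$ exists.

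I expect the main obstacle to be the bookkeeping in the last step: carefully reading off, from the geometric position of the interior vertex $v_m$ relative to the two $x$-monotone curves and the line $L_i$, exactly which signature constraints are implied, and verifying that every possibility corresponds to a forbidden $4$-tuple. One must handle the cases according to whether the crossing $z$ lies to the left or to the right of $v_i$, and according to the relative $x$-order of $m$ with respect to $i$, $j$ and $k$; the mutual-inverse property of $\prec_i^L$ and $\prec_i^R$ on $\mathcal{P}_i$ is what guarantees that $v_m$ genuinely lies on opposite sides of the two curves, and hence that the forced signatures are contradictory rather than consistent.
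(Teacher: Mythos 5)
Your overall strategy---locate a bigon, force a vertex into its interior via Corollary~\ref{cor_smooth_bigon}, and read off a forbidden $4$-tuple---is the right general flavor, and it does match the paper's argument in the easiest configuration. But there is a genuine gap at your second step. The bigon $B$ with vertices $v_i$ and $z$ is \emph{not} smooth (its boundary meets $v_i$) and it is \emph{not} empty (it contains $v_i$), so neither Claim~\ref{claim_empty_bigon} nor Corollary~\ref{cor_smooth_bigon} applies to it, and no reduction makes them apply: there is no notion of an ``inclusion-minimal sub-bigon'' of a bigon formed by the same two curves, and any ``local modification near $v_i$'' either produces a different arrangement (to which the minimality argument underlying Claim~\ref{claim_empty_bigon} no longer applies) or violates the prescribed orders $\prec_i^L,\prec_i^R$, which force $p_{i,j}$ and $p_{i,k}$ to genuinely cross at $v_i$. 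This is not a removable technicality: the bigon at $v_i$ can legitimately have \emph{no} vertex in its interior. This happens exactly when the extra crossings lie on the pseudoline extensions rather than on the edges $e_{i,j}$, $e_{i,k}$; then $v_j$ and $v_k$ sit on the \emph{boundary} of $B$ (not in its interior, not at its vertices), and your intended contradiction never materializes. These are cases (iii) and (v) of the paper's proof, and your assumption that the interior vertex $v_m$ lies ``in the strip between $L_i$ and $L_z$'' with $i<m<j,k$ excludes them from the start.

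To close the gap one needs the ingredients your sketch never invokes: the parity property built into the construction via $\prec_0\equiv(\prec_n^R)^{-1}$ (every two curves cross an odd number of times, so ``more than once'' means at least three times), the count that $k$ crossings yield $k$ bigons on the M\"obius strip (including the special bigon through $L_0$), and the observation that at most two of these bigons have $v_i$ as a vertex---so some bigon away from $v_i$ is smooth and Corollary~\ref{cor_smooth_bigon} applies to \emph{that} one. The interior vertex $v_m$ then satisfies $m<i$ or $m>k$ in the problematic cases (the smooth bigon may even be the special one wrapping around the strip), and the forbidden $4$-tuples one extracts, such as $\sigma(m,i,j,k)={+}{-}{\xi}{+}$ or $\sigma(j,i,k,m)={+}{-}{\xi}{+}$, involve $m$ outside the interval $[i,k]$. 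Your case analysis, restricted to $v_m$ between $v_i$ and the crossing, cannot produce these. So the proposal as written fails precisely on the configurations that make this claim delicate, and the distinction between the edges $e_{i,j}$ and the full curves $p_{i,j}$, together with the special bigons, is the missing idea rather than mere bookkeeping.
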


\begin{proof}
Suppose that for some $j,k \in [n]\setminus \{i\}$, $j<k$, the curves $p_{i,j}$ and $p_{i,k}$ cross more than once.
By symmetry, we may assume that $i<k$ and $p_{i,j} \prec_i^R p_{i,k}$. If $j<i$, we may further assume that $e_{i,j}$ crosses $p_{i,k}$ at most as many times as $e_{i,k}$ crosses $p_{i,j}$.
We have five cases; see Figure~\ref{fig_pij_pik_cases}.

\begin{enumerate}[(i)]

\item $i<j$ and $e_{i,j}$ crosses $e_{i,k}$. By the definition of $\prec_i^R$, we have $\sigma(i,j,k)=+$. Consequently, $p_{i,k}$ crosses $L_j$ above $v_j$. This further implies that $e_{i,j}$ and $e_{i,k}$ cross at least twice and thus they form a smooth ordinary bigon $B$. Let $v_m$ be a point in the interior of $B$ guaranteed by Corollary~\ref{cor_smooth_bigon}. We have $i<m<j<k$, $\sigma(i,m,j)=+$ and $\sigma(i,m,k)=-$, which implies that $\sigma(i,m,j,k)={+}{-}{+}{\xi}$ for some $\xi \in \{-,+\}$.


\item $i<j$ and $e_{i,k}$ crosses $p_{i,j}$ but not $e_{i,j}$. Again, we have $\sigma(i,j,k)=+$. Consequently, $p_{i,k}$ crosses $L_j$ above $v_j$ and $p_{i,j}$ crosses $L_k$ below $v_k$. This further implies that $p_{i,j}$ and $e_{i,k}$ cross at least twice (not counting the point $v_i$) and thus they form a smooth ordinary bigon with a point $v_m$ in its interior. Taking the leftmost such bigon, we have $i<j<m<k$ and $\sigma(i,j,m,k)={-}{+}{-}{\xi}$.

\item $i<j$, $e_{i,j}$ does not cross $p_{i,k}$ and $e_{i,k}$ does not cross $p_{i,j}$. In this case all three points $v_i$, $v_j$ and $v_k$ are on the boundary of the same bigon $B$ formed by $p_{i,j}$ and $p_{i,k}$ and only $v_i$ is a vertex of $B$. Since $p_{i,j}$ and $p_{i,k}$ cross at least three times, they form at least three bigons. At most two of the bigons contain $v_i$, thus at least one of them, $B'$, is smooth and has a point $v_m$ in its interior. We may assume that $B'$ shares a vertex with $B$. Note that either of $B$ and $B'$ can be special, so we have two cases: $m>k$ or $m<i$. In the first case we have $\sigma(i,j,k,m)={+}{-}{+}{\xi}$, in the second case we have $\sigma(m,i,j,k)={+}{-}{\xi}{+}$.

\item $j<i$ and $e_{i,k}$ crosses $p_{i,j}$. Since $v_k$ lies above $p_{i,j}$, the curves $e_{i,k}$ and $p_{i,j}$ cross at least twice and thus form a smooth bigon, containing a point $v_m$ in its interior. We have $j<i<m<k$ and $\sigma(j,i,m,k)={-}{+}{\xi}{-}$.

\item $j<i$, $e_{i,j}$ does not cross $p_{i,k}$ and $e_{i,k}$ does not cross $p_{i,j}$. The curves $p_{i,j}$ and $p_{i,k}$ form at least three bigons, but only two of them, $B_1$ and $B_2$, contain $v_i$. Let $B$ be the bigon other than $B_1$ sharing a vertex with $B_2$. By the assumptions, $B\cap \{v_i,v_j,v_k\}=\emptyset$, so $B$ is smooth and contains some point $v_m$. We have either $m<j<i<k$ with $\sigma(m,j,i,k)={+}{\xi}{-}{+}$, or $j<i<k<m$ with $\sigma(j,i,k,m)={+}{-}{\xi}{+}$.
\end{enumerate}

\begin{figure}
 \begin{center}
   \includegraphics[scale=1]{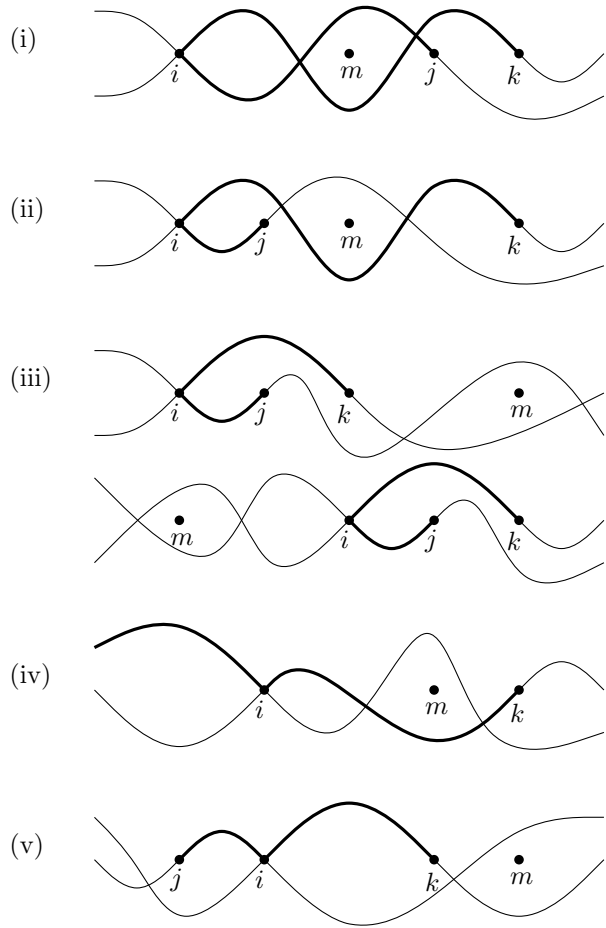}
  \caption{Bigons with vertex at $v_i$ are forbidden.}
 \label{fig_pij_pik_cases}
 \end{center}
\end{figure}

In every case there is a forbidden $4$-tuple, which is a contradiction.
\end{proof}

\begin{claim}\label{claim_nezavisle}
Let $i,j,k,l\in[n]$ such that $|\{i,j,k,l\}|=4$. Then $p_{i,j}$ and $p_{k,l}$ do not form a smooth bigon.
\end{claim}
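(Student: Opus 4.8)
The plan is to argue by contradiction, mimicking the strategy of Claim~\ref{claim_sousedni} but now for two independent pseudolines $p_{i,j}$ and $p_{k,l}$ on four distinct vertices. Suppose such a smooth bigon $B$ exists. By Corollary~\ref{cor_smooth_bigon}, $B$ contains some vertex $v_m$ in its interior. Since $B$ is bounded by a portion of $p_{i,j}$ and a portion of $p_{k,l}$, and these two pseudolines cross at the two vertices of $B$, I would read off the above/below relations of $v_m$ with respect to both pseudolines. The key point is that $v_m$ lies strictly between the two crossing points of $p_{i,j}$ and $p_{k,l}$ in the $x$-direction, so its $x$-coordinate satisfies $i,k < m < j,l$ (after possibly relabeling); and $v_m$ being inside the bigon means it lies on opposite sides of the two curves in a controlled way.

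First I would set up the inequalities on the $x$-coordinates: the two vertices of a smooth ordinary bigon are genuine crossings of the two curves, occurring at some $x$-values, and $v_m$ sits between them, so $\max(i,k) < m < \min(j,l)$, with the understanding that if $B$ is the special bigon the inequalities wrap around via the M\"obius identification (giving the analogous ``outer'' case $m < \min(i,k)$ or $m > \max(j,l)$). Then I would translate the statement ``$v_m$ is inside $B$'' into signature constraints. The curve $p_{i,j}$ determines, through the above/below conditions fixed earlier in the construction (the second block of bulleted conditions relating $p_{j,k}$-type pseudolines to points $v_i$), whether $v_m$ is above or below $p_{i,j}$; this is exactly $\sigma$ evaluated on the triple $\{i,j,m\}$. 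Likewise the position of $v_m$ relative to $p_{k,l}$ is $\sigma$ on $\{k,l,m\}$. The interior-of-bigon condition forces a specific incompatible combination of these two signs together with the sign on a triple coming from the relative order of the four original indices.

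The main obstacle, and the bulk of the work, is the case analysis over the relative linear orders of $i,j,k,l,m$. Unlike Claim~\ref{claim_sousedni}, here all four indices $i,j,k,l$ are distinct and independent, so there are several interleavings of $\{i,k\}$ on the left and $\{j,l\}$ on the right, each further split by where $m$ falls and by whether $B$ is ordinary or special. In each configuration I would exhibit an explicit ordered $4$-tuple among $\{i,j,k,l,m\}$ whose signature is forced to be one of the non-monotone forms such as ${+}{-}{+}{\xi}$ or ${-}{+}{-}{\xi}$, i.e.\ a forbidden $4$-tuple under Theorem~\ref{theorem_classif_pseudolin}. The reason such a $4$-tuple must appear is the same geometric mechanism as before: the presence of $v_m$ strictly inside a bigon means that along some vertical sweep the two curves must reverse their relative vertical order while $v_m$ lies between them, and a reversal of vertical order of two curves passing around an interior point is precisely what a non-monotone signature encodes.

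I expect that, just as in Claim~\ref{claim_sousedni}, the many cases collapse by symmetry: reflecting the picture left-right swaps the roles of $\{i,k\}$ and $\{j,l\}$, and the M\"obius identification relates the special bigon to the ordinary one, so it should suffice to verify one or two representative configurations and invoke symmetry for the rest. The cleanest way to finish is to observe that the two bounding arcs $e',f'$ of $B$ cross exactly twice (at the vertices of $B$) while enclosing $v_m$, which yields a triple of consistent sign equations $\sigma(i,m,j)$, $\sigma(k,m,l)$, and a connecting sign forced by the $x$-order of the four vertices; combining them produces the forbidden pattern and hence the contradiction, completing the proof that no such smooth bigon can exist.
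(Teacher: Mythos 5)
Your approach has a genuine gap: the existence of a smooth bigon between two \emph{independent} pseudolines is not detectable from the signature function on $\{i,j,k,l,m\}$, so no case analysis of the kind you describe can produce a forbidden $4$-tuple. Concretely, the only signature information that ``$v_m$ lies inside $B$'' gives you is the position of $v_m$ relative to each of the two curves, i.e.\ the two signs $\sigma(\{i,m,j\})$ and $\sigma(\{k,m,l\})$; these two triples share only the index $m$, so they cannot be assembled into any $4$-tuple pattern, and no further signs are forced by the hypothesis. Indeed, the sign combination ``$v_m$ below $p_{i,j}$ and above $p_{k,l}$'' is realized by straight-line drawings (take $v_i,v_j$ high, $v_k,v_l$ low, and $v_m$ between those heights), where the signature is monotone on every $4$-tuple and no bigon exists at all; so this combination of signs is perfectly compatible with the allowed forms. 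This is exactly where your analogy with Claim~\ref{claim_sousedni} breaks down: there the two curves share the vertex $v_i$, so the order $\prec_i^R$ supplies a third sign $\sigma(\{i,j,k\})$ on a triple overlapping each of the other two triples in \emph{two} indices, and the three signs close up into a $4$-tuple such as $(i,m,j,k)$ of form ${+}{-}{+}{\xi}$. For independent curves there is no shared vertex and hence no linking sign; your proposal never names the four indices of the alleged forbidden tuple, and none exists. (Your statement that ``a reversal of the vertical order of two curves around an interior point is precisely what a non-monotone signature encodes'' is true only when the curves share an endpoint; the vertical order of two independent pseudolines at a given $x$-coordinate is simply not recorded by $\sigma$.)

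The missing idea is to bring \emph{other} curves of the arrangement into play, namely the pseudolines joining the interior vertex $v_m$ to the endpoints of the boundary curves. The paper argues as follows: by Claim~\ref{claim_sousedni}, $p_{i,m}$ crosses $p_{i,j}$ only at $v_i$, which lies outside the smooth bigon $B$, so $p_{i,m}$ can enter and exit $B$ only through the arc of $p_{k,l}$; symmetrically, $p_{k,m}$ enters and exits $B$ only through the arc of $p_{i,j}$. Since both curves pass through $v_m$ in the interior of $B$ and leave $B$ through opposite sides, they are forced to cross at least twice, contradicting Claim~\ref{claim_sousedni} applied to the pair $p_{i,m},p_{k,m}$, which share the vertex $v_m$. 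Note also a smaller inaccuracy in your setup: the vertices of the bigon are crossings of two bi-infinite pseudolines, so they need not lie in the $x$-interval $\left(\max(i,k),\min(j,l)\right)$, and hence neither need $m$; the paper's argument is insensitive to where $m$ falls, whereas your case analysis would have to handle all these positions without any tool that could close the cases.
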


\begin{proof} 
Let $B$ be a smooth bigon, containing a point $v_m$ in its interior. By Claim~\ref{claim_sousedni}, we may asume that $p_{i,m}$ and $p_{i,j}$ cross at most once. Therefore, $p_{i,m}$ enters and exits the bigon $B$ through $p_{k,l}$, perhaps more than once; see Figure~\ref{fig_pij_pkl_smooth}. Similarly, $p_{k,m}$ enters and exits the bigon $B$ through $p_{i,j}$. Since $p_{i,m}$ and $p_{k,m}$ cross at $v_m$ and they enter and exit $B$ through opposite sides, they cross at least twice. This is in contradiction with Claim~\ref{claim_sousedni}.
\end{proof}

\begin{figure}
 \begin{center}
   \includegraphics[scale=1]{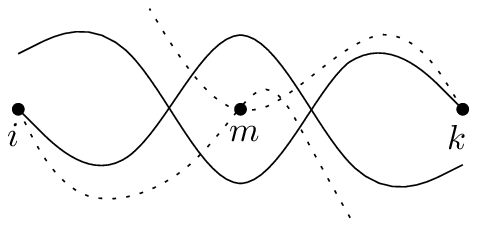}
  \caption{Smooth bigons are also forbidden.}
 \label{fig_pij_pkl_smooth}
 \end{center}
\end{figure}

If two curves from $A$ cross more than once, then by Corollary~\ref{cor_smooth_bigon}, Claim~\ref{claim_sousedni} and Claim~\ref{claim_nezavisle}, they are of the form $p_{i,j}$ and $p_{k,l}$ with $i,j,k,l$ distinct, and they form no smooth bigon. Therefore, every bigon formed by $p_{i,j}$ and $p_{k,l}$ contains at least one of the points $v_i,v_j,v_k,v_l$ on its boundary, but not at the vertices. In particular, $p_{i,j}$ and $p_{k,l}$ form exactly three bigons. Suppose that $v_i$ and $v_k$ are on the boundary of two different bigons. Up to symmetry, we are in one of the cases depicted in Figure~\ref{fig_pij_pkl_nonsmooth} (considering all symmetries of the annulus, there is just one case). The curve $p_{i,k}$ has to cross $p_{i,j}$ or $p_{k,l}$ at least twice, which contradicts Claim~\ref{claim_sousedni}. Similar argument for pairs $v_i,v_l$ and $v_j,v_k$ implies that all four points $v_i,v_j,v_k,v_l$ are on the boundary of the same bigon. Therefore at least two bigons are smooth, which contradicts Claim~\ref{claim_nezavisle}. This finishes the proof of the theorem.

\begin{figure}
 \begin{center}
   \includegraphics[scale=1]{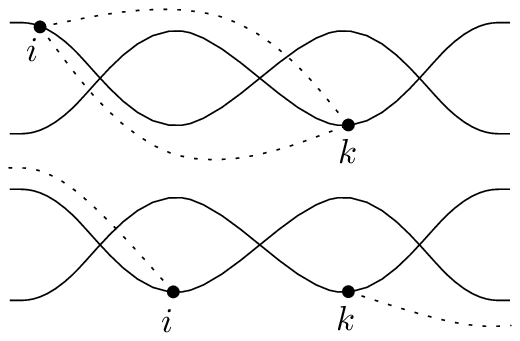}
  \caption{The curve $p_{ik}$ has to cross $p_{i,j}$ or $p_{k,l}$ in some other point than $v_i$ or $v_k$.}
 \label{fig_pij_pkl_nonsmooth}
 \end{center}
\end{figure}

\subsection{A remark on rectilinear drawings}

A similar characterization of {\em rectilinear\/} drawings of $K_n$ (equivalently, order types of planar point sets in general position) in terms of signature functions or CC systems with a finite number of forbidden configurations is impossible: for example, Bokowski and Sturmfels\mycite{BS89_infinite} constructed infinitely many minimal CC systems (simplicial affine 3-chirotopes) that are not realizable as sets of points in the plane. This and related results were also referred to by the phrase ``missing axiom for chirotopes is lost forever''.

Moreover, recognizing signature functions of rectilinear drawings of $K_n$ (or, order types of planar point sets in general position), is polynomially equivalent to rectilinear realizability of complete abstract topological graphs and to stretchability of pseudoline arrangements\mycommal\mycite{K11_simple}\mycommar which is polynomially equivalent to the existential theory of the reals\mydotl\mycite{Mnev88_universality}\mydotr In the terminology introduced by Schaefer\mycommal\mycite{Schaefer09_complexity}\mycommar these problems are $\exists\mathbb{R}$-complete. It is known that $\exists\mathbb{R}$-complete problems are in PSPACE\mycite{C88_some} and NP-hard, but they are not known to be in NP.


\subsection{Crossing minimal $x$-monotone drawings}

Note that in a simple $x$-monotone drawing of $K_n$, the crossings appear only between edges whose endpoints induce a 4-tuple of one of the forms ${+}{+}{+}{+}$, ${-}{-}{-}{-}$, ${+}{+}{-}{-}$, ${-}{-}{+}{+}$, ${-}{+}{+}{-}$, ${+}{-}{-}{+}$. Analogously as for the rectilinear drawings of $K_n$, we may call these 4-tuples {\em convex}. Then, for a simple $x$-monotone drawing $D$ of $K_n$ the crossing number of $D$ equals the number of convex 4-tuples.
%
A similar notion of convexity for general $k$-tuples was used by Peters and Szekeres\mydotl\mycite{SP06_computer_17}\mydotr 

This description of crossings is convenient for computer calculations. Using it, we have obtained a complete list of optimal $x$-monotone drawings of $K_n$ for $n\le 10$. To enumerate ``essentially different'' drawings we used the following approach.

Let $D$ be an $x$-monotone drawing of $K_n$ which induces a signature function $\sigma$. We can assume that the vertices are points placed on the same horizontal line (the $x$-axis).
The following operations on $D$ and $\sigma$ produce a signature function $\sigma'$ of a simple monotone drawing $D'$ that is homeomorphic to $D$ on the sphere, by a homeomorphism that does not necessarily preserve the labels of vertices. In some cases we just describe the transformation of the drawing; the new signature function $\sigma'$ can be then computed in a straightforward way.

\begin{enumerate}[(a)]
\item {\em Vertical reflection}: setting $\sigma'(i,j,k)=\overline{\sigma(i,j,k)}$ for every $(i,j,k) \in T_n$.

\item {\em Horizontal reflection}: setting $\sigma'(i,j,k)=\sigma(n+1-k,n+1-j,n+1-i)$ for every $(i,j,k) \in T_n$.

\item {\em Shifting $v_1$}: if every edge incident to $v_1$ lies completely above or completely below the $x$-axis, that is, $\sigma(1,i,k)=\sigma(1,j,k)$ for every $k \in \{3,\ldots,n\}$ and $1 < i,j < k$, then we can move $v_1$ to the position of $v_n$ and move every $v_{i+1}$ to the position of $v_i$, for every $1 \le i \le n-1$.

\item {\em Switching consecutive points}: let $j \in [n-1]$. If there is a $\xi \in \{-,+\}$ such that $\sigma(j,j+1,k) = \xi$ for every $j+1<k\le n$ and $\sigma(i,j,j+1)=\overline{\xi}$ for every $1 \le i < j$, then we can switch the positions of $v_j$ and $v_{j+1}$. After the switch, we have $\sigma'(j,j+1,k) = \overline{\xi}$ for every $j+1<k\le n$ and $\sigma'(i,j,j+1)=\xi$ for every $1 \le i < j$. 

\item {\em Redrawing the edge $v_1v_n$}: in every crossing minimal $x$-monotone drawing, the edge $v_1v_n$ crosses no other edge, since we can always redraw this edge along the top or the bottom part of the boundary of the outer face. The signature function $\sigma$ thus satisfies $\sigma(1,i,n)=\xi$ for some $\xi\in\{{+},{-}\}$ and for every $i$, $1<i<n$. We may thus simultaneously change all the signatures $\sigma(1,i,n)$.

\end{enumerate}

We say that two $x$-monotone drawings $D$ and $D'$ are {\em switching equivalent}
if there is a sequence of operations (a)--(e) such that, when applied to $D$, we obtain a drawing which has the same signature function as $D'$. We have found representatives of all switching equivalence classes of crossing minimal $x$-monotone drawings of $K_n$, for $n \le 10$. Their numbers are given in Table~\ref{tab:nonequivalent}.

\'{A}brego {\it et al.}\mycite{abre12_2page} proved that for every even $n$, there is a unique crossing minimal $2$-page book drawing of $K_n$, up to a homeomorphism of the sphere. We have found crossing minimal $x$-monotone drawings of $K_8$ and $K_{10}$ that are not homeomorphic to $2$-page book drawings. There are exactly two such drawings of $K_8$; see Figure~\ref{fig_minimal_no2page}.
We do not have a construction of such drawings of $K_n$ for arbitrarily large $n$.

\begin{figure}
 \begin{center}
   \includegraphics[scale=0.8]{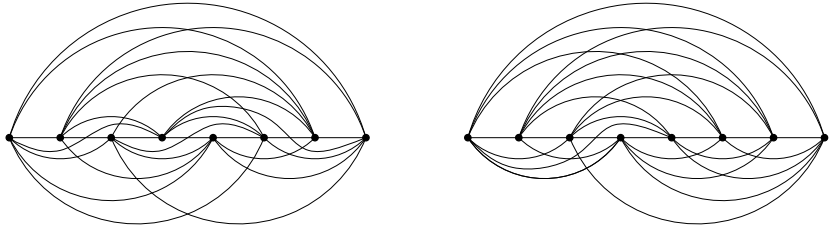}
  \caption{Left: a crossing minimal $x$-monotone drawing of $K_8$ homeomorphic to the cylindrical drawing. Right: a crossing minimal $x$-monotone drawing of $K_8$ that is not homeomorphic to a $2$-page book drawing and neither to the cylindrical drawing.}
 \label{fig_minimal_no2page}
 \end{center}
\end{figure}

\begin{table}
\begin{center}
\begin{tabular}{ L{3.7cm} | C{0.7cm} | C{0.7cm} | C{0.7cm} | C{0.7cm} | C{0.7cm} | C{0.7cm}}
Number of vertices & 5 & 6 & 7 & 8 & 9 & 10 \\ \hline
Number of drawings & 1 & 1 & 5 & 3 & 510 & 38 \\
\end{tabular}
\end{center}
\caption{Numbers of switching equivalence classes of crossing minimal $x$-monotone drawings of $K_n$ for $n \le 10$.}
\label{tab:nonequivalent}
\end{table}

\section{Weakly semisimple and shellable drawings}\label{section_+-}

Our proof of Theorem~\ref{theorem_1} for semisimple monotone drawings, as well as the earlier proof by \'Abrego {\em et al.}\mycommal\mycite[Theorem 1.1]{abre13_more}\mycommar do not use all properties of monotone drawings. Both rely only on the fact that the vertices of the drawing can be ordered as $v_1, v_2, \dots, v_n$ so that for every pair $i,j$ with $1\le i < j \le n$, the vertices $v_i$ and $v_j$ are on the outer face of the drawing induced by the interval of vertices $v_i, v_{i+1}, \dots, v_j$. Pedro Ramos\mycite{ram13_egc} introduced the term {\em shellable drawings\/} for these drawings of $K_n$. 
\'Abrego {\em et al.}\mycite{aamrs13_shellable} later observed that a still more general condition, $s$-shellability for some $s\ge n/2$, is sufficient, since the depth of the recursion in the proof is only $n/2$. A drawing of a complete graph with a vertex set $V$ is called {\em $s$-shellable\/} if there is a subset of vertices $v_1, v_2, \dots, v_s \in V$ such that for every pair $i,j$ with $1\le i < j \le s$, the vertices $v_i$ and $v_j$ are on the outer face of the drawing induced by $V\setminus \{v_1, v_2, \dots, v_{i-1}, v_{j+1}, v_{j+2}, \dots, v_s\}$.
In our version of this definition, we require $v_1$ and $v_s$ to be incident with the outer face; this is slightly more restrictive compared to the original definition in\mydotl\mycite{aamrs13_shellable}\mydotr
Informally speaking, $s$-shellable drawings consist of two parts: the first part is a shellable drawing of $K_s$, the second part is an arbitrary drawing of the remaining vertices and edges that does not block the shelling of the first part. If $s\ge 3$, this means, in particular, that all vertices from the second part ``see'' the vertices in the first part in the same cyclic order. The class of $s$-shellable drawings includes, for example, all drawings with a crossing-free cycle of length $s$, with at least one edge of the cycle incident with the outer face\mydotl\mycite{aamrs13_shellable}\mydotr
Note that the notions \emph{shellable} and \emph{$n$-shellable} coindide for drawings of $K_n$.

Following this notation, we call the sequence $v_1, v_2, \dots, v_n$ from the definition of a shellable drawing of $K_n$ a {\em shelling sequence\/} of the drawing, which is similar to the term {\em $s$-shelling\/} introduced by \'Abrego {\em et al.}\mycite{aamrs13_shellable}.

\'Abrego {\em et al.}\mycite{aamrs13_shellable} also considered the class of $x$-bounded drawings, which form a subclass of shellable drawings and generalize $x$-monotone drawings. A drawing of a graph is {\em $x$-bounded\/} if no two vertices share the same $x$-coordinate and every interior point of every edge $uv$ lies in the interior of the strip bounded by two vertical lines passing through the vertices $u$ and $v$. Fulek {\em et al.}\mycite{FPSS13_ht_monotone} showed that every $x$-bounded drawing $D$ can be transformed into an $x$-monotone drawing $D'$, while keeping the rotation system and  the parity of the number of crossings of every pair of edges fixed. This implies, in particular, that $\text{ocr}(D)=\text{ocr}(D')$. Also $D'$ is weakly semisimple if and only if $D$ is weakly semisimple. Therefore, the lower bound from Theorem~\ref{theorem_1} extends to all weakly semisimple $x$-bounded drawings of $K_n$.

It is not a priori clear that shellable drawings are essentially different from monotone or $x$-bounded drawings, since the conditions for shellability and $x$-boundedness are very similar at first sight. In Subsection~\ref{sub_shellable_not_monotone} we show that simple shellable drawings are indeed more general than simple monotone drawings, but the difference is rather subtle. By a somewhat detailed analysis, which we do not include here, it can be shown that every simple shellable drawing of $K_n$ can be decomposed into three monotone drawings, in a very specific way. 

Apart from following the proof of Theorem~\ref{theorem_1}, we may obtain a lower bound on the crossing number of shellable drawings of $K_n$ by the following straightforward reduction to the monotone crossing number of $K_n$, using the combinatorial characterization of $x$-monotone drawings.

\begin{proposition}\label{prop_shellable_semisimple}
Let $D$ be a semisimple shellable drawing of $K_n$. There is a semisimple $x$-monotone drawing $D'$ of $K_n$ with $\text{ocr}(D')=\text{ocr}(D)$.
\end{proposition}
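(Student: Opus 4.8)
The plan is to pass through the combinatorial characterization of Section~\ref{section3}. First I would assign to $D$ the signature function $\sigma\in\Sigma_n$ defined, exactly as for $x$-monotone drawings, by letting $\sigma(i,j,k)$ record the orientation of the topological triangle $v_iv_jv_k$ (say $+$ for counter-clockwise and $-$ for clockwise), where $v_1,v_2,\dots,v_n$ is a shelling sequence of $D$. If I can show that $\sigma$ contains no forbidden $4$-tuple, then Theorem~\ref{theorem_classif_semisimple} produces a semisimple $x$-monotone drawing $D'$ realizing $\sigma$, that is, having the same triangle orientations as $D$. The desired equality $\mathrm{ocr}(D')=\mathrm{ocr}(D)$ is then immediate: the side of any vertex with respect to any edge, and hence the number $E_k$ of $k$-edges and the quantities $E_{{\le}{\le}k}$, depend only on the orientations of triangles, so $E_{{\le}{\le}k}(D)=E_{{\le}{\le}k}(D')$ for every $k$. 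Since both $D$ and $D'$ are semisimple, Lemma~\ref{lemma2} expresses both odd crossing numbers by the same formula in the $E_{{\le}{\le}k}$, which forces them to agree.

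The whole difficulty thus reduces to the local statement that $\sigma$ avoids forbidden $4$-tuples, and this I would localize to $K_4$. Deleting vertices together with their incident edges only enlarges the unbounded face, so a vertex incident with the outer face of the drawing induced by a set $S$ remains incident with the outer face of the drawing induced by any subset of $S$ containing it. Applying the shelling condition of $D$ to the pair $(a,d)$ and restricting to $\{v_a,v_b,v_c,v_d\}$ therefore shows that $v_a$ and $v_d$ lie on the outer face of the induced drawing of $K_4$ (in fact this induced $K_4$ is again shellable with shelling sequence $v_a,v_b,v_c,v_d$). It thus suffices to prove the following claim: a semisimple drawing of $K_4$ on vertices $v_a,v_b,v_c,v_d$ in which $v_a$ and $v_d$ are incident with the outer face has its $4$-tuple in one of the ten allowed forms.

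To prove this claim I would use the classification of semisimple drawings of $K_4$ established in the proof of Lemma~\ref{lemma2}: such a drawing has $\mathrm{ocr}\le 1$, its number $E_1$ of $1$-edges equals $3$ when it is crossing-free and $2$ otherwise, and in the latter case the two $1$-edges form a perfect matching, whereas in the crossing-free case $D$ is a planar $K_4$ whose unique interior vertex is the common endpoint of its three $1$-edges. A short computation translates each of the six forbidden forms into the pattern of $1$-edges of the $4$-tuple, using that an edge $v_iv_j$ is a $1$-edge precisely when the two remaining vertices lie on opposite sides of it, a condition readable directly from $\sigma$. Two of the forbidden forms make all six edges $1$-edges, contradicting $E_1\le 3$, and so are realizable by no semisimple $K_4$ at all; each of the remaining four forces the three $1$-edges to form a star centred at $v_a$ or at $v_d$, that is, forces $v_a$ or $v_d$ to be the interior vertex of a crossing-free $K_4$ and hence not incident with the outer face, contradicting shellability. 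The main obstacle, and the only genuinely computational part, is this last bookkeeping step: correctly reading off the side of each vertex from the signs of the four triangles, and verifying, consistently with the chosen orientation convention, that every forbidden form lands in one of these two impossible situations. Everything else is structural and follows from results already proved.
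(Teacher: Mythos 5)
Your proposal is correct, and it shares the paper's overall skeleton: read off the order type $\sigma$ from a shelling sequence, check that $\sigma$ contains no forbidden $4$-tuple, realize $\sigma$ by a semisimple $x$-monotone drawing $D'$ via Theorem~\ref{theorem_classif_semisimple}, and compare odd crossing numbers. Both of your substeps, however, are carried out differently from the paper. For the forbidden $4$-tuples, the paper uses a one-line rotation argument: since $v_i$ lies on the outer face of the induced $K_4$, the counter-clockwise order of the three edges at $v_i$ starting from the outer face is linear, and a form such as ${+}{-}{+}{\xi}$ makes the pairwise ``comes before'' relations cyclic. You instead invoke the classification of semisimple drawings of $K_4$ from the claim inside the proof of Lemma~\ref{lemma2} and do sign bookkeeping on $1$-edges; your asserted outcome checks out: ${+}{-}{+}{-}$ and ${-}{+}{-}{+}$ force all six edges to be $1$-edges, contradicting $E_1\le 3$, while ${+}{-}{+}{+}$ and ${-}{+}{-}{-}$ force a star of $1$-edges centred at $v_a$, and ${+}{+}{-}{+}$ and ${-}{-}{+}{-}$ a star at $v_d$, i.e.\ a planar $K_4$ with $v_a$ resp.\ $v_d$ as the interior vertex. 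For the equality $\text{ocr}(D')=\text{ocr}(D)$, your route is genuinely slicker than the paper's: you observe that for semisimple drawings Lemma~\ref{lemma2} computes $\text{ocr}$ from the quantities $E_{{\le}{\le}k}$, which are invariants of the order type, so two semisimple drawings with the same order type automatically have equal odd crossing numbers. The paper instead classifies each $4$-tuple of a shellable semisimple $K_4$ as odd or even and matches these classes with those of $D'$ by a case analysis; this is longer, but it records finer, $4$-tuple-level information about where the odd crossings sit, which the paper later relies on (as the template for the weakly semisimple version, Proposition~\ref{prop_shellable_weakly_semisimple}, and in the lemma of Subsection~\ref{sub_shellable_not_monotone}, where a pairwise odd-crossing correspondence between $D$ and $D'$ is extracted from this very proof). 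For the proposition as stated, your count-level argument fully suffices.
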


We note that the drawing $D'$ obtained in Proposition~\ref{prop_shellable_semisimple} does not necessarily preserve the parity of the number of crossings between a given pair of edges. Moreover, it is also possible that for a simple shellable drawing $D$, we obtain a monotone drawing $D'$ where some pair of edges cross more than once; see Figure~\ref{fig_shellable_monotone_K5}.

Let $v_1, v_2, \dots, v_n$ be the vertices of a semisimple drawing $D$ of $K_n$. The {\em order type\/} of $D$ is the function $\sigma: {[n]\choose 3} \rightarrow \{+,-\}$ defined in the following way: for $1\le i<j<k\le n$, $\sigma(i,j,k)=+$ if the triangle $v_iv_jv_k$ is drawn counter-clockwise and $\sigma(i,j,k)=-$ if the triangle $v_iv_jv_k$ is drawn clockwise. This generalizes the definition of the signature function for semisimple monotone drawings. As in the previous section, we use the shortcut $\sigma(i,j,k,l)$ for the sequence of four signs $\sigma(i,j,k)\sigma(i,j,l)\sigma(i,k,l)\sigma(j,k,l)$.

\begin{figure}
 \begin{center}
   \includegraphics[scale=1]{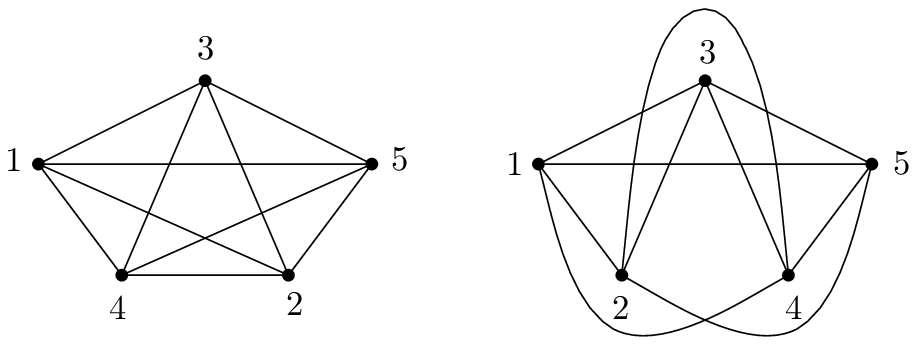}
  \caption{A simple shellable drawing (left) and the corresponding semisimple monotone drawing (right). Note that the left drawing is both shellable and monotone; however, its shelling sequence $1,2,3,4,5$ is not its monotone sequence.}
 \label{fig_shellable_monotone_K5}
 \end{center}
\end{figure}

\begin{proof}
Let $v_1, v_2, \dots, v_n$ be a shelling sequence of $D$. Let $\sigma$ be the order type of $D$.
We show that $\sigma$ satisfies the assumptions of Theorem~\ref{theorem_classif_semisimple}, and therefore can be realized by a semisimple monotone drawing. Let $v_i,v_j,v_k,v_l$ be a $4$-tuple of vertices with $1\le i<j<k<l\le n$. Then the drawing of $K_4$ induced by $v_i,v_j,v_k,v_l$ has $v_i$ and $v_l$ on its outer face. To verify the assumptions of Theorem~\ref{theorem_classif_semisimple}, it is sufficient to show that none of the cases $\sigma(i,j,k,l)={+}{-}{+}{\xi}$, $\sigma(i,j,k,l)={-}{+}{-}{\xi}$, $\sigma(i,j,k,l)={\xi}{+}{-}{+}$ or $\sigma(i,j,k,l)={\xi}{-}{+}{-}$, with $\xi\in\{+,-\}$, occurs. Suppose the contrary. Due to symmetry, we may suppose that $\sigma(i,j,k,l)={+}{-}{+}{\xi}$. This means that reading the linear counter-clockwise order of the edges incident with $v_i$ starting from the outer face, we encounter the edge $v_iv_j$ before the edge $v_iv_k$, $v_iv_k$ before $v_iv_l$, and $v_iv_l$ before $v_iv_j$; a contradiction.

Let $D'$ be a semisimple monotone drawing realizing $\sigma$. Every $4$-tuple of vertices in $D$ induces a drawing of $K_4$ with at most one pair of edges crossing oddly. This is clear if $D$ is simple; for semisimple drawings this is proved in the claim in the proof of Lemma~\ref{lemma2}. Call a $4$-tuple of vertices in $D$ or $D'$ {\em odd\/} if it induces exactly one pair of edges crossing oddly and {\em even\/} otherwise.
To finish the proof, it remains to show that odd (even) $4$-tuples of vertices in $D$ correspond to odd (even, respectively) $4$-tuples in $D'$.

Odd (also convex) $4$-tuples in $D'$ are of one of the forms ${+}{+}{+}{+}$, ${-}{-}{-}{-}$, ${+}{+}{-}{-}$, ${-}{-}{+}{+}$, ${-}{+}{+}{-}$, ${+}{-}{-}{+}$. Even $4$-tuples in $D'$ are of one of the forms ${+}{+}{+}{-}$, ${-}{+}{+}{+}$, ${-}{-}{-}{+}$, ${+}{-}{-}{-}$.

Let $v_i,v_j,v_k,v_l$, with $i<j<k<l$, be a $4$-tuple of vertices in $D$, inducing a drawing $H$ of $K_4$. By deforming the plane, we may assume that $v_i=(0,0)$, $v_l=(1,0)$, and that the vertices $v_j,v_k$ and the interiors of all six edges of $H$ lie in the interior of the strip between the vertical lines passing through $v_i$ and $v_l$. Note, however, that $H$ is not necessarily deformable to an $x$-bounded drawing with $v_j$ to the left of $v_k$: see Figure~\ref{fig_shellable_monotone_K4}, left.

Due to symmetry, we may assume that $\sigma(i,j,l)={+}$. That is, the vertex $v_j$ and the interiors of the edges $v_iv_j$ and $v_jv_l$ lie below the edge $v_iv_l$.
Now if $\sigma(i,k,l)={-}$, then the vertex $v_k$ and the interiors of the edges $v_iv_k$ and $v_kv_l$ lie above the edge $v_iv_l$. See Figure~\ref{fig_shellable_K4} a). Thus, the edges $v_iv_l$ and $v_jv_k$ are forced to cross an odd number of times, and no other pair of edges in $H$ cross. Also, the triangle $v_iv_jv_k$ is drawn counter-clockwise and the triangle $v_jv_kv_l$ clockwise, so we have $\sigma(i,j,k,l)={+}{+}{-}{-}$. Therefore, the $4$-tuple $v_i,v_j,v_k,v_l$ is odd in both drawings $D$ and $D'$.

\begin{figure}
 \begin{center}
   \includegraphics[scale=1]{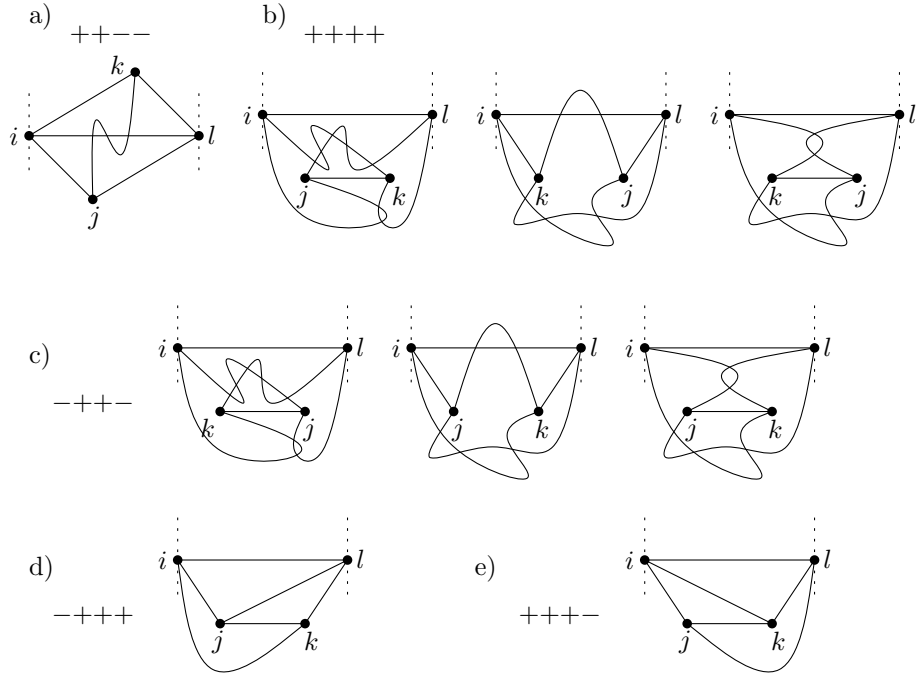}
  \caption{Examples of semisimple shellable drawings of $K_4$.}
 \label{fig_shellable_K4}
 \end{center}
\end{figure}

If $\sigma(i,k,l)={+}$, then the vertex $v_k$ and the interiors of the edges $v_iv_k$ and $v_kv_l$ lie below the edge $v_iv_l$. We have four cases according to the signs $\sigma(i,j,k)$ and $\sigma(j,k,l)$, which determine the vertical order of the edges near $v_i$ and $v_l$, respectively, but do not determine completely which edges cross oddly. This is true even when the drawing $H$ is simple; see Figure~\ref{fig_shellable_monotone_K4}. If $\sigma(i,j,k,l)={+}{+}{+}{+}$ or $\sigma(i,j,k,l)={-}{+}{+}{-}$, then either the edges $v_iv_k$ and $v_jv_l$ cross oddly, or the edges $v_iv_j$ and $v_kv_l$ cross oddly, and some other pair of edges may cross evenly; see Figure~\ref{fig_shellable_K4} b), c). In both cases, the $4$-tuple $v_i,v_j,v_k,v_l$ is odd in both drawings $D$ and $D'$. If $\sigma(i,j,k,l)={-}{+}{+}{+}$ or $\sigma(i,j,k,l)={+}{+}{+}{-}$, then no two edges cross; see Figure~\ref{fig_shellable_K4} d), e). In these last two cases, the $4$-tuple $v_i,v_j,v_k,v_l$ is even in both drawings $D$ and $D'$.
\end{proof}

\begin{figure}
 \begin{center}
   \includegraphics[scale=1]{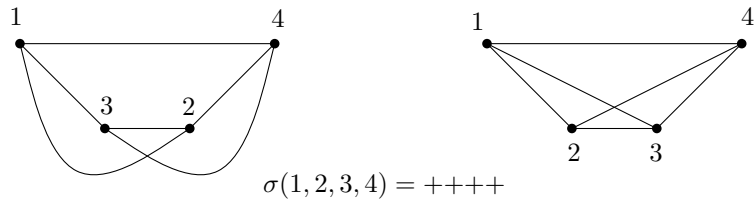}
  \caption{Two drawings of $K_4$ with the same order type.}
 \label{fig_shellable_monotone_K4}
 \end{center}
\end{figure}

Proposition~\ref{prop_shellable_semisimple} can be generalized to weakly semisimple shellable drawings, but the equality of the odd crossing numbers has to be replaced by inequality, since there are weakly semisimple shellable drawings of $K_4$ with odd crossing number $2$; see Figure~\ref{fig_weakly_shell_K4_ocr2}, left.

\begin{figure}
 \begin{center}
   \includegraphics[scale=1]{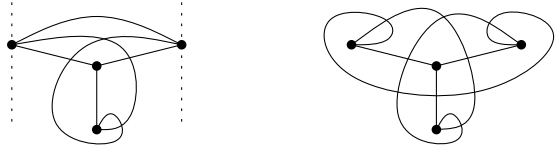}
  \caption{Left: a weakly semisimple shellable drawing of $K_4$ with two pairs of edges crossing oddly. Right: a weakly semisimple drawing of $K_4$ with three pairs of edges crossing oddly.}
 \label{fig_weakly_shell_K4_ocr2}
 \end{center}
\end{figure}

For general weakly semisimple drawings, the triangles are not necessarily simple closed curves. Nevertheless, we may still define the orientation of a triangle when every two of its edges cross evenly. Let $uvw$ be a triangle in a weakly semisimple drawing $D$ of $K_n$. Orient the closed curve $\gamma$ representing the triangle $uvw$ so that it passes through the vertices $u,v,w$ in this cyclic order. Then for each point $p$ on $\gamma$ that is not a crossing, a sufficiently small neighborhood of $p$ is divided by $\gamma$ into the {\em right neighborhood\/} and the {\em left neighborhood} of $p$, consistently with the chosen orientation of $\gamma$. 

Let $x$ be a point in the complement of $\gamma$ in the plane. The {\em winding number of $\gamma$ around $x$}
is, informally speaking, the number of counter-clockwise turns of $\gamma$ around $x$. More formally, if $\gamma$ is parametrized by continuous polar coordinates $(r(t), \varphi(t)): [0,1]\rightarrow (0,\infty)\times \mathbb{R}$, with center at $x$, then 
the winding number of $\gamma$ around $x$ is $\frac{\varphi(1)-\varphi(0)}{2\pi}$. We use only the parity of the winding number, which is independent of the chosen orientation of $\gamma$.

We say that the triangle $uvw$, represented by the curve $\gamma$, is oriented {\em counter-clockwise\/} if for some point $x$ in the right neighborhood of $u$, the winding number of $\gamma$ around $x$ is even. Similarly, the triangle $uvw$ is oriented {\em clockwise\/} if the winding number of $\gamma$ around $x$ is odd. Due to the fact that every two edges of $uvw$ cross an even number of times, the definition does not change if we choose $x$ in the right neighborhood of $v$ or $w$. 
We may thus generalize the notion of the {\em order type\/} to every weakly semisimple drawing of $K_n$ with vertices labeled $v_1, v_2, \dots, v_n$.

\begin{proposition}\label{prop_shellable_weakly_semisimple}
Let $D$ be a weakly semisimple shellable drawing of $K_n$. There is a semisimple $x$-monotone drawing $D'$ of $K_n$ with $\text{ocr}(D')\le\text{ocr}(D)$. 
\end{proposition}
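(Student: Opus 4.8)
The plan is to follow the proof of Proposition~\ref{prop_shellable_semisimple} closely, replacing the order type of a semisimple drawing by the winding-number order type $\sigma$ defined above for weakly semisimple drawings. First I would fix a shelling sequence $v_1,\dots,v_n$ and show that $\sigma$ contains no forbidden $4$-tuple, so that by Theorem~\ref{theorem_classif_semisimple} it is realized by some semisimple $x$-monotone drawing $D'$. For a $4$-tuple $Q=\{v_i,v_j,v_k,v_l\}$ with $i<j<k<l$, shellability places $v_i$ and $v_l$ on the outer face of the induced drawing $D[Q]$ (deleting the remaining vertices only enlarges the outer face). As in Proposition~\ref{prop_shellable_semisimple}, a forbidden form such as ${+}{-}{+}\xi$ would force the edges $v_iv_j,v_iv_k,v_iv_l$ to appear cyclically as $v_iv_j\prec v_iv_k\prec v_iv_l\prec v_iv_j$ in the rotation at $v_i$ read from the outer face, which is impossible for a linear order. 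The one point to be checked, and which the winding-number definition is designed to guarantee, is that for $v_i$ on the outer face the sign $\sigma(i,j,k)$ still records the relative order of $v_iv_j$ and $v_iv_k$ in this rotation.

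Second, I would reduce the inequality to a statement about single $4$-tuples. Since $D$ is weakly semisimple, adjacent edges cross evenly, so only independent pairs contribute to $\text{ocr}(D)$, and every such pair lies in a unique $4$-tuple; hence $\text{ocr}(D)=\sum_Q \text{ocr}(D[Q])$. On the other hand $D'$ is semisimple, so by the claim in the proof of Lemma~\ref{lemma2} each induced $K_4$ has odd crossing number at most $1$, equal to $1$ exactly for the convex $4$-tuples, and thus $\text{ocr}(D')=\sum_{Q\text{ convex}}1$. Consequently it suffices to prove that \emph{every $4$-tuple $Q$ convex in $\sigma$ satisfies $\text{ocr}(D[Q])\ge 1$}; the even $4$-tuples contribute nonnegatively, and summing yields $\text{ocr}(D')\le\text{ocr}(D)$.

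Third, I would establish this $K_4$ statement by the same case analysis as in Proposition~\ref{prop_shellable_semisimple}, reading every geometric claim as a statement about parities. Deform $D[Q]$ so that $v_i=(0,0)$, $v_l=(1,0)$, and all other vertices and edge interiors lie in the open strip between the vertical lines through $v_i$ and $v_l$; then the arc $v_iv_l$ separates the strip into an upper and a lower part. Assuming $\sigma(i,j,l)={+}$, the vertex $v_j$ lies below $v_iv_l$. If $\sigma(i,k,l)={-}$, then $v_k$ lies above $v_iv_l$, so $v_j$ and $v_k$ are separated by the arc $v_iv_l$ and the edge $v_jv_k$ must cross $v_iv_l$ an odd number of times; this yields an odd crossing and covers the convex forms arising here (namely ${+}{+}{-}{-}$, as in Proposition~\ref{prop_shellable_semisimple}). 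If $\sigma(i,k,l)={+}$, the same sub-case distinction on $\sigma(i,j,k)$ and $\sigma(j,k,l)$ applies: in the convex sub-cases ${+}{+}{+}{+}$ and ${-}{+}{+}{-}$ one of the pairs $\{v_iv_k,v_jv_l\}$, $\{v_iv_j,v_kv_l\}$ has its endpoints separated by a relevant arc and hence crosses oddly, while the even sub-cases ${-}{+}{+}{+}$ and ${+}{+}{+}{-}$ impose no requirement; compare Figure~\ref{fig_shellable_K4}. The decisive point is that these separation arguments are purely $\mathbb{Z}_2$-topological: lying on opposite sides of an arc forces an odd number of crossings regardless of how the adjacent edges wind, so the even adjacent crossings allowed in a weakly semisimple drawing do not interfere.

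The main obstacle, and the reason the conclusion weakens from the equality of Proposition~\ref{prop_shellable_semisimple} to an inequality, lies in this last step: a weakly semisimple drawing of $K_4$ may have odd crossing number $2$ (see Figure~\ref{fig_weakly_shell_K4_ocr2}, left), so a convex $4$-tuple can contribute $2$ rather than $1$ to $\text{ocr}(D)$, and an even $4$-tuple can contribute $2$ rather than $0$. Thus we can only guarantee $\text{ocr}(D[Q])\ge\text{ocr}(D'[Q])$ term by term, not equality. A secondary technical care is needed to make the correspondence between the winding-number orientations and the side and rotation relations precise, since in the weakly semisimple setting the triangles are no longer simple closed curves; here the independence of the winding-number parity from the choice of reference vertex, noted just before the proposition, is exactly what makes the side arguments well defined.
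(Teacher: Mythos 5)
Your overall frame coincides with the paper's proof: fix a shelling sequence, show the winding-number order type $\sigma$ has no forbidden $4$-tuples, invoke Theorem~\ref{theorem_classif_semisimple} to obtain $D'$, and reduce $\text{ocr}(D')\le\text{ocr}(D)$ to a statement about single $4$-tuples (your formulation, ``every $4$-tuple that is convex in $\sigma$ has $\text{ocr}(D[Q])\ge 1$,'' is the contrapositive of the paper's ``every $4$-tuple with $\text{ocr}(D[Q])=0$ has one of the four planar types''). The divergence --- and the gap --- lies in how that $K_4$ statement is established. The paper does not attempt your case analysis at all: given a $4$-tuple in which all six edges pairwise cross evenly (adjacent pairs by weak semisimplicity, independent pairs by the hypothesis $\text{ocr}(D[Q])=0$), it adds two auxiliary vertices $y,z$ and a crossing-free cycle $v_iyv_lz$ enclosing the drawing, applies the weak Hanani--Tutte theorem to get a planar drawing with the same rotation system, removes $y,z$, and argues that the rotations at $v_i$ and $v_l$ determine all four triangle orientations; this forces one of the types ${+}{+}{+}{-}$, ${-}{+}{+}{+}$, ${-}{-}{-}{+}$, ${+}{-}{-}{-}$ at once.

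Your substitute for this step does not go through as written. Case A is fine once one knows that $\sigma(i,j,l)={+}$ is equivalent to ``$v_j$ lies in the lower component of the strip minus the arc $v_iv_l$,'' but this equivalence is not a consequence of the reference-vertex independence you cite; it needs its own argument (weak semisimplicity forces $v_iv_j$ to leave $v_i$ on the same side of $v_iv_l$ as $v_j$, since otherwise $v_iv_j$ would cross $v_iv_l$ oddly, and then one computes the winding parity in the right neighborhood of $v_i$ by connecting it across the strip boundary to the unbounded region). More seriously, in Case B the justification ``one of the pairs has its endpoints separated by a relevant arc'' is false as stated: the only arc whose endpoints lie on the strip boundary is $v_iv_l$, and in Case B both $v_j$ and $v_k$ lie on the same side of it, so it separates nothing useful; no single arc can separate the endpoints of $v_iv_k$ from those of $v_jv_l$, because $v_i$ and $v_l$ lie on every candidate. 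What is true, and what you would have to prove, is a conditional parity statement, e.g.\ for type ${+}{+}{+}{+}$: \emph{if} $v_iv_j$ and $v_kv_l$ cross evenly, \emph{then} the (non-simple) two-edge path $v_iv_k\cup v_kv_l$, closed up through infinity, $\mathbb{Z}_2$-separates $v_j$ from the sector at $v_l$ into which $v_jv_l$ arrives, whence $v_iv_k$ and $v_jv_l$ cross oddly. Such winding-parity arguments for non-simple curves can indeed be carried out --- they amount to a hand-made version of the weak Hanani--Tutte step --- but they are the real content of the proposition and are exactly what is missing from your proposal. Your closing explanation of why the conclusion is an inequality rather than an equality does match the paper's (Figure~\ref{fig_weakly_shell_K4_ocr2}, left).
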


\begin{proof}
We proceed in the same way as in the proof of Proposition~\ref{prop_shellable_semisimple}.
Let $v_1, v_2, \dots, v_n$ be a shelling sequence of $D$ and let $\sigma$ be the order type of $D$.
The fact that $\sigma$ satisfies the assumptions of Theorem~\ref{theorem_classif_semisimple} can be proved exactly in the same way as in the proof of Proposition~\ref{prop_shellable_semisimple}. Let $D'$ be a semisimple monotone drawing with signature function $\sigma$.

To prove the inequality, it is sufficient to show that every $4$-tuple of vertices in $D$ that induces a $K_4$ subgraph with odd crossing number $0$, corresponds to a $4$-tuple with no crossing in $D'$. For that, we only need to show that the $4$-tuple in $D$ is of the type ${+}{+}{+}{-}$, ${-}{+}{+}{+}$, ${-}{-}{-}{+}$ or ${+}{-}{-}{-}$. All other $4$-tuples in $D$ induce subgraphs with odd crossing number $1$ or $2$, which is at least as large as the odd crossing number of any $K_4$ subgraph in $D'$.

Let $v_i,v_j,v_k,v_l$, with $i<j<k<l$, be vertices in $D$ inducing a subgraph $H$ with all pairs of edges crossing evenly. We will show that there is a planar drawing $H''$ of the complete graph with vertices $v_i,v_j,v_k,v_l$, with $v_i$ and $v_l$ on its outer face, such that the orientation of each triangle in $H''$ is the same as in $H$. This will finish the proof, since such a drawing $H''$ is homeomorphic to one of the drawings in Figure~\ref{fig_shellable_K4} d), e).

The drawing $H$ satisfies the assumptions of the weak Hanani--Tutte theorem\mydotl\mycite{S13+_Tutte}\mydotr The weak Hanani--Tutte theorem says that for every drawing $D$ of a graph $G$ in the plane where every two edges cross an even number of times, there is a planar drawing $D'$ of $G$ which has the same rotation system as $D$ (that is, the cyclic orders of the edges around each vertex are preserved). The shortest proof of the weak Hanani--Tutte theorem, based on a more general version for arbitrary surfaces\mycommal\mycite{PSS09_surfaces}\mycommar was given by Fulek {\em et al.}\mycite[Lemma 3]{FPSS12_adjacent}\mydotr

We may assume that $v_i$ is the unique point in $H$ with smallest $x$-coordinate and that $v_l$ is the unique point in $H$ with largest $x$-coordinate. We extend the drawing $H$ to a drawing $K$ by adding a vertex $y$ placed below $H$, a vertex $z$ placed above $H$, and adding four edges $v_iy, yv_l, v_iz, zv_l$, drawn as monotone curves and forming a simple cycle $v_iyv_lz$. The cycle $v_iyv_lz$ forms the boundary of the outer face of $K$. By the weak Hanani--Tutte theorem, there is a planar drawing $K'$ having the same rotation system as $K$. In particular, the cycle $v_iyv_lz$ bounds a face $F$ in $K'$. Without loss of generality, we may assume that $F$ is the outer face of $K'$. Let $H'$ be the drawing obtained from $K'$ by removing the vertices $y,z$ and their adjacent edges. Clearly, the drawings $H'$ and $H$ have the same rotation system, $H'$ has no crossings, and $v_i$ and $v_l$ are on the boundary of the outer face of $H'$. The orientation of triangles $v_iv_jv_k$, $v_iv_jv_l$ and $v_iv_kv_l$ is determined by the rotation at $v_i$, and the orientation of the triangle $v_jv_kv_l$ is determined by the rotation at $v_l$. It follows that $H$ and $H'$ have the same order type, and the proof is finished.
\end{proof}

An attempt to generalize the approach in Proposition~\ref{prop_shellable_semisimple} to general non-shellable drawings fails, for the following reason. If $v_1, v_2, \dots, v_n$ is a chosen ordering of the vertices which is not a shelling sequence, we can have a $4$-tuple $v_i, v_j,v_k,v_l$, with $i<j<k<l$, inducing a planar drawing of $K_4$ such that $v_i$ or $v_l$ is the only vertex not incident with the outer face. These $4$-tuples are of type ${+}{-}{+}{+}$, ${+}{+}{-}{+}$, ${-}{+}{-}{-}$, or ${-}{-}{+}{-}$. In monotone drawings, such $4$-tuples are not semisimple and, moreover, have monotone odd crossing number $2$.
On the other hand, this is the only obstacle in generalizing Proposition~\ref{prop_shellable_semisimple} to all simple drawings. Indeed, it is easy to see that all simple drawings of $K_4$ with one crossing and arbitrary ordering of the vertices are of type ${+}{+}{+}{+}$, ${+}{+}{-}{-}$, ${-}{-}{+}{+}$, ${+}{-}{-}{+}$, ${-}{+}{+}{-}$, or ${-}{-}{-}{-}$, and thus correspond to a simple monotone drawing of $K_4$ with one crossing. In fact, this is still true also for semisimple drawings, by the claim in the proof of Lemma~\ref{lemma2}.

We may thus generalize Proposition~\ref{prop_shellable_semisimple} and consequently Theorem~\ref{theorem_1} to every drawing of $K_n$ such that there is an ordering $v_1, v_2, \dots, v_n$ of its vertices such that for every $4$-tuple $v_i, v_j,v_k,v_l$, with $i<j<k<l$, inducing a planar drawing $H$ of $K_4$, the vertices $v_i$ and $v_l$ are on the outer face of $H$. We call such a drawing {\em weakly shellable\/}. Trivially, every drawing of $K_n$ with ${n \choose 4}$ crossings is weakly shellable, with arbitrary ordering of its vertices. 


\begin{corollary}
Let $D$ be a semisimple weakly shellable drawing of $K_n$. There is a semisimple $x$-monotone drawing $D'$ of $K_n$ with $\text{ocr}(D')=\text{ocr}(D)$.
\end{corollary}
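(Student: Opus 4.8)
The plan is to mimic the proof of Proposition~\ref{prop_shellable_semisimple} almost verbatim, replacing the shelling sequence by the ordering $v_1,v_2,\dots,v_n$ furnished by weak shellability and letting $\sigma$ be the order type of $D$ with respect to this ordering. The whole argument hinges on checking that $\sigma$ avoids every forbidden $4$-tuple of Theorem~\ref{theorem_classif_semisimple}, so that $\sigma$ is realizable by some semisimple $x$-monotone drawing $D'$; the equality of odd crossing numbers will then follow by matching $4$-tuples one at a time.

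First I would split the $4$-tuples $v_i,v_j,v_k,v_l$ (with $i<j<k<l$) of $D$ according to whether the induced drawing $H$ of $K_4$ is planar. If $H$ has a crossing, then since $D$ is semisimple the claim in the proof of Lemma~\ref{lemma2} guarantees that $H$ has exactly one pair of edges crossing oddly, and, as recalled in the discussion preceding this corollary, every semisimple drawing of $K_4$ with one odd crossing is, under an arbitrary ordering of its vertices, of one of the six convex forms ${+}{+}{+}{+}$, ${-}{-}{-}{-}$, ${+}{+}{-}{-}$, ${-}{-}{+}{+}$, ${-}{+}{+}{-}$, ${+}{-}{-}{+}$, none of which is forbidden. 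If $H$ is planar, then the defining property of weak shellability says precisely that the extreme vertices $v_i$ and $v_l$ lie on the outer face of $H$; this is exactly the situation analyzed in the proof of Proposition~\ref{prop_shellable_semisimple} (see Figure~\ref{fig_shellable_K4} d), e)), where such a $4$-tuple is shown to be of one of the four forms ${+}{+}{+}{-}$, ${-}{+}{+}{+}$, ${-}{-}{-}{+}$, ${+}{-}{-}{-}$, again none of them forbidden. Hence every $4$-tuple of $\sigma$ is of an allowed form, and Theorem~\ref{theorem_classif_semisimple} produces a semisimple $x$-monotone drawing $D'$ inducing $\sigma$.

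It then remains to see that $\text{ocr}(D')=\text{ocr}(D)$. Because both drawings are semisimple, only independent pairs of edges can cross oddly, each such pair is contained in a unique $4$-tuple of vertices, and, by the claim from the proof of Lemma~\ref{lemma2}, each $4$-tuple contributes at most one such pair. Thus $\text{ocr}$ of either drawing equals the number of its \emph{odd} $4$-tuples. By the classification above, a $4$-tuple is odd in $D$ exactly when it is of one of the six convex forms, which is precisely the set of $4$-tuples inducing a crossing in the semisimple monotone drawing $D'$; the remaining four forms induce a crossing-free $K_4$ in both drawings. The odd $4$-tuples of $D$ and of $D'$ therefore coincide, and counting them on both sides gives the desired equality.

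The only genuinely nontrivial input is the classification of crossing semisimple $K_4$'s under an arbitrary vertex ordering into the six convex forms, which is the step that fails for general, non-weakly-shellable orderings: there a planar $K_4$ can hide its extreme vertex in the interior and produce one of the forbidden forms ${+}{-}{+}{+}$, ${+}{+}{-}{+}$, ${-}{+}{-}{-}$, ${-}{-}{+}{-}$. Since weak shellability forbids exactly this configuration and the convex classification is already supplied by the claim in the proof of Lemma~\ref{lemma2}, I expect no further obstacle; the statement should follow essentially immediately from Proposition~\ref{prop_shellable_semisimple} together with the preceding discussion.
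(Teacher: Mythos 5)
Your proof is correct and follows essentially the same route as the paper, which obtains this corollary directly from the discussion preceding it: crossing $4$-tuples are of convex type by the claim in the proof of Lemma~\ref{lemma2}, planar $4$-tuples avoid the forbidden forms precisely because of weak shellability, and the count of odd $4$-tuples transfers to the monotone realization exactly as in Proposition~\ref{prop_shellable_semisimple}. The only small imprecision is your remark that the four even forms induce a crossing-free $K_4$ ``in both drawings''---in $D'$, which is only semisimple, such a $4$-tuple may still contain pairs of edges crossing an even number of times---but this does not affect your argument, since the counting is phrased entirely in terms of pairs crossing oddly.
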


\begin{corollary}
Let $D$ be a semisimple weakly shellable drawing of $K_n$. Then $\text{ocr}(D)\ge Z(n).$
\end{corollary}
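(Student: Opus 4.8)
The final statement is the Corollary: every semisimple weakly shellable drawing $D$ of $K_n$ satisfies $\text{ocr}(D) \ge Z(n)$. The plan is to chain together the two immediately preceding results. The previous Corollary produces, from $D$, a semisimple $x$-monotone drawing $D'$ of $K_n$ with $\text{ocr}(D') = \text{ocr}(D)$. So it suffices to prove the lower bound $\text{ocr}(D') \ge Z(n)$ for semisimple $x$-monotone drawings, and then transport it back along the equality.

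First I would invoke Lemma~\ref{lemma2}, which expresses $\text{ocr}(D')$ exactly in terms of the quantities $E_{{\le}{\le}k}(D')$:
\[
\text{ocr}(D') = 2\sum_{k=0}^{\lfloor n/2\rfloor-2} E_{{\le}{\le}k}(D') - \frac{1}{2}\binom{n}{2}\left\lfloor\frac{n-2}{2}\right\rfloor - \frac{1}{2}\left(1+(-1)^n\right)E_{{\le}{\le}\lfloor n/2\rfloor-2}(D').
\]
Next I would feed in the lower bound $E_{{\le}{\le}k}(D') \ge 3\binom{k+3}{3}$ from Theorem~\ref{thm2}, which holds for every semisimple $x$-monotone drawing and every $k$ with $0 \le k < n/2 - 1$. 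Substituting these bounds term by term into the Lemma~\ref{lemma2} formula reduces everything to a purely numerical evaluation: a sum of binomial coefficients against the explicit correction terms. The computation is exactly the one carried out in\mycite{abre12_2page} to conclude the $2$-page case, and the paper has already flagged (in the paragraph preceding Theorem~\ref{thm2}) that Lemma~\ref{lemma2} and Theorem~\ref{thm2} together yield the bound $Z(n)$ ``by the same computation as in\mycite{abre12_2page}''. So this arithmetic step I would cite rather than reproduce.

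Finally, combining $\text{ocr}(D') \ge Z(n)$ with the equality $\text{ocr}(D) = \text{ocr}(D')$ from the preceding Corollary gives $\text{ocr}(D) \ge Z(n)$, as required. The only genuinely nontrivial ingredient is the reduction to the monotone case, which is precisely what the preceding Corollary (via Proposition~\ref{prop_shellable_semisimple} and the weak Hanani--Tutte argument) supplies; once that is in hand, the present Corollary is a one-line consequence. Thus I expect no real obstacle here—the main work has already been done in Theorem~\ref{thm2}, Lemma~\ref{lemma2}, and the weak-shellable reduction, and the present statement is the short bookkeeping that ties them together.
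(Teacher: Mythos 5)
Your proposal is correct and matches the paper's intended argument exactly: the corollary is meant to follow by chaining the immediately preceding corollary (which gives a semisimple $x$-monotone $D'$ with $\text{ocr}(D')=\text{ocr}(D)$) with the lower bound $\text{ocr}(D')\ge Z(n)$ for semisimple monotone drawings, itself obtained from Lemma~\ref{lemma2} and Theorem~\ref{thm2} by the computation cited from the $2$-page case. No gaps; this is the same one-line reduction the paper has in mind.
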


We note that there are simple drawings of complete graphs that are not weakly shellable. For example, the drawing $F_6$ of $K_6$ in Figure~\ref{obr2_K_6_K_10}, left, has the property that every vertex is the central vertex of a planar drawing of $K_4$ induced by some $4$-tuple of vertices. Moreover, by taking two disjoint copies $F_6$ and adding all remaining $36$ edges, we obtain a simple drawing of $K_{12}$ which will not become weakly shellable even if we change its outer face by an arbitrary sequence of edge flips.

By removing the central vertex in $F_6$ we obtain a weakly shellable simple drawing of $K_5$ that is not shellable. This shows that weakly shellable drawings are more general than shellable drawings.

\subsection{Local characterization of shellable drawings}

The definition of a shellable drawing of a complete graph involves testing a quadratic number of subgraphs. It is easy to see that only linearly many of the subgraphs are sufficient. 

\begin{observation}
A sequence of vertices $v_1, v_2, \dots, v_n$ is a shelling sequence of a drawing of a complete graph if and only if for every $i\in [n]$, the vertex $v_i$ is on the outer face of the two subgraphs induced by the subsets of vertices $\{v_1, v_2, \dots, v_i\}$ and $\{v_i, v_{i+1}, \dots, v_n\}$.
\end{observation}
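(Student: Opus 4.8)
The plan is to prove the two directions of the equivalence separately, where one direction is essentially trivial and the other is the content of the observation. The quadratic condition in the definition of a shelling sequence asks, for every pair $i<j$, that $v_i$ and $v_j$ both lie on the outer face of the drawing $D_{i,j}$ induced by the interval of vertices $\{v_i, v_{i+1}, \dots, v_j\}$. The claimed linear condition asks only that each $v_i$ lie on the outer face of the ``prefix'' drawing $P_i$ induced by $\{v_1, \dots, v_i\}$ and of the ``suffix'' drawing $S_i$ induced by $\{v_i, \dots, v_n\}$. The forward implication is immediate: taking $j=n$ in the definition shows $v_i$ is on the outer face of $S_i$ for every $i$, and taking $i=1$ (after swapping the roles of the two indices) shows $v_j$ is on the outer face of $P_j$ for every $j$; since $\{v_1,\dots,v_i\}=\{v_1,\dots,v_i\}$ is exactly $P_i$ and $\{v_i,\dots,v_n\}$ is exactly $S_i$, the linear condition is just the special cases $j=n$ and $i=1$ of the quadratic one.

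The substantive direction is the converse, and here the key observation is monotonicity of the ``outer face'' property under passing to induced subgraphs that contain the relevant vertex. Concretely, I would first establish the following elementary lemma: if a vertex $w$ lies on the outer face of a drawing $H$ of a graph, and $H''$ is the drawing induced by a subset $W''$ of the vertices of $H$ with $w\in W''$, then $w$ still lies on the outer face of $H''$. This is clear topologically, since deleting vertices and their incident edges only enlarges the faces of the drawing; the face of $H$ incident with $w$ that touches the unbounded region can only merge with neighboring faces or with the unbounded region, so $w$ remains on the outer boundary. I would state this as the one genuine ingredient and give the short topological justification.

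Granting the lemma, the converse is a direct application. Fix $i<j$ and consider the interval drawing $D_{i,j}$ on $\{v_i,\dots,v_j\}$. Observe that $\{v_i,\dots,v_j\}$ is a subset of $\{v_1,\dots,v_j\}$ containing $v_i$, and it is a subset of $\{v_i,\dots,v_n\}$ containing $v_j$. By the linear condition, $v_i$ lies on the outer face of $P_j$ (since $v_i$ lies on the outer face of $P_i$, but more directly because the linear hypothesis gives that $v_j$ lies on the outer face of $P_j$ — here I must be careful which vertex I track). The clean way to phrase it: $v_j$ lies on the outer face of $P_j=D[\{v_1,\dots,v_j\}]$ by hypothesis, and since $\{v_i,\dots,v_j\}\subseteq\{v_1,\dots,v_j\}$ contains $v_j$, the lemma gives that $v_j$ lies on the outer face of $D_{i,j}$. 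Symmetrically, $v_i$ lies on the outer face of $S_i=D[\{v_i,\dots,v_n\}]$ by hypothesis, and since $\{v_i,\dots,v_j\}\subseteq\{v_i,\dots,v_n\}$ contains $v_i$, the lemma gives that $v_i$ lies on the outer face of $D_{i,j}$. Thus both $v_i$ and $v_j$ lie on the outer face of $D_{i,j}$, which is exactly the quadratic condition.

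The main obstacle — and it is a mild one — is pinning down the monotonicity lemma rigorously, namely that deleting vertices and edges from a plane drawing cannot move a vertex off the outer boundary. One must argue that a point on the boundary of the unbounded face of $H$ remains on the boundary of the unbounded face of any induced sub-drawing; a short argument is that a curve witnessing $w$'s visibility from infinity in $H$ (an arc from $w$ to the unbounded region avoiding all other edges) persists verbatim in $H''$ after edges are removed, since removing edges can only create such witnessing arcs, never destroy them. Once this lemma is isolated and proved, the rest is pure bookkeeping with subset inclusions, and requires no further topology.
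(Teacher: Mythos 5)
Your proof is correct: the forward direction is indeed the special cases $(1,j)$ and $(i,n)$ of the definition, and the converse follows from the monotonicity lemma that a vertex on the outer face of a drawing stays on the outer face of any induced subdrawing containing it, since the witnessing arc to infinity survives edge and vertex deletions. The paper states this observation without proof (``it is easy to see''), and your argument is exactly the intended one, with the key lemma correctly isolated and justified.
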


In a similar spirit as in Theorem~\ref{theorem_classif_semisimple}, we may obtain a local characterization of shellable drawings, by testing only the subgraphs with four vertices. Like in Theorem~\ref{theorem_classif_semisimple}, we need to assume a fixed ordering of the vertices, as there are arbitrarily large minimal non-shellable (and non-monotone) drawings of complete graphs---for example, ``flowers'' generalizing the drawing $F_6$ in Figure~\ref{obr2_K_6_K_10}, left. Unlike in the case of monotone drawings, the order type does not necessarily determine a unique shellable drawing; see Figure~\ref{fig_shellable_monotone_K4}.

\begin{theorem}\label{theorem_local_char_shellable}
Let $D$ be a simple drawing of $K_n$. A sequence $v_1, v_2, \dots, v_n$ of the vertices is a shelling sequence of $D$ if and only if every $4$-tuple $v_i,v_j,v_k,v_l$, with $i<j<k<l$, induces a drawing of $K_4$ having $v_i$ and $v_l$ on its outer face.
\end{theorem}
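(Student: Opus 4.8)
The plan is to prove both implications through the preceding Observation, which states that $v_1,\dots,v_n$ is a shelling sequence exactly when each $v_i$ lies on the outer face of the two induced drawings $D[\{v_1,\dots,v_i\}]$ and $D[\{v_i,\dots,v_n\}]$. The forward implication is then elementary, whereas the backward implication is where the work lies and rests on a Carath\'eodory-type statement for simple drawings of complete graphs.

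For the forward implication, suppose $v_1,\dots,v_n$ is a shelling sequence and fix $i<j<k<l$. Applying the defining property to the pair $(i,l)$, the vertices $v_i$ and $v_l$ are incident with the outer face of $D[\{v_i,v_{i+1},\dots,v_l\}]$. Deleting all vertices strictly between them except $v_j$ and $v_k$ yields $D[\{v_i,v_j,v_k,v_l\}]$, and since deleting vertices together with their incident edges can only enlarge the outer face, $v_i$ and $v_l$ remain incident with it. This is exactly the required condition on the $4$-tuple.

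For the backward implication I would first note that the hypothesis is inherited by every induced subdrawing, because the drawing of a $K_4$ depends only on its four vertices. Hence, by the Observation, it suffices to show that whenever $w$ is the smallest- or the largest-indexed vertex of a subset $S\subseteq\{v_1,\dots,v_n\}$, it lies on the outer face of $D[S]$. I would argue by contradiction using the following generalized Carath\'eodory theorem for simple drawings of complete graphs: \emph{if a vertex $w$ is not incident with the outer face, then $w$ lies inside the triangle $abc$ spanned by some three of the remaining vertices.} Granting this, suppose $w$ is the smallest-indexed vertex of $S$ and is enclosed in $D[S]$; the theorem supplies $a,b,c\in S$, all of larger index than $w$, with $w$ inside triangle $abc$. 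Then $w$ is already enclosed in the induced $K_4$ on $\{w,a,b,c\}$, because $abc$ is a simple closed curve avoiding $w$ that encloses it and adding the edges $wa,wb,wc$ cannot move $w$ to the outer face; as $w$ is the smallest-indexed vertex of this $4$-tuple, this contradicts the hypothesis. The largest-indexed case is symmetric, so every such $w$ is on the outer face and the Observation completes the proof.

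The heart of the matter, and what I expect to be the main obstacle, is the generalized Carath\'eodory theorem. Its easy direction is immediate: a vertex inside a triangle is enclosed by that simple closed curve and hence not on the outer face. For the reverse direction I would use a homological argument. Fixing a generic ray from $w$ to infinity, the parity of its crossings with each edge of $K_n-w$ defines a $\mathbb{Z}_2$-valued function on the edges whose sum along any cycle equals the parity of the winding number of that cycle about $w$; this sum vanishes on all cycles precisely when $w$ can be joined to infinity by a crossing-free curve, that is, when $w$ is on the outer face. Therefore, if $w$ is enclosed, some cycle of $K_n-w$ has odd winding number about $w$. Writing this cycle as a sum of triangles in the cycle space of the complete graph, at least one triangle must have odd winding number about $w$; since every triangle in a simple drawing is a simple closed curve (its three pairwise adjacent edges do not cross), odd winding number means that $w$ lies inside it, giving the desired enclosing triangle. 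I expect the delicate point to be making rigorous the equivalence between $w$ being enclosed and the existence of an odd-winding cycle, specifically the direction asserting that if all cycles wind evenly then $w$ escapes to infinity; an alternative route is a direct induction on $n$ that deletes one vertex at a time and recurses whenever $w$ remains enclosed, the only subtle case being when every single deletion frees $w$.
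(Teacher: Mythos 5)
Your overall reduction is exactly the paper's: the paper also proves the easy direction directly, and for the converse combines the Observation with a Carath\'eodory-type lemma (``Carath\'eodory's theorem for simple complete topological graphs''), deducing that an enclosed vertex $v_i$ lies inside a triangle $v_jv_kv_l$ of smaller-indexed (or larger-indexed) vertices and hence is enclosed in an induced $K_4$, contradicting the $4$-tuple hypothesis. Your handling of those steps is fine. The difference is that the paper proves the Carath\'eodory lemma by a careful induction on $n$, adding one vertex at a time and maintaining a family of at most $n-2$ triangles covering all bounded faces (with a substantial case analysis), whereas you propose a homological argument --- and that is where your proposal breaks down.

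The gap is the step you yourself flag as ``delicate'': \emph{if every cycle of $K_n-w$ has even winding parity about $w$, then $w$ lies in the outer face}. This is not a technicality to be made rigorous; it is equivalent to the whole lemma (your own cycle-space decomposition shows ``enclosed $\Rightarrow$ some odd-winding cycle'' implies Carath\'eodory, and the converse is trivial), and your sketch contains no argument for it. Worse, the statement is \emph{false} without simplicity: draw $K_4$ so that the independent edges $ab$ and $cd$ cross twice, enclosing $w$ in a lens, and route $ac,ad,bc,bd$ so that a fixed generic ray from $w$ crosses $ab,ac,ad$ once each and $bc,bd,cd$ not at all; the crossing-parity function is then the cut determined by $\{a\}$, so \emph{every} cycle winds evenly about $w$, yet $w$ lies in a bounded face. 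Hence any correct proof must use simplicity (no lenses, at most one crossing per pair of edges) in an essential way, but your homological argument invokes simplicity only in the final, easy step (triangles are simple closed curves). What duality actually gives you for free is a loop of odd winding in the \emph{union of the edges}, where the loop may turn at crossings; upgrading such a loop to a genuine graph cycle is precisely the content of the lemma. Your fallback suggestion (delete vertices one at a time, ``the only subtle case being when every single deletion frees $w$'') likewise only names the hard configuration rather than resolving it --- the paper's page-long induction is, in effect, exactly the resolution of that case.
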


To show Theorem~\ref{theorem_local_char_shellable}, we use the following generalization of Carath\'eodory's theorem.

\begin{lemma}[Carath\'eodory's theorem for simple complete topological graphs]\label{lemma_caratheodory}
Let $D$ be a simple drawing of $K_n$ and let $x$ be a point in the interior of a bounded face of $D$. Then there is a triangle $uvw$ in $D$ containing $x$ in its interior. Moreover, there is a set of at most $n-2$ triangles covering all bounded faces of $D$
and such that every edge of $D$ is in at most two of these triangles.
\end{lemma}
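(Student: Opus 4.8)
The plan is to prove both assertions by induction on $n$, at each step peeling off a vertex $v$ incident to the outer face. I first record two facts used throughout. Since $D$ is simple, the three edges of any triangle $uvw$ are pairwise adjacent and hence pairwise non‑crossing, so the triangle is a simple closed (Jordan) curve and ``the interior of a triangle'' is well defined. Moreover, every face of $D$ lies entirely inside or entirely outside each such triangle, because a face cannot meet the triangle's boundary (a union of edges). In particular, proving that $x$ lies in the interior of some triangle is the same as proving that the face containing $x$ is contained in some triangle, and the covering statement is equivalent to saying that every bounded face is contained in one of the chosen triangles.

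For the first assertion I would argue as follows. Let $v$ be a vertex incident to the outer face and let $D'=D-v$. If $x$ still lies in a bounded face of $D'$, then by the induction hypothesis some triangle on the remaining $n-1$ vertices contains $x$, and we are done. Otherwise $x$ lies in the outer face of $D'$ but in a bounded face of $D$, so the only edges that can separate $x$ from infinity are the star edges $vu_1,\dots,vu_{n-1}$, indexed in the rotation at $v$ as in Observation~\ref{obs_outer}. These star edges subdivide the outer face of $D'$, and the bounded face containing $x$ is a single slice lying in the sector at $v$ between two \emph{consecutive} star edges $vu_i$ and $vu_{i+1}$. The key point is that this slice is contained in the outer face of $D'$, hence disjoint from every edge of $D'$, in particular from $u_iu_{i+1}$; thus the slice meets the triangle $vu_iu_{i+1}$ only possibly along $vu_i$ and $vu_{i+1}$, which bound it. Since the triangle $u_iv u_{i+1}$ is oriented clockwise by Observation~\ref{obs_outer}, its interior occupies the sector at $v$ on the same side as the slice, so the slice, being connected and disjoint from the triangle's boundary, is trapped inside the Jordan triangle $vu_iu_{i+1}$, which therefore contains $x$.

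For the covering I would reuse the same peeling, adding \emph{exactly one} triangle per deleted outer vertex, so that a drawing on $n$ vertices uses at most $n-2$ triangles. Deleting the outer vertex $v$ yields, by induction, a covering of the bounded faces of $D'$ by at most $n-3$ triangles. The bounded faces of $D$ that lie inside bounded faces of $D'$ remain covered, as the old triangles survive as triangles in $D$. The newly enclosed faces are precisely the bounded slices of the outer face of $D'$ created by the star of $v$; all of them lie in the sector at $v$ between the two extreme star edges $vu_1$ and $vu_{n-1}$, and I would show, again via the consistent clockwise orientation of Observation~\ref{obs_outer}, that the single triangle $vu_1u_{n-1}$ contains all of them. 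Adding this one triangle keeps the total at most $n-2$.

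The main obstacle is the edge–incidence bookkeeping. The two new edges $vu_1$ and $vu_{n-1}$ are incident to the outer face of $D$ and are each used once, but the base edge $u_1u_{n-1}$ is inherited from $D'$, where it is typically an \emph{interior} edge (the boundary of the outer face of $D'$ between $u_1$ and $u_{n-1}$ is in general a path through several edges, not the single edge $u_1u_{n-1}$), so it could already be used twice, violating the ``at most two triangles per edge'' condition. Resolving this is the real work: I would strengthen the induction hypothesis to track incidences of candidate base edges — for instance, requiring the covering of $D'$ to be chosen so that $u_1u_{n-1}$ (equivalently, every edge that can arise as a future base) is used at most once — or replace the local peeling by a global triangulation of the disk $\Omega$ bounded by the outer boundary of $D$, in the spirit of triangulating a convex polygon into $n-2$ ears. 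Both approaches are complicated by the fact that the outer boundary of a simple drawing may pass through crossing points, so that $\Omega$ is a disk whose boundary mixes vertices and crossings; I expect this boundary analysis, rather than the counting itself, to be where the difficulty concentrates, and Observation~\ref{obs_outer} to be the tool that forces the required ears to be genuine triangles of $D$.
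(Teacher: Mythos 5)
Your induction cannot get started on every input: you begin by choosing a vertex $v$ incident to the outer face of $D$, but a simple drawing of $K_n$ need not have any such vertex. Concretely, take the rectilinear drawing of $K_5$ with vertices in convex position; the central pentagonal face of the pentagram is bounded by five crossings and five edge segments and is incident to no vertex. Regarding the drawing as lying on the sphere and re-projecting so that this face becomes the outer face yields a simple drawing of $K_5$ whose outer face has no vertex on its boundary, so your first step (and Observation~\ref{obs_outer}, whose hypothesis is exactly that $v$ lies on the outer face) is unavailable. This is precisely why the paper's proof, which also proceeds one vertex at a time but in the ``adding'' direction with an arbitrary vertex order, must distinguish two cases: the new vertex lies in the outer face of $D_{n-1}$ (the only case your peeling produces) or in the interior of a \emph{bounded} face of $D_{n-1}$. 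The second case is the longer half of the paper's argument (star edges may cross the outer boundary $C$ of $D_{n-1}$ twice, their crossings with $C$ are nested, and the final subcase is excluded by applying the induction hypothesis to find a triangle of $\mathcal{T}_{n-1}$ containing the new vertex); your proposal has no counterpart to it and cannot simply avoid it.

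Two further points. Even in the case you do treat, your claim that the face containing $x$ is a slice incident to $v$ between two \emph{consecutive} star edges silently uses the fact that each star edge crosses the outer boundary $C$ of $D'$ at most once; without this, a star edge can re-enter the outer face of $D'$ and the face containing $x$ need not touch $v$ at all, which breaks the orientation argument via Observation~\ref{obs_outer}. The paper proves this one-crossing property from completeness: two crossings would cut $D'$ into two parts each containing a vertex, forcing some edge $u_iu_j$ to cross the adjacent edge $vu_i$. Finally, you candidly leave the ``at most two triangles per edge'' bookkeeping unresolved; the paper settles it exactly along the line you guess at, by strengthening the induction hypothesis (two triangles of the family sharing an edge must be attached to opposite sides of that edge) together with a replacement step: if the new triangle's base $v_kv_l$ is already used by a triangle $v_mv_kv_l$ of the same orientation, then $v_m$ is forced to lie inside the new triangle, so $v_mv_kv_l$ is entirely covered and is deleted from the family, keeping the count at $n-2$ and the incidence bound intact. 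So the bookkeeping is fixable in your framework, but the missing bounded-face case is a genuine gap, not a detail.
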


We use only the first part of the lemma. 
The stronger conclusions are included since they follow easily from the proof and might be interesting on their own.

\begin{proof}
We proceed by induction on the number of vertices. For $n\le 2$ the assumptions are vacuous and for $n=3$ the statement is obvious. Now let $n\ge 4$ and suppose that the lemma has been proved for drawings with at most $n-1$ vertices. Let $v_1, v_2, \dots, v_n$ be the vertices of $D$. Let $D_{n-1}$ be the drawing of the complete subgraph induced by $v_1, v_2, \dots, v_{n-1}$. Let $C$ be the simple curve forming the boundary of the outer face of $D_{n-1}$. By induction, all bounded faces of $D_{n-1}$ are covered by a set $\mathcal{T}_{n-1}$ of at most $n-3$ triangles 
so that no edge is contained in more than two triangles from $\mathcal{T}_{n-1}$. We assume (and prove) an even stronger induction statement: if two triangles from $\mathcal{T}_{n-1}$ share an edge $e$, then they do not cover the same face incident with $e$. That is, the two triangles are ``attached'' to $e$ from the opposite sides of $e$. 

By adding $v_n$ with its incident edges to $D_{n-1}$, the outer face of $D_{n-1}$ is partitioned into the outer face of $D_n$ and several bounded faces. We show that all these new bounded faces can be covered by a single triangle.
We distinguish two cases.

\smallskip
\noindent
{\bf a)} The vertex $v_n$ is in the outer face of $D_{n-1}$. First we observe that no edge $v_iv_n$ has more than one crossing with $C$. See Figure~\ref{fig_caratheodory} a). Suppose the contrary and let $x_1$ and $x_2$ be two crossings of $v_nv_i$ with $C$ closest to $v_n$. Then the portion of $v_nv_i$ between $x_1$ and $x_2$ separates the drawing $D_{n-1}$ into two parts, each of them containing at least one vertex. In particular, the part that does not contain $v_i$ contains some other vertex $v_j$. The edge $v_iv_j$ has to lie in the closed region bounded by $C$, thus it is forced to cross the edge $v_iv_n$; a contradiction.

It follows that for every edge $v_nv_i$, either the relative interior of $v_nv_i$ lies outside $C$ and $v_i$ lies on $C$, or $v_nv_i$ crosses $C$ in exactly one point, $x_i$, and the portion of $v_nv_i$ between $x_i$ and $v_i$ lies in the closed region bounded by $C$. In all cases, only the initial portion of the edge $v_nv_i$ lies in the outer face of $D_{n-1}$. Consequently, only two edges incident with $v_n$ are incident with the outer face of $D_n$.

Let $v_nv_k$ and $v_nv_l$ be the two edges incident with $v_n$ and with the outer face of $D_n$. Since the relative interior of the edge $v_kv_l$ lies inside $C$, the triangle $v_nv_kv_l$ covers all bounded faces of $D_n$ lying outside $C$. 
If no triangle from $\mathcal{T}_{n-1}$ has the edge $v_kv_l$, or if exactly one such triangle, $v_mv_kv_l$, exists but has the opposite orientation from $v_nv_kv_l$, we let $\mathcal{T}_n=\mathcal{T}_{n-1}\cup\{v_nv_kv_l\}$. If some triangle $v_mv_kv_l$ from $\mathcal{T}_{n-1}$ has the edge $v_kv_l$ and has the same orientation as $v_nv_kv_l$, then $v_m$ cannot lie outside $v_nv_kv_l$, as then the edge $v_nv_m$ would be incident with the outer face. Hence $v_m$ is inside $v_nv_kv_l$. The orientation of the triangle $v_mv_kv_l$ then implies that the whole triangle $v_mv_kv_l$ is covered by $v_nv_kv_l$, and so we let $\mathcal{T}_n=(\mathcal{T}_{n-1}\setminus\{v_mv_kv_l\})\cup\{v_nv_kv_l\}$.

\begin{figure}
 \begin{center}
   \includegraphics[scale=1]{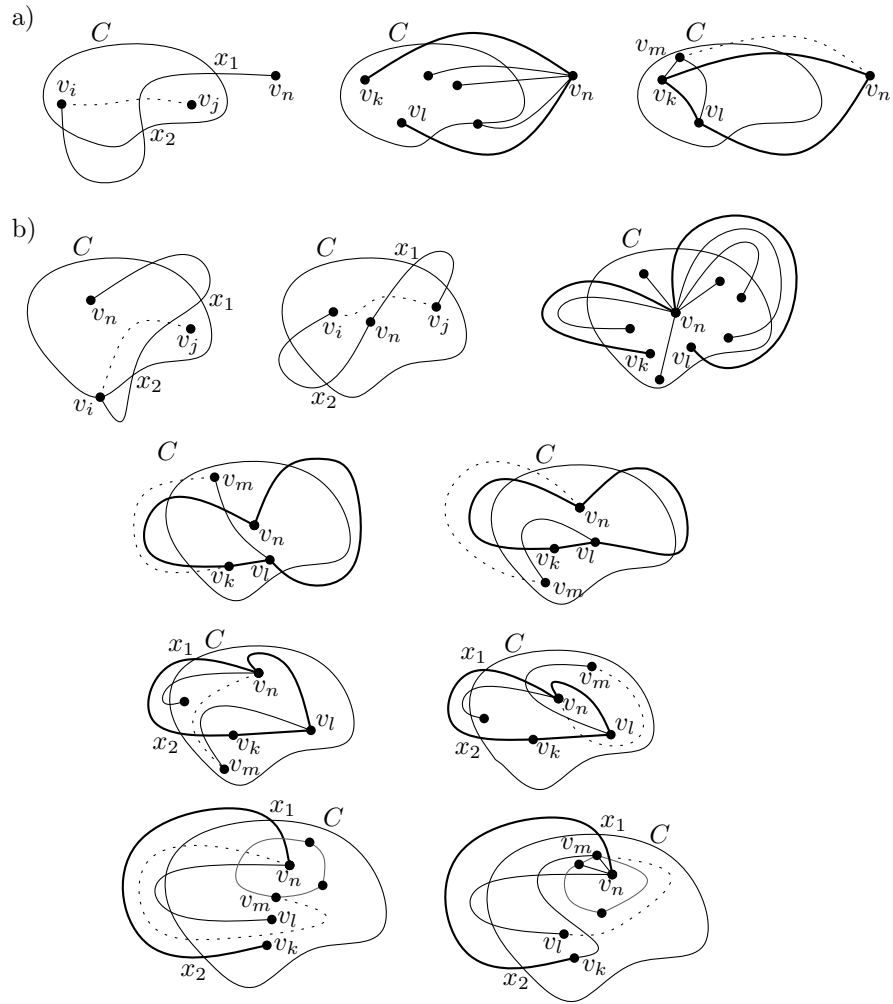}
  \caption{a) adding a vertex $v_n$ to the outer face. b) adding a vertex $v_n$ to a bounded face. Dotted curves represent forbidden edges.}
 \label{fig_caratheodory}
 \end{center}
\end{figure}

\smallskip
\noindent
{\bf b)} The vertex $v_n$ is in the interior of some bounded face of $D_{n-1}$. 
By a similar argument as in part a), every edge $v_nv_i$ has at most two crossings with $C$. See Figure~\ref{fig_caratheodory} b). If no edge incident with $v_n$ is incident with the outer face of $D_n$, then $C$ is the boundary of the outer face of $D_n$ and thus we let $\mathcal{T}_n=\mathcal{T}_{n-1}$. If two edges $v_nv_i$ and $v_nv_j$ cross $C$, they separate the closed region bounded by $C$ into two parts. The vertices $v_i$ and $v_j$ must be in the same part, otherwise the edge $v_iv_j$ would cross $v_nv_i$ or $v_nv_j$, which is forbidden. 

It follows that at most two edges incident with $v_n$, $v_nv_k$ and, possibly, $v_nv_l$, are incident with the outer face of $D_n$. All other edges $v_nv_i$ that cross $C$ do so in a ``nested fashion'' in the interval bounded by the crossings of $v_nv_k$ with $C$, or in the interval bounded by the crossings of $v_nv_l$ with $C$; see Figure~\ref{fig_caratheodory} b). Hence, if $v_nv_k$ and $v_nv_l$ are incident with the outer face, then the triangle $v_nv_kv_l$ covers all bounded faces of $D_n$ that lie outside $C$. 

If there is no triangle $v_mv_kv_l$ in $\mathcal{T}_{n-1}$ with the same orientation as $v_nv_kv_l$, we let $\mathcal{T}_n=\mathcal{T}_{n-1}\cup\{v_nv_kv_l\}$. If there is a triangle $v_mv_kv_l$ in $\mathcal{T}_{n-1}$ with the same orientation as $v_nv_kv_l$, then $v_m$ has to be inside $v_nv_kv_l$. For if $v_m$ was outside $v_nv_kv_l$ in the region bounded by $v_nv_k$, $v_nv_l$ and $C$, then one of the edges $v_mv_k$ or $v_mv_l$ would be forced to cross an adjacent edge or $C$. Similarly, if $v_m$ was in the other region outside $v_nv_kv_l$ and inside (or on) $C$, then the edge $v_mv_n$ would be forced to cross an adjacent edge or it would separate $v_nv_k$ or $v_nv_l$ from the outer face. Like in case a), if $v_m$ is inside $v_nv_kv_l$, then the orientation of $v_mv_kv_l$ implies that $v_mv_kv_l$ is covered by $v_nv_kv_l$. We let $\mathcal{T}_n=(\mathcal{T}_{n-1}\setminus\{v_mv_kv_l\})\cup\{v_nv_kv_l\}$.

We are left with the case when $v_nv_k$ is the only edge incident with $v_n$ and with the outer face. 
Let $x_1$ and $x_2$ be the crossings of $v_nv_k$ with $C$, so that $x_1$ is between $v_n$ and $x_2$. Without loss of generality, assume that the portion of the edge $v_nv_k$ starting at $x_1$ and ending at $x_2$ is oriented counter-clockwise on the boundary of the outer face. Let $v_nv_l$ be the edge following $v_nv_k$ clockwise in the rotation at $v_n$. 

If $v_nv_l$ does not cross $C$, then the triangle $v_nv_kv_l$ covers all bounded faces of $D_n$ outside $C$.
Similarly as in the previous case, we argue that if there is a triangle $v_mv_kv_l$ in $\mathcal{T}_{n-1}$ with the same orientation as $v_nv_kv_l$, then $v_m$ is inside $v_nv_kv_l$ and so $v_mv_kv_l$ is covered by $v_nv_kv_l$, otherwise the edge $v_nv_m$ would have have to cross some adjacent edge. Here we use the fact that no edge leaves $v_n$ outside the triangle $v_nv_kv_l$. Again, we let $\mathcal{T}_n=\mathcal{T}_{n-1}\setminus\{v_mv_kv_l\}\cup\{v_nv_kv_l\}$ or $\mathcal{T}_n=\mathcal{T}_{n-1}\cup\{v_nv_kv_l\}$, according to the existence of the triangle $v_mv_kv_l$ covered by $v_nv_kv_l$.

Finally, suppose that $v_nv_l$ crosses $C$. By induction, there is a triangle $v_mv_iv_j \in \mathcal{T}_{n-1}$ containing $v_n$ in its interior. Hence, each of the edges $v_nv_k$ and $v_nv_l$ crosses at least one edge of $v_mv_iv_j$. If $v_nv_k$ and $v_nv_l$ cross the same edge, say, $v_mv_i$, then the edge $v_nv_m$ also crosses $v_mv_i$, a contradiction. Otherwise, the region bounded by $v_nv_k$, $v_nv_l$ and $C$ that does not contain $x_2$, contains at least one vertex of the triangle $v_mv_iv_j$, say, $v_m$. Then it is impossible to draw the edges $v_mv_k$ and $v_mv_l$ so that the resulting drawing is simple. Therefore $v_nv_l$ cannot cross $C$ and we are finished.
\end{proof}

\begin{proof}[Proof of Theorem~\ref{theorem_local_char_shellable}]
The condition on $4$-tuples is clearly necessary. We show that it is also sufficient. Suppose that $D$ is a simple drawing of $K_n$ and that for some $i\in[n]$, the vertex $v_i$ is not incident with the outer face of the subgraph induced by the subset $\{v_1, v_2, \dots, v_i\}$ (the case with the subset $\{v_{i}, v_{i+1}, \dots, v_n\}$ is symmetric). By Lemma~\ref{lemma_caratheodory}, there is a triangle $v_jv_kv_l$ with $1\le j<k<l<i$ containing $v_i$ in its interior. In particular, $v_i$ is not incident with the outer face of the drawing of $K_4$ induced by the $4$-tuple $v_j,v_k,v_l,v_i$.
\end{proof}

\subsection{Shellable drawings and monotone drawings}\label{sub_shellable_not_monotone}
Here we show that shellable drawings form a more general class than monotone drawings. We also show how monotone drawings may be characterized as a special case of shellable drawings.

Two drawings $D_1$, $D_2$ of a graph $G=(V,E)$
are {\em weakly isomorphic\/} if for every two edges $e, f \in E$, $e$ and $f$ cross in $D_1$ if and only if they cross in $D_2$. 
Let $D$ be a simple drawing of $K_n$ with vertex set $\{v_1,v_2,\dots ,v_n\}$. We say that a sequence of vertices $v_1,v_2,\dots,v_n$ is an {\em $x$-monotone sequence\/} of $D$ if $v_1$ and $v_n$ are incident with the outer face of $D$ and $D$ is weakly isomorphic to a simple monotone drawing where $v_i=(i,0)$ for every $i\in [n]$.

We have the following characterization of $x$-monotone sequences in terms of shelling sequences.

\begin{lemma}
Let $D$ be a simple drawing of $K_n$. A sequence of vertices $v_1,v_2,\dots,v_n$ is an $x$-monotone sequence of $D$ if and only if it is a shelling sequence of $D$ and the path $v_1v_2\dots v_n$ does not cross itself.
\end{lemma}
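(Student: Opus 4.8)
The plan is to prove the two implications separately, using throughout that a sequence is a shelling sequence precisely when each $v_i$ lies on the outer face of both $D[\{v_1,\dots,v_i\}]$ and $D[\{v_i,\dots,v_n\}]$, and that weak isomorphism means exactly that $D$ and the monotone model cross in the same pairs of edges. The key detection tool will be Carath\'eodory's theorem (Lemma~\ref{lemma_caratheodory}): a vertex fails to be on the outer face of a sub-drawing exactly when it is trapped inside a triangle of that sub-drawing.

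First I would prove the forward implication. Assume $v_1,\dots,v_n$ is an $x$-monotone sequence, so $D$ is weakly isomorphic to a simple monotone drawing $\tilde D$ with $v_i=(i,0)$, and $v_1,v_n$ lie on the outer face of $D$. In $\tilde D$ the path edges $v_iv_{i+1}$ occupy pairwise interior-disjoint vertical strips, with consecutive ones adjacent, so the path does not self-cross in $\tilde D$; by weak isomorphism it does not self-cross in $D$ either. For shellability, suppose some $v_i$ is not on the outer face of $D[\{v_1,\dots,v_i\}]$ (the other interval is symmetric, using $v_1$ in place of $v_n$). By Lemma~\ref{lemma_caratheodory} there is a triangle $T=v_av_bv_c$ with $a<b<c<i$ whose interior contains $v_i$. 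Since $v_n$ is on the outer face of $D$ it cannot be interior to any triangle of $D$; hence $v_n\neq v_i$, so $i<n$, and $v_n$ lies outside $T$. Now $T$ is a simple closed curve with $v_i$ inside and $v_n$ outside, so the edge $v_iv_n$ crosses the three edges of $T$ an \emph{odd} number of times in total. In $\tilde D$, however, the vertices $v_a,v_b,v_c,v_i$ induce the same crossing-free $K_4$, and since $\tilde D$ is monotone its largest-index vertex $v_i$ lies on the outer face of this $K_4$, hence outside $T$; together with $v_n$ (again outside $T$) this forces $v_iv_n$ to cross the edges of $T$ an \emph{even} number of times in $\tilde D$. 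As weak isomorphism makes $v_iv_n$ cross each edge of $T$ equally often in $D$ and in $\tilde D$, the two parities must agree, a contradiction. Thus $v_1,\dots,v_n$ is a shelling sequence.

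For the converse I would construct, from a shelling sequence with non-self-crossing path, a simple monotone drawing weakly isomorphic to $D$. The outer-face condition on $v_1,v_n$ is immediate from shellability (the interval $[1,n]$). The substance is to realize the crossing pattern of $D$ monotonically, which I would attempt by induction on $n$: since $v_n$ is on the outer face, delete it to get a simple shellable drawing $D^-=D[\{v_1,\dots,v_{n-1}\}]$ whose path is still non-self-crossing, apply the induction hypothesis to obtain a simple monotone $\tilde D^-$ weakly isomorphic to $D^-$ with $v_i=(i,0)$, and then re-insert $v_n$ at $(n,0)$, drawing the edges $v_nv_i$ as $x$-monotone arcs entering from the right. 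Because $v_n$ is on the outer face of $D$ its incident edges meet it in a single linear order (Observation~\ref{obs_outer}) and the path edge $v_{n-1}v_n$ is crossing-free, which is what I would use to add the star at $v_n$ reproducing the crossings prescribed by $D$.

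The hard part will be exactly this re-insertion, that is, showing that the \emph{local} hypotheses (shellability and a non-self-crossing path) force the \emph{global} crossing pattern of $D$ to be monotone-realizable. The obstruction is that a monotone drawing can never have two edges $v_pv_q$ and $v_rv_s$ with $p<q<r<s$ crossing, since they lie in disjoint vertical strips; so I must exclude every such ``separated'' crossing in $D$, not merely the consecutive ones ruled out by the path hypothesis. I expect to obtain this by combining Carath\'eodory's theorem with the forbidden-configuration analysis behind Theorem~\ref{theorem_classif_semisimple}: a separated crossing, in the presence of shellability, should propagate through the intermediate vertices $v_q,\dots,v_r$ either to a trapped vertex, contradicting shellability via Lemma~\ref{lemma_caratheodory}, or to a forbidden $5$-tuple whose unavoidable double crossing shows up as a self-intersection of the path. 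Once separated crossings are excluded, the ordering together with the non-self-crossing path lets me straighten $D$ to an $x$-bounded drawing, and the monotonization of Fulek {\it et al.}~\cite{FPSS13_ht_monotone}, refined by the crossing-reducing redrawing of Proposition~\ref{proposition_prekreslovaci} (which preserves crossing parities without increasing crossing numbers, hence keeps the drawing simple), yields the required simple monotone drawing weakly isomorphic to $D$.
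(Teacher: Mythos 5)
Your forward implication is essentially correct, and is in fact more careful than the paper, which dismisses that direction as obvious: the parity count of crossings of $v_iv_n$ with a Carath\'eodory triangle $T=v_av_bv_c$ does give a contradiction. One small repair: the induced $K_4$ on $\{v_a,v_b,v_c,v_i\}$ need \emph{not} be crossing-free in $D$ (the edge $v_iv_a$ may cross $v_bv_c$ once even though $v_i$ lies inside $T$), so you cannot argue via "the same crossing-free $K_4$" in $\tilde D$. What you actually need is only that both $v_i$ and $v_n$ lie outside $T$ in $\tilde D$, and this follows directly from monotonicity: each edge of $T$ stays in the vertical strip of its endpoints, so $T$ lies in the strip $\left[x(v_a),x(v_c)\right]$, to the left of $v_i$ and $v_n$. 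With that fix the parity argument goes through.

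The backward implication, however, has a genuine gap at exactly the point you flag as ``the hard part.'' You correctly identify the crux --- no two edges $v_pv_q$, $v_rv_s$ with $p<q<r<s$ may cross --- but you never prove it; ``should propagate \dots to a trapped vertex \dots or to a forbidden $5$-tuple'' is a hope, not an argument, and the mechanism you guess at is not the one that works. The paper's proof of this claim is the technical core of the lemma: assuming such a crossing exists, pick $(i,j,k,l)$ with $(l-i,j-i)$ lexicographically minimal; shellability supplies unbounded escape curves $\gamma_k,\gamma_l$ from $v_k,v_l$ avoiding the relevant induced subdrawings; minimality shows $v_kv_l$ crosses no edge of the subpath $P_{i,j}$ except possibly $v_iv_{i+1}$, so the separation of $v_i$ from $v_j$ by the bi-infinite curve $\gamma_k\cup v_kv_l\cup\gamma_l$ forces $j=i+1$; a symmetric argument with $\gamma_i,\gamma_j$, now using that the Hamiltonian path does not cross itself, gives the contradiction. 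Moreover, even granting the claim, your finishing pipeline does not close the proof: ``straightening $D$ to an $x$-bounded drawing'' begs the question (that $D$ has the crossing pattern of such a drawing is essentially what is being proved), and Fulek {\it et al.} followed by Proposition~\ref{proposition_prekreslovaci} only outputs a \emph{semisimple} monotone drawing with the correct crossing parities --- independent edges may still cross twice, which destroys weak isomorphism with the simple drawing $D$. The paper instead realizes the order type of $D$ by a semisimple monotone drawing via Proposition~\ref{prop_shellable_semisimple} (where the no-separated-crossing claim is precisely what makes odd $4$-tuples match pair-by-pair; cf.\ Figure~\ref{fig_shellable_monotone_K4}), and then proves that this drawing is simple by excluding forbidden $5$-tuples through Theorem~\ref{theorem_classif_semisimple}; some such final simplicity argument is missing from your plan as well.
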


\begin{proof}
The ``only if'' part is obvious. Let $v_1,v_2,\dots , v_n$ be a shelling sequence such that the path $v_1v_2\dots v_n$ does not cross itself. We claim that for every $v_i,v_j,v_k,v_l$ with $1\le i<j<k<l\le n$, the path $v_iv_jv_kv_l$ does not cross itself. 
Let $H$ be the drawing of $K_4$ induced by the vertices $v_i,v_j,v_k,v_l$.

\begin{figure}
 \begin{center}
   \includegraphics[scale=1]{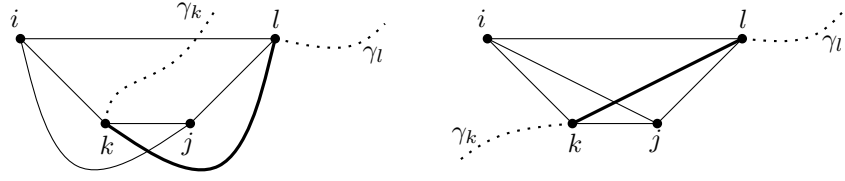}
  \caption{Drawings of $K_4$ with shelling sequence $v_i,v_j,v_k,v_l$ where the edges $v_iv_j$ and $v_kv_l$ cross. Dotted curves cannot cross the path $P_{i,j}$. Thick curves cannot cross the path $P_{i,j}$ either except for the case $j=i+1.$}
 \label{fig_shell_noncrossing_k4}
 \end{center}
\end{figure}

Suppose for contrary that the path $v_iv_jv_kv_l$ in $H$ crosses itself. That is, the edges $v_iv_j$ and $v_kv_l$ cross. Let $i,j,k,l$ be such a $4$-tuple with the pair $(l-i,j-i)$ lexicographically smallest.
The drawing $H$ is homeomorphic to one of the drawings in Figure~\ref{fig_shell_noncrossing_k4}. 
Since $v_l$ is on the outer face of the complete subgraph $D_{i,l}$ with vertices $v_i,v_{i+1},\dots,v_l$, there is an unbounded curve $\gamma_l$ starting at $v_l$ going to infinity and avoiding all edges of $D_{i,l}$. Similarly, there is an unbounded curve $\gamma_k$ starting at $v_k$ going to infinity and avoiding all edges of the complete graph induced by $v_i,v_{i+1},\dots,v_k$. In particular, $\gamma_l$ and $\gamma_k$ do not cross the path $P_{i,j}=v_iv_{i+1}\dots v_j$, the curve $\gamma_l$ lies completely in the outer face of $H$, and $\gamma_k$ lies completely outside the triangle $v_iv_jv_k$. 


By the minimality of $l-i$, the edge $v_kv_l$ crosses no edge $v_{i+a}v_{i+a+1}$ with $1\le a\le j-i-1$. By the minimality of $j-i$, the edge $v_iv_{i+1}$ does not cross $v_kv_l$, unless $j=i+1$. 
Since the double-infinite curve formed by $\gamma_k,v_kv_l$ and $\gamma_l$ separates $v_i$ from $v_j$, it must cross the path $P_{i,j}$. This implies that $j=i+1$.

Similarly, there are unbounded curves $\gamma_i$ and $\gamma_j$ starting at $v_i$ and $v_j$, respectively, that do not cross the path $P_{k,l}=v_kv_{k+1}\dots v_l$, the curve $\gamma_i$ lies completely in the outer face of $H$, and $\gamma_j$ lies completely outside the triangle $v_jv_kv_l$. Since $j=i+1$ and by the assumption, the edge $v_iv_j$ does not cross the path $P_{k,l}$ either. The double-infinite curve formed by $\gamma_i, v_iv_j$ and $\gamma_j$ thus separates $v_k$ from $v_l$ but does not cross $P_{k,l}$; this is a contradiction.

Since the path $v_iv_jv_kv_l$ does not cross itself, the order type of $H$ determines the drawing up to an isotopy. Indeed, the drawings in Figure~\ref{fig_shellable_monotone_K4} represent, up to relabeling, the only two isotopy classes of simple shellable drawings of $K_4$ that have the same order type. There are two possible shelling sequences common for both drawings. For the shelling sequence $1,2,3,4$, the corresponding path is noncrossing only in the right drawing. For the shelling sequence $1,3,2,4$, the corresponding path is noncrossing only in the left drawing.

Let $\sigma$ be the order type of $D$. By the proof of Proposition~\ref{prop_shellable_semisimple}, there is a semisimple monotone drawing $D'$ with signature function $\sigma$ such that two edges cross oddly in $D$ if and only if they cross oddly in $D'$.

It remains to show that $D'$ is simple. By Theorem~\ref{theorem_classif_semisimple}, it is sufficient to show that there is no $5$-tuple $(a,b,c,d,e)$ with $a<b<c<d<e$ such that $\sigma(a,b,e)=\sigma(a,d,e)=\sigma(b,c,d)=\overline{\sigma(a,c,e)}=\xi$, where $\xi \in \{+,-\}$. Suppose for contrary that there is such a $5$-tuple. By symmetry, we may assume that $\xi=+$. The vertices $v_a,v_b,v_c,v_d,v_e$ induce a shellable drawing $K$ of $K_5$ in $D$. We may deform the plane by an isotopy so that $v_a=(0,0), v_e=(1,0)$, and so that all edges of $K$ are drawn between the vertical lines going through $v_a$ and $v_e$. From $\sigma(a,b,e)= +$ and $\sigma(a,c,e) = -$ we have $\sigma(a,b,c,e)={+}{+}{-}{-}$. Similarly, from $\sigma(a,c,e)=-$ and $\sigma(a,d,e)=+$ we have $\sigma(a,c,d,e)={-}{-}{+}{+}$. This further implies that $\sigma(a,b,c,d)={+}{-}{-}{+}$. In particular, the edges $v_av_c$ and $v_bv_d$ cross. The signatures also imply that $v_b$ and $v_d$ are below the edge $v_av_e$ and $v_c$ is above the edge $v_av_e$. For a simple drawing this means that the edge $v_bv_d$ is below $v_av_e$ and the relative interior of the edge $v_av_c$ is above $v_av_e$, therefore the edges $v_av_c$ and $v_bv_d$ cannot cross; a contradiction.
\end{proof}

One may notice the following apparent difference between $x$-monotone and shellable sequences: some drawings of $K_n$ have much more shellable sequences than $x$-monotone sequences. For example, for the convex geometric drawing of $K_n$, all $n!$ permutations of vertices are shelling sequences, whereas at most $n\cdot 2^{n-2}$ permutations of vertices, inducing a noncrossing Hamiltonian path, are $x$-monotone sequences. 

To show that shellable drawings are indeed more general than monotone drawings, we provide an example of a shellable drawing that has no $x$-monotone sequence.

\begin{theorem}
The drawing in Figure~\ref{fig_shellable_vs_monotone} is a simple shellable drawing of $K_9$ which is not weakly isomorphic to a simple monotone drawing.
\end{theorem}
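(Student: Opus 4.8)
The plan is to handle the two assertions separately. \emph{Simplicity} of the drawing in Figure~\ref{fig_shellable_vs_monotone} is read off directly from the picture. For \emph{shellability} I would exhibit one concrete shelling sequence --- the labelling $v_1,\dots,v_9$ drawn in the figure --- and verify it using the local characterization of Theorem~\ref{theorem_local_char_shellable}: it suffices to check that for every $4$-tuple $v_i,v_j,v_k,v_l$ with $i<j<k<l$ the induced $K_4$ has $v_i$ and $v_l$ on its outer face. Since a crossing $K_4$ automatically has all four vertices on its outer boundary, the only tuples needing attention are the crossing-free ones; for these, Carath\'eodory's theorem for simple complete graphs (Lemma~\ref{lemma_caratheodory}) identifies the central vertex, and one checks it is never $v_i$ or $v_l$. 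This finite check establishes the first part of the statement.

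For the second part I would argue by contradiction: suppose $D$ is weakly isomorphic to a simple monotone drawing $M$, and transfer the left-to-right vertex order of $M$ to $D$, obtaining an ordering $v_1,\dots,v_9$ of the vertices of $D$. Two features of monotone drawings are \emph{weak-isomorphism invariants}, since they refer only to which pairs of edges cross, and hence transfer to $D$ along this ordering: (A) the Hamiltonian path $v_1v_2\cdots v_9$ is noncrossing (in a monotone drawing, consecutive-in-$x$ edges lie in essentially disjoint vertical slabs), and (B) for every $4$-subset written in the induced order $a<b<c<d$, the edges $ab$ and $cd$ never cross, so inside a crossing $K_4$ the crossing pair is forced to be $\{ac,bd\}$ or $\{ad,bc\}$, and never the ``consecutive'' matching $\{ab,cd\}$. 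By the preceding lemma characterizing $x$-monotone sequences (together with the discussion opening Subsection~\ref{sub_shellable_not_monotone}, which equates the monotone realizability of $D$ with the existence of an $x$-monotone sequence), the transferred order is moreover subject to the shelling restriction of Theorem~\ref{theorem_local_char_shellable}: in every crossing-free induced $K_4$ the central vertex must be one of the two \emph{middle} vertices in the order.

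The core step is then purely combinatorial: show that \emph{no} linear order on the nine vertices of $D$ can simultaneously satisfy the shelling restriction, the noncrossing-path condition (A), and the matching condition (B). Here I would use the symmetry group of the particular drawing to cut the candidate orders down to a few representatives; the strong rigidity coming from the many crossing-free $K_4$'s, whose central vertices are pinned down by Lemma~\ref{lemma_caratheodory}, leaves only a short list of possible positions for the endpoints $v_1,v_9$ and their neighbours. For each surviving candidate I would then exhibit either a crossing $4$-subset of $D$ whose induced order forces the forbidden consecutive matching $\{ab,cd\}$, or two path edges $v_av_{a+1}$ and $v_bv_{b+1}$ that cross in $D$, contradicting (B) or (A) respectively. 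The main obstacle is precisely this case analysis: keeping it finite and short depends entirely on exploiting the symmetry of the drawing and on the fact that shellability together with Carath\'eodory's theorem leaves very little freedom in ordering the vertices. Once that rigidity is made explicit, each remaining case should collapse to a single forbidden crossing, completing the proof that $D$ is not weakly isomorphic to any simple monotone drawing.
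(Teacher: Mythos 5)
Your framework for the negative part is the same as the paper's: transfer the $x$-order of a hypothetical monotone drawing to the drawing $S_9$, and constrain that order using the lemma characterizing $x$-monotone sequences (shelling plus noncrossing path) together with the crossing restrictions of monotone drawings (your conditions (A) and (B)). The genuine gap is that the core of the proof is never carried out: you do not show that no ordering of the nine vertices satisfies these constraints, you only assert that a symmetry-reduced case analysis ``should collapse'' each candidate. Nothing in the proposal bounds the number of candidates or identifies which crossings kill them; with $9!$ orderings and a drawing whose only exploitable symmetry is the one exchanging the two ``ends'' (the paper uses it solely to assume $1\prec 5$), the claim that rigidity ``leaves only a short list'' is exactly the statement that needs proof. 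As written, the proposal establishes necessary conditions on a hypothetical order but proves nothing specific about $S_9$.

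What makes the verification short in the paper is a structural step your plan is missing. Three particular crossing $4$-subsets of $S_9$, namely $\{1,2,4,5\}$, $\{1,2,a,b\}$ and $\{4,5,e,f\}$, have \emph{unique} $x$-monotone sequences up to reversal --- uniqueness that does not follow from their crossing pattern alone (a one-crossing $K_4$ admits many monotone orders, nested and interleaved) but from the outer faces these sub-drawings inherit inside $S_9$, imposed through the shelling requirement. This forces $1\prec a\prec b\prec 2\prec 4\prec f\prec e\prec 5$ once $1\prec 5$ is assumed, i.e., it pins down eight of the nine vertices at a stroke. The ninth vertex $x$ is then squeezed by two ``uncovering'' constraints of the shelling condition: deleting $1$ alone does not expose $a$ on the outer face, so $x\prec a$; deleting $5$ alone does not expose $e$, so $e\prec x$; together with $a\prec e$ this is a contradiction. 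Your case analysis would have to rediscover an equivalent forcing argument, and until it does, the proposal is an outline rather than a proof. (Your treatment of the positive part --- exhibiting the shelling sequence $1,4,f,e,x,a,b,2,5$ and checking it via Theorem~\ref{theorem_local_char_shellable} --- is fine; the paper treats that part as immediate from the figure.)
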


\begin{figure}
 \begin{center}
   \includegraphics[scale=1]{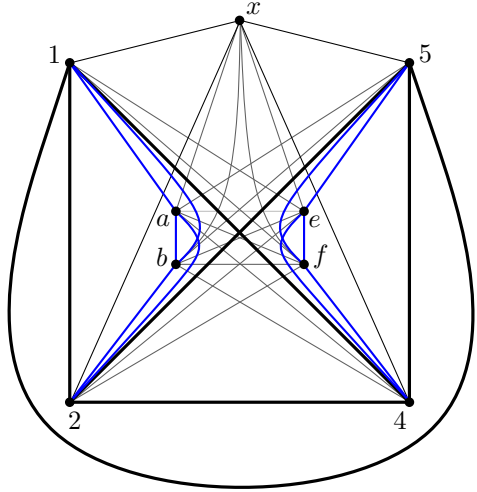}
  \caption{A simple drawing $S_9$ of $K_9$ with shelling sequence $1,4,f,e,x,a,b,2,5$ which has no $x$-monotone sequence.}
 \label{fig_shellable_vs_monotone}
 \end{center}
\end{figure}

\begin{proof}
Clearly, the sequence $1,4,f,e,x,a,b,2,5$ is a shelling sequence of the drawing $S_9$ in Figure~\ref{fig_shellable_vs_monotone}. Suppose that $\mu$ is an $x$-monotone sequence of $S_9$. We write $v \prec w$ for vertices $v,w$ if $v$ precedes $w$ in $\mu$. By symmetry, we may assume that $1\prec 5$. The subgraphs induced by $4$-tuples $\{1,2,4,5\}$, $\{1,2,a,b\}$ and $\{4,5,e,f\}$ have unique $x$-monotone sequences, up to reversal. In particular, we have $1\prec 2 \prec 4 \prec 5$, which in turn implies that $1\prec a \prec b \prec 2 \prec 4 \prec f \prec e \prec 5$. To uncover the vertex $a$, it is not sufficient to remove the vertex $1$, we have to remove at least one more vertex. Since all vertices except for $x$ are preceded by $a$ in $\mu$, we have $x\prec a$. Similarly, to uncover the vertex $e$, it is not sufficient to remove the vertex $5$, and the only available vertex is $x$. Therefore, $e \prec x$. These conditions cannot be fulfilled, thus $S_9$ has no $x$-monotone sequence.
\end{proof}

\subsection{Crossing number and $k$-edges in weakly semisimple drawings}
Here we show a generalization of Lemma~\ref{lemma2} to weakly semisimple drawings, which may be used to generalize Theorem~\ref{theorem_1} and the result of \'Abrego {\em et al.}\mycite{aamrs13_shellable} to weakly semisimple $s$-shellable drawings with $s\ge n/2$. As in Proposition~\ref{prop_shellable_weakly_semisimple}, the equality has to be replaced by an inequality.
Since the orientation of triangles and hence the order type can be still defined in weakly semisimple drawings (see the definition before Proposition~\ref{prop_shellable_weakly_semisimple}), the notions of $k$-edges, ${\le}k$-edges, ${\le}{\le}k$-edges and separations  generalize to weakly semisimple drawings as well.

\begin{lemma}
\label{lemma3}
For every weakly semisimple drawing $D$ of $K_n$ we have
\[
{\rm ocr}(D) \ge \;2 \sum_{k=0}^{\lfloor n/2 \rfloor-2} {E_{{\le}{\le}k}(D)}-\frac{1}{2}{n \choose 2}\left\lfloor\frac{n-2}{2}\right\rfloor -\frac{1}{2}\left(1+(-1)^n\right)E_{{\le}{\le}\lfloor n/2 \rfloor-2}(D).
\]
\end{lemma}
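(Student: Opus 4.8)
The plan is to follow the proof of Lemma~\ref{lemma2} almost verbatim, replacing the identity ${\rm ocr}(H)+E_1(H)=3$ for semisimple drawings $H$ of $K_4$ by the inequality ${\rm ocr}(H)+E_1(H)\ge 3$ for weakly semisimple drawings $H$ of $K_4$. Since the orientation of every triangle, and hence the order type of $D$, is well defined for weakly semisimple drawings (see the definition preceding Proposition~\ref{prop_shellable_weakly_semisimple}), the notions of $k$-edges, ${\le}{\le}k$-edges and separations carry over without change, and Fact~(i) from the proof of Lemma~\ref{lemma2} --- every $k$-edge lies in exactly $k(n-2-k)$ separations --- remains valid, being a statement about the order type only. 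Moreover, because $D$ is weakly semisimple, every pair of edges crossing an odd number of times is independent and hence is contained in a unique induced $K_4$; consequently ${\rm ocr}(D)=\sum_Q {\rm ocr}(D[Q])$, where the sum runs over all $4$-element vertex sets $Q$ and $D[Q]$ denotes the induced sub-drawing.

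The heart of the argument is the $K_4$ inequality
\[{\rm ocr}(H)+E_1(H)\ge 3 \qquad\text{for every weakly semisimple drawing } H \text{ of } K_4,\]
which I would prove in two independent parts. First, $E_1(H)\ge 2$ holds for \emph{every} assignment of orientations to the four triangles of $H$: the value $E_1(H)$ is a function of these four signs alone (for each of the six edges, being a $1$-edge is an equality/inequality condition on a pair of the four triangle orientations), and a direct inspection of the sixteen sign patterns shows $E_1(H)\in\{2,3,6\}$, so in particular $E_1(H)\ge 2$; no realizability of the order type is needed here. Second, if ${\rm ocr}(H)=0$, then all six pairs of edges of $H$ cross evenly --- the adjacent pairs because $H$ is weakly semisimple --- so the weak Hanani--Tutte theorem applies exactly as in the proof of Proposition~\ref{prop_shellable_weakly_semisimple}: $H$ has a crossing-free redrawing with the same order type, which is therefore realizable without crossings and hence of non-convex type, whence $E_1(H)=3$. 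Combining the two parts gives ${\rm ocr}(H)+E_1(H)\ge 3$ in all cases: when ${\rm ocr}(H)\ge 1$ the bound $E_1(H)\ge 2$ suffices, and when ${\rm ocr}(H)=0$ we have $E_1(H)=3$.

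It then remains to assemble the global bound. Summing the $K_4$ inequality over all ${n\choose 4}$ sets $Q$ yields ${\rm ocr}(D)+\sum_Q E_1(D[Q])\ge 3{n\choose 4}$. Here $\sum_Q E_1(D[Q])$ is exactly the total number of separations of $D$, which by Fact~(i) equals $\sum_{k=0}^{\lfloor n/2\rfloor-1} k(n-2-k)E_k(D)$, so that
\[{\rm ocr}(D)\ge 3{n\choose 4}-\sum_{k=0}^{\lfloor n/2\rfloor-1}k(n-2-k)E_k(D).\]
Finally I would invoke the purely algebraic identity already established in Lemma~\ref{lemma1}, which rewrites $3{n\choose 4}-\sum_k k(n-2-k)E_k(D)$ in terms of the quantities $E_{{\le}{\le}k}(D)$ and uses no geometric property of the drawing, to transform the right-hand side into the claimed expression. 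The only genuinely new obstacle is the case ${\rm ocr}(H)=0$ of the $K_4$ inequality, where the weak Hanani--Tutte theorem is essential for ruling out a convex order type; the combinatorial bound $E_1(H)\ge 2$ and the double counting of separations are routine.
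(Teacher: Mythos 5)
Your global architecture is sound and matches the paper's: reduce to the $K_4$ inequality ${\rm ocr}(H)+E_1(H)\ge 3$, double-count separations via Fact~(i), and finish with the purely algebraic rewriting from Lemma~\ref{lemma1}; your enumeration of the sixteen sign patterns giving $E_1(H)\in\{2,3,6\}$ is also a correct (if brute-force) substitute for the paper's parity argument on the separation graph. The genuine gap is in the case ${\rm ocr}(H)=0$. You assert that the weak Hanani--Tutte theorem ``applies exactly as in the proof of Proposition~\ref{prop_shellable_weakly_semisimple},'' but that proof crucially uses shellability: it needs the two designated vertices $v_i$ and $v_l$ to lie on the outer face of $H$, so that $H$ can be deformed into a vertical strip with $v_i$ leftmost and $v_l$ rightmost, the auxiliary vertices $y,z$ can be attached to anchor the outer face, and afterwards every triangle contains one of $v_i,v_l$, whose rotation pins down its orientation. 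An arbitrary weakly semisimple drawing of $K_4$ with ${\rm ocr}=0$ need not have even one vertex on its outer face: take a planar drawing of $K_4$, push a finger of one outer edge across an interior edge of the opposite pair (crossing an adjacent edge twice and the opposite edge twice, all even), and choose on the sphere the resulting vertex-free lens face as the outer face. For such a drawing your citation gives nothing, and the underlying difficulty is real: weak Hanani--Tutte preserves only the rotation system, and the rotation system does not determine the order type --- the same spherical drawing of planar $K_4$, read with different faces as the outer face, has its separation graph equal to the star at different vertices, so drawings with identical rotation systems can have different order types.

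This outer-face issue is exactly why the paper's own proof of Lemma~\ref{lemma3} is as long as it is: it first applies a sequence of edge flips to reach a drawing $D''$ with a vertex $v$ on the outer face; it observes that each edge flip changes the separation graph by a symmetric difference with a $4$-cycle, and that the property ``the separation graph is isomorphic to $K_{1,3}$'' is invariant under this operation, so the conclusion for $D''$ transfers back to $D$; and it then runs a Hanani--Tutte argument adapted to having only \emph{one} anchored vertex, adding a single auxiliary vertex $x$, recovering the three triangles through $v$ from the rotation at $v$, and handling the fourth triangle $w_1w_2w_3$ by a separate winding-number argument. None of this appears in your proposal, and it cannot be absorbed into the phrase ``exactly as in the Proposition,'' because it is the entire content of the ${\rm ocr}=0$ case. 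To repair the proof you would need to supply the edge-flip reduction together with the star-invariance under symmetric difference with $C_4$ (or some other mechanism that controls the outer face before Hanani--Tutte is invoked).
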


\begin{proof}
The lemma follows in the same way as Lemma~\ref{lemma2} or Lemma~\ref{lemma1}, after proving that every weakly semisimple drawing $D$ of $K_4$ satisfies the inequality ${\rm ocr}(D)+E_1(D)\ge 3$.
The equality is not always attained as there are weakly semisimple drawings of $K_4$ with odd crossing number $3$ and with six separations; see Figure~\ref{fig_weakly_shell_K4_ocr2}, right.

Let $D$ be a weakly semisimple drawing of $K_4$. The {\em separation graph\/} of $D$ is the subgraph of $D$ formed by the $1$-edges in $D$.
The separation graph depends only on the order type of $D$. Every order type can be obtained from each other by changing the orientation of some triangles. By changing the orientation of a triangle $uvw$, the edges $uv, uw, vw$ change from $0$-edges to $1$-edges and vice versa. It follows that the degree of each vertex in the separation graph either remains the same or changes by $2$. Since in the planar drawing of $K_4$ the separation graph is isomorphic to $K_{1,3}$, it follows that the separation graph of $D$ has all vertices of odd degree. That is, it is isomorphic to $K_2+K_2, K_{1,3}$, or $K_4$. In particular, $E_1(D)\ge 2$. 

Therefore, the inequality is proved for drawings with ${\rm ocr}(D)\ge 1$. Now suppose that ${\rm ocr}(D)=0$. We show that the separation graph of $D$ is isomorphic to $K_{1,3}$. We achieve this by transforming $D$ into a drawing $D''$ by a sequence of edge flips and then to a planar drawing $D'$ which has the same order type as $D''$. Performing the steps in reverse order will imply that the separation graph of each of $D', D''$ and $D$ is isomorphic to $K_{1,3}$.

Every edge flip (see the definition in the proof of Lemma~\ref{lemma2}) in a drawing of $K_4$ changes the orientation of two adjacent triangles. The separation graph is thus transformed by taking the symmetric difference with a cycle $C_4$. Clearly, if the separation graph is isomorphic to $K_{1,3}$, then its symmetric difference with arbitrarily positioned $C_4$ is isomorphic to $K_{1,3}$ as well. We may transform $D$ by a sequence of edge flips into a drawing $D''$ which has at least one vertex $v$ on the outer face. Let $w_1,w_2,w_3$ be the other three vertices of $D''$, so that the initial portions of the edges $vw_1$ and $vw_3$ are incident with the outer face of $D''$ and the rotation at $v$ is $w_1, w_2, w_3$. See Figure~\ref{fig_ht_blackbox}, left.

\begin{figure}
 \begin{center}
   \includegraphics[scale=1]{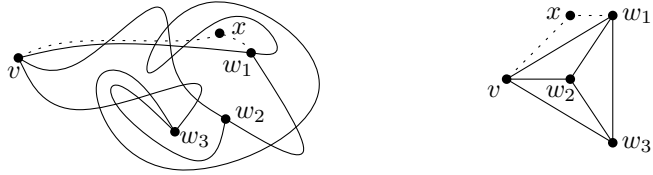}
  \caption{Left: adding an auxiliary vertex and two edges to a drawing of $K_4$ before applying the weak Hanani--Tutte theorem. Right: a planar drawing of the extended graph with the same rotation system.}
 \label{fig_ht_blackbox}
 \end{center}
\end{figure}

We extend the drawing $D''$ by adding one auxiliary vertex $x$ close to $w_1$ and edges $vx$ and $xw_1$, so that $x$ follows immediately after $v$ in the rotation at $w_1$, the rotation at $v$ is $x, w_1, w_2, w_3$, the triangle $vxw_1$ is oriented clockwise and the path $vxw_1$ is drawn close to the edge $vw_1$. We denote this new drawing as $K$.

Since every two edges cross evenly in $D''$, the same is true for the drawing $K$ and thus we may apply the weak Hanani--Tutte theorem to $K$. We obtain a planar drawing $K'$ with the same rotation system as $K$. We may assume that $vxw_1w_3$ forms a boundary of the outer face of $K'$. See Figure~\ref{fig_ht_blackbox}, right. Let $D'$ be the subgraph of $K'$ obtained after removing $x$ and its adjacent edges. The orientations of all three triangles incident to $v$ are the same in $D''$ and in $D'$, since $v$ is on the outer face in both drawings and the rotation at $v$ is the same in $K$ and in $K'$.

It remains to compare the orientation of the triangle $w_1w_2w_3$ in $D''$ and $D'$. 
Let $\gamma$ ($\gamma'$) be the closed curve formed by the edges of the triangle $w_1w_2w_3$ in $K$ ($K'$, respectively). Since the curve $vx$ crosses every edge of $D''$ an even number of times, the winding number of $\gamma$ around $x$ has the same parity as the winding number of $\gamma$ around $v$. Since $v$ is in the outer face of $D''$, both winding numbers are even. Since $x$ is outside $\gamma'$ in $K'$, the winding number of $\gamma'$ around $x$ is even as well. Together with the fact that in both drawings $K$ and $K'$, the rotation at $w_1$ is the same, this implies that the triangle $w_1w_2w_3$ is oriented counter-clockwise in both drawings. Therefore, $D''$ and $D'$ have the same order type.
\end{proof}

Combining Lemma~\ref{lemma3} with the proof by \'Abrego {\em et al.}\mycommal\mycite{aamrs13_shellable}\mycommar we obtain the following generalization.

\begin{corollary}
Let $s\ge n/2$ and let $D$ be a weakly semisimple $s$-shellable drawing of $K_n$. Then $\text{ocr}(D)\ge Z(n).$
\end{corollary}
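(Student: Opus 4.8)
The plan is to feed a lower bound on the number of ${\le}{\le}k$-edges, valid for $s$-shellable drawings, into the inequality of Lemma~\ref{lemma3}. Since the orientation of triangles, and hence the order type, the $k$-edges, the ${\le}{\le}k$-edges and the separations are all well defined for weakly semisimple drawings (as explained before Proposition~\ref{prop_shellable_weakly_semisimple}), Lemma~\ref{lemma3} applies to $D$ and gives
\[
{\rm ocr}(D) \ge 2 \sum_{k=0}^{\lfloor n/2 \rfloor-2} E_{{\le}{\le}k}(D)-\frac{1}{2}{n \choose 2}\left\lfloor\frac{n-2}{2}\right\rfloor -\frac{1}{2}\left(1+(-1)^n\right)E_{{\le}{\le}\lfloor n/2 \rfloor-2}(D).
\]
It therefore suffices to prove the bound $E_{{\le}{\le}k}(D)\ge 3{{k+3}\choose{3}}$ for every $k$ with $0\le k\le \lfloor n/2\rfloor-2$: substituting these into the right-hand side yields exactly $Z(n)$ by the same computation that derives Theorem~\ref{theorem_1} from Lemma~\ref{lemma2} and Theorem~\ref{thm2} (after collecting terms, every $E_{{\le}{\le}k}(D)$ occurs with a nonnegative coefficient---for even $n$ the coefficient of $E_{{\le}{\le}\lfloor n/2\rfloor-2}(D)$ drops from $2$ to $1$---so substituting a lower bound is legitimate).

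The core of the argument is thus to establish this ${\le}{\le}k$-edge bound for weakly semisimple $s$-shellable drawings, which I would do by repeating the induction in the proof of Theorem~\ref{thm2}, but driving it by the shelling sequence $v_1,\dots,v_s$ rather than by the $x$-coordinate order. For the inductive step I remove the last shelling vertex $v_s$: the resulting drawing $D'$ on $n-1$ vertices is again weakly semisimple and, with shelling sequence $v_1,\dots,v_{s-1}$, is $(s-1)$-shellable, because the induced subgraphs occurring in the definition of $(s-1)$-shellability of $D'$ are exactly those occurring for $D$. The two $i$-edges incident with $v_s$ are identified by Observation~\ref{obs_outer}, whose statement is purely about the rotation at a vertex incident with the outer face and the resulting triangle orientations, and which therefore remains valid for weakly semisimple drawings once the winding-number orientation is used. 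The contributions of the invariant ${\le}k$-edges, that is, Lemma~\ref{lemma_o_vnitrnich_vrcholech} and Corollary~\ref{cor_invariant}, are reproved in the same order-type terms by applying Observation~\ref{obs_outer} at each $v_i$ inside the subgraph induced by $V\setminus\{v_1,\dots,v_{i-1}\}$; shellability guarantees that $v_i$ is incident with the outer face of this subgraph (the case $j=s$ of the definition), which is the only place where the missing $x$-monotone structure was used.

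The step I expect to require the most care, and the one explaining the hypothesis $s\ge n/2$, is controlling the depth of this recursion. To prove the bound for a fixed $k$ the recursion decreases $k$ by one at each level and stops at the trivial case $k=-1$, so it peels exactly $k+1$ vertices off the shelling sequence; for the largest relevant value $k=\lfloor n/2\rfloor-2$ this is $\lfloor n/2\rfloor-1$ peels. Since $s\ge n/2$ gives $s\ge\lfloor n/2\rfloor-1$, the successively shortened shelling sequence never runs out before the recursion terminates, so at every level there is still a shelling vertex on the outer face to remove; a drawing that is merely $s$-shellable for some $s\ge n/2$ therefore behaves, throughout this bounded-depth recursion, exactly like a fully shellable one. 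Verifying the weakly semisimple analogue of Observation~\ref{obs_outer}---that replacing simple-curve triangles by winding-number orientations does not invalidate the outer-face arguments---is the one genuinely new technical point, and it is handled by the reasoning already used in the proof of Lemma~\ref{lemma3}, where the inequality ${\rm ocr}(D)+E_1(D)\ge 3$ was verified for weakly semisimple drawings of $K_4$. Combining the resulting ${\le}{\le}k$-edge bound with the displayed inequality yields ${\rm ocr}(D)\ge Z(n)$.
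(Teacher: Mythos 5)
Your proposal is correct and takes essentially the same route as the paper: the paper's entire proof is the one-line observation that Lemma~\ref{lemma3} combined with the ${\le}{\le}k$-edge lower bound machinery of \'Abrego {\em et al.}\ for $s$-shellable drawings (the recursion whose depth bound of $n/2$ is exactly what the hypothesis $s\ge n/2$ feeds) yields $\text{ocr}(D)\ge Z(n)$, and this is precisely the shelling-driven adaptation of the induction from Theorem~\ref{thm2} that you reconstruct. The only difference is that the paper cites that induction rather than redoing it, so your write-up amounts to a more self-contained version of the same argument, correctly isolating the extension of Observation~\ref{obs_outer} to winding-number orientations as the one new point to check.
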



\section{Concluding remarks}

It would be interesting to see if techniques similar to those used in the proof of Theorem~\ref{theorem_1} can be used to prove Hill's conjecture for general drawings of complete graphs. We note that the same approach does not generalize to all drawings. For example, a particular planar realization of the so-called {\em cylindrical drawing\/}\mycite{guy60,HH63_on_number} of $K_{10}$, with crossing number $Z(10)$, does not satisfy the lower bound on ${\le\le}1$-edges from Theorem~\ref{thm2}. See Figure~\ref{obr2_K_6_K_10}, right. Figure~\ref{obr2_K_6_K_10}, left, shows an even smaller example, but this drawing of $K_6$ is not crossing optimal. Analogous cylindrical drawings of $K_{4k+6}$, for $k\ge 2$, violate the lower bound on ${\le}{\le}k$-edges from Theorem~\ref{thm2}.

\begin{figure}
 \begin{center}
  \includegraphics{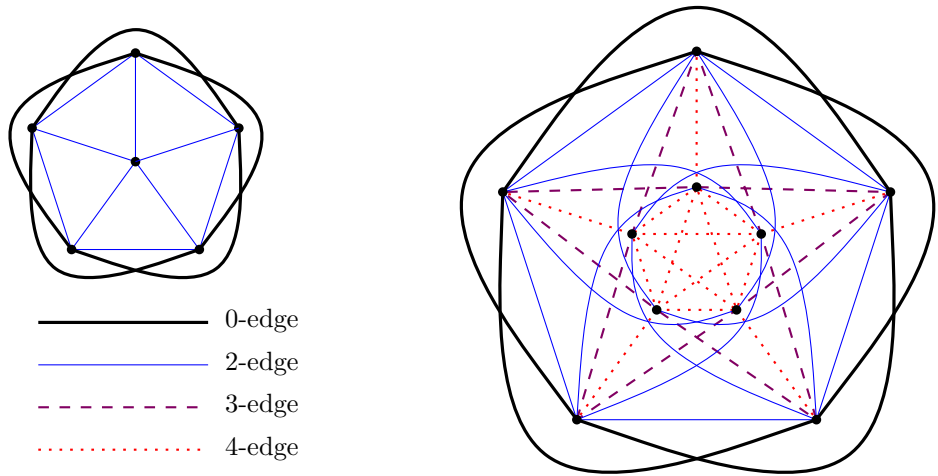}
  \caption{A general simple drawing of $K_{6}$ (left) and a cylindrical drawing of $K_{10}$ (right) where $E_{0}=5$ and $E_{1}=0$, hence $E_{{\le}{\le}1}=10<12=3{{1+3}\choose{3}}$.}
 \label{obr2_K_6_K_10}
 \end{center}
\end{figure}

Extrapolating the definitions of ${\le}k$-edges and ${\le}{\le}k$-edges, we
define the number of {\em ${\le}{\le}{\le}k$-edges}, $E_{{\le}{\le}{\le}k}(D)$, by the following identity.
\[
E_{{\le}{\le}{\le}k}(D) = \sum_{j=0}^{k}E_{{\le\le}j}(D) =\sum_{i=0}^k {k+2-i\choose 2}E_{i}(D).
\]
In our context, using ${\le}{\le}{\le}k$-edges seems to be even more natural than using ${\le}{\le}k$-edges, since the formula from Lemma~\ref{lemma1} can be rewritten in the following compact form:
\begin{align*}
{\rm cr}(D) &= 2 E_{{\le}{\le}{\le}\lfloor n/2 \rfloor-2}(D) -\frac{1}{8}n(n-1)(n-3) \hskip 1mm \text{ for } n \text{ odd, and} \\
{\rm cr}(D) &= E_{{\le}{\le}{\le}\lfloor n/2 \rfloor-3}(D) + E_{{\le}{\le}{\le}\lfloor n/2 \rfloor-2}(D) -\frac{1}{8}n(n-1)(n-2) \hskip 1mm \text{ for } n \text{ even}.
\end{align*}
We conjecture that the following lower bound on ${\le}{\le}{\le}k$-edges is satisfied by all simple drawings of complete graphs.

\begin{conjecture}\label{conjecture_nase}
Let $n \ge 3$ and let $D$ be a simple drawing of $K_n$. Then for every $k$ satisfying $0 \le k < n/2 - 1$, we have
\[E_{{\le}{\le}{\le}k}(D) \ge 3{k+4 \choose 4}.\]
\end{conjecture}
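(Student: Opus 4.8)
The plan is to imitate the inductive proof of Theorem~\ref{thm2}, but to carry one extra order of summation. First I would set up the analogous recursion. Pick a vertex $v$ incident with the outer face of $D$ and let $D'$ be the simple drawing of $K_{n-1}$ obtained by deleting $v$. By Observation~\ref{obs_outer}, for every $i$ with $0\le i\le k<n/2-1$ there are exactly two $i$-edges incident with $v$, so these edges contribute $2\sum_{i=0}^{k}\binom{k+2-i}{2}=2\binom{k+3}{3}$ to $E_{{\le}{\le}{\le}k}(D)$ via the defining identity. Every remaining edge $\gamma$ is an edge of $D'$, and, exactly as in the proof of Theorem~\ref{thm2}, it is either a $(D,D')$-invariant $j$-edge or a $j$-edge of $D'$ that becomes a $(j+1)$-edge of $D$. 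Comparing the weights $\binom{k+2-j}{2}$ and $\binom{k+1-j}{2}$ that $\gamma$ receives in $E_{{\le}{\le}{\le}k}(D)$ and in $E_{{\le}{\le}{\le}k-1}(D')$, an invariant $j$-edge contributes a surplus of $\binom{k+2-j}{2}-\binom{k+1-j}{2}=k+1-j$, while a non-invariant edge contributes no surplus. Writing $E_{{\le}{\le}k}(D,D')=\sum_{j=0}^{k}(k+1-j)E^{\mathrm{inv}}_{j}(D,D')=\sum_{m=0}^{k}E_{{\le}m}(D,D')$ for the second-order count of invariant edges, this gives the clean recursion
\[E_{{\le}{\le}{\le}k}(D)=2\binom{k+3}{3}+E_{{\le}{\le}{\le}k-1}(D')+E_{{\le}{\le}k}(D,D').\]

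The second step is a routine reduction. The induction starts at $n=3$ and $k=-1$ (with $E_{{\le}{\le}{\le}-1}=0$), just as in Theorem~\ref{thm2}. By the induction hypothesis $E_{{\le}{\le}{\le}k-1}(D')\ge 3\binom{k+3}{4}$, and since Pascal's rule gives $3\binom{k+4}{4}=3\binom{k+3}{4}+3\binom{k+3}{3}$, the conjectured bound $E_{{\le}{\le}{\le}k}(D)\ge 3\binom{k+4}{4}$ follows the moment one can guarantee the single inequality
\[E_{{\le}{\le}k}(D,D')\ \ge\ \binom{k+3}{3}\ =\ \sum_{m=0}^{k}\binom{m+2}{2}.\]
This is the second-order analogue of Corollary~\ref{cor_invariant}, which in the $x$-monotone case supplied the stronger \emph{term-by-term} estimate $E_{{\le}m}(D,D')\ge\binom{m+2}{2}$ for every $m\le k$.

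The hard part is establishing this invariant bound for an arbitrary simple drawing. The term-by-term estimate of Corollary~\ref{cor_invariant} cannot survive in this generality: it rested on Lemma~\ref{lemma_o_vnitrnich_vrcholech}, whose notions of \emph{right edges at $v_i$} and of \emph{topmost/bottommost} edges are purely monotone, and indeed the cylindrical drawings of Figure~\ref{obr2_K_6_K_10} already violate the first-order bound $E_{{\le}{\le}k}(D)\ge 3\binom{k+3}{3}$ of Theorem~\ref{thm2}. What makes the conjecture plausible is precisely that only the \emph{summed} quantity $\sum_{m=0}^{k}E_{{\le}m}(D,D')$ must reach $\binom{k+3}{3}$: individual levels may fall below $\binom{m+2}{2}$ provided the deficit is compensated at other levels. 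I would therefore abandon the level-by-level count and bound the total directly, using the general (non-monotone) Carath\'eodory theorem of Lemma~\ref{lemma_caratheodory} to produce, for a suitably chosen outer-face vertex $v$, a system of covering triangles whose edges are forced to be invariant ${\le}m$-edges, and then counting those edges with multiplicity rather than per level. An appealing variant is to exploit the freedom in the choice of $v$ (any outer-face vertex, or any face exposed by a sequence of edge flips as in the proof of Lemma~\ref{lemma2}) and to average $E_{{\le}{\le}k}(D,D')$ over this choice, so that the inequality need only hold on average before being realized by a single good vertex.

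I expect this last step to be the genuine obstacle, and the reason the statement has remained a conjecture. It asks for a structural lower bound on invariant ${\le}m$-edges in drawings that possess none of the linear-ordering structure exploited in Section~\ref{section2}; unlike the monotone case, there is no canonical vertical sweep to expose a fixed quota of invariant edges at each internal vertex, so any proof must extract the second-order surplus from the global combinatorics of the drawing rather than from a local, per-vertex mechanism.
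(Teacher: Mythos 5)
This statement is Conjecture~\ref{conjecture_nase} of the paper: it is \emph{open}. The paper offers no proof of it, only the remarks that Theorem~\ref{thm2} implies it for simple $x$-monotone drawings, that it is stronger than Hill's conjecture, and that all examples the authors examined (including the cylindrical drawings) satisfy it. So there is no proof of the paper's to compare yours against, and your proposal --- by your own admission in its final paragraphs --- does not close the gap either. Your verdict on yourself is correct: what you have is a reduction plus a research plan, not a proof.

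To be precise about what does and does not work: the recursion you set up is sound. For $v$ on the outer face and $D'=D-v$, Observation~\ref{obs_outer} indeed gives exactly two $i$-edges at $v$ for each $i\le k<n/2-1$, contributing $2\sum_{i=0}^{k}\binom{k+2-i}{2}=2\binom{k+3}{3}$, the weight comparison $\binom{k+2-j}{2}-\binom{k+1-j}{2}=k+1-j$ is right, and one obtains
\begin{equation*}
E_{{\le}{\le}{\le}k}(D)\;=\;2\binom{k+3}{3}+E_{{\le}{\le}{\le}k-1}(D')+\sum_{m=0}^{k}E_{{\le}m}(D,D'),
\end{equation*}
so that by Pascal's rule the conjecture would follow from $\sum_{m=0}^{k}E_{{\le}m}(D,D')\ge\binom{k+3}{3}$ for some admissible choice of $v$ at every stage of the induction. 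The genuine gap is that this last inequality is precisely where all the difficulty lives, and nothing in your sketch establishes it: the Carath\'eodory covering of Lemma~\ref{lemma_caratheodory} and the averaging over outer-face vertices are named as tools but never yield a count of invariant edges. Worse, it is not even clear that your sufficient condition is true, and it is certainly not forced by the conjecture: the cylindrical drawing of $K_{10}$ in Figure~\ref{obr2_K_6_K_10} has $E_0=5$, $E_1=0$, hence $E_{{\le}{\le}{\le}1}=3E_0+E_1=15=3\binom{5}{4}$, so the conjectured bound holds there \emph{with equality} at $k=1$ while the first-order bound of Theorem~\ref{thm2} fails. Consequently your induction has zero slack on this example --- every choice of $v$ would have to satisfy the invariant bound essentially exactly, with the deficit at one level compensated precisely at another --- and a single bad vertex would break the argument even though the statement being proved is true. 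Any actual proof must therefore either verify the per-vertex invariant inequality in this tight regime or extract the surplus by a mechanism that is not local to one vertex; identifying such a mechanism is exactly why this remains a conjecture.
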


Conjecture~\ref{conjecture_nase} is stronger than Hill's conjecture.
Theorem~\ref{thm2} implies Conjecture~\ref{conjecture_nase} for all simple $x$-monotone drawings.
All our examples of simple drawings of complete graphs, including the cylindrical drawings, also satisfy Conjecture~\ref{conjecture_nase}. We note that Conjecture~\ref{conjecture_nase} is trivially satisfied for $k=0$, since every simple drawing of a complete graph with at least three vertices has at least three $0$-edges---those incident with the outer face.

We have no counterexample even to the following conjecture, which further generalizes Conjecture~\ref{conjecture_nase} to arbitrary graphs. 

\begin{conjecture}\label{conjecture_nase_2}
Let $k \ge 0$ and let $D$ be a simple drawing of a graph with at least ${2k+3 \choose 2}$ edges. Then 
\[E_{{\le}{\le}{\le}k}(D) \ge 3{k+4 \choose 4}.\]
\end{conjecture}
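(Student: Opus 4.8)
The plan is to derive the statement from the complete-graph version, Conjecture~\ref{conjecture_nase}, which we are free to assume since it appears earlier. First I would record the elementary but crucial fact that any graph $G$ with at least ${2k+3 \choose 2}$ edges has at least $2k+3$ vertices, since ${n \choose 2}\ge {2k+3 \choose 2}$ forces $n\ge 2k+3$; consequently $0\le k< n/2-1$ and Conjecture~\ref{conjecture_nase} is applicable to every simple drawing of $K_n$ on the vertex set of $G$. The target value $3{k+4 \choose 4}$ is exactly the bound that Conjecture~\ref{conjecture_nase} predicts for $K_{2k+3}$, so the whole statement amounts to showing that, among all simple drawings of graphs with exactly ${2k+3 \choose 2}$ edges, the quantity $E_{{\le}{\le}{\le}k}$ is minimized at the value attained by an optimal drawing of $K_{2k+3}$.

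The main tool I would develop is a \emph{local monotonicity} lemma for the weights in the identity $E_{{\le}{\le}{\le}k}(D)=\sum_{i=0}^{k}{k+2-i \choose 2}E_{i}(D)$. Writing the contribution of an edge $e$ as $g(i_e)={k+2-i_e \choose 2}$ when $i_e\le k$ and $g(i_e)=0$ otherwise, one checks that $g$ is nonincreasing in $i_e$ and vanishes for $i_e>k$. Since $i_e$ is the number of common neighbours of the endpoints of $e$ lying on the smaller side of $e$, deleting a common neighbour of $e$ (by removing a vertex, or by removing one of the two edges that make a vertex a common neighbour) can only shift a vertex off the smaller side and hence cannot increase $i_e$; therefore such a deletion never decreases $g(i_e)$. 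In other words, \emph{creating triangles can only lower the weights of the surrounding edges}, while removing structure can only raise them. This makes precise the intuition that $E_{{\le}{\le}{\le}k}$ is driven down only by dense, clique-like local structure, which is what points toward $K_{2k+3}$ as the extremal configuration.

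Using this, I would attempt the reduction in two steps. If the drawing $D$ of $G$ extends to a simple drawing $D^{+}$ of $K_n$ on the same vertices, then Conjecture~\ref{conjecture_nase} gives $E_{{\le}{\le}{\le}k}(D^{+})\ge 3{k+4 \choose 4}$, and one would try to pass from $D^{+}$ back to $D$ by deleting the absent edges one at a time, bounding the net change with the local monotonicity lemma. When $n>2k+3$ one would alternatively try to peel off vertices carrying little weight while keeping the edge count above the threshold, and run an induction on $n$ that terminates at $K_{2k+3}$, where Conjecture~\ref{conjecture_nase} closes the argument.

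The hard part will be that $E_{{\le}{\le}{\le}k}$ is \emph{not} globally monotone under either operation: deleting an edge $e$ removes its own nonnegative contribution $g(i_e)$ but simultaneously raises the weights of every edge that had an endpoint of $e$ as a common neighbour, so the net change can have either sign, and the analogous vertex-deletion bound only controls $E_{{\le}{\le}{\le}k}(D)$ from above by $E_{{\le}{\le}{\le}k}(D-v)$, which is the wrong direction. Thus the local monotonicity lemma alone does not yield the comparison $E_{{\le}{\le}{\le}k}(D)\ge E_{{\le}{\le}{\le}k}(D^{+})$ the reduction needs, and moreover not every simple drawing of $G$ extends to a simple drawing of $K_n$. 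A genuine global extremal argument is therefore required, and it must cover two rather different regimes at once: the sparse, triangle-free drawings, where every edge is a $0$-edge and $E_{{\le}{\le}{\le}k}={k+2 \choose 2}{2k+3 \choose 2}$, and the dense drawings approaching $K_{2k+3}$. A short computation shows that both families already attain equality at $k=0$ (both give the value $3$), which rules out any proof that merely discards edges to reach a clique. Controlling this trade-off between the number of edges and the per-edge weights, so that the minimum is provably pinned at $K_{2k+3}$, is where I expect the real difficulty to lie.
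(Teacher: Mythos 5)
Your proposal cannot be checked against a proof in the paper, because there is none: the statement you are attacking is Conjecture~\ref{conjecture_nase_2}, which the paper leaves open (the authors only remark that they know of no counterexample and speculate that it ``might be susceptible to a proof by induction on the number of edges''). This matters already for your first step: you declare yourself ``free to assume'' Conjecture~\ref{conjecture_nase} because it appears earlier, but that statement is also an unproven conjecture --- the paper establishes the bound $E_{{\le}{\le}{\le}k}\ge 3\binom{k+4}{4}$ only for simple \emph{$x$-monotone} drawings of \emph{complete} graphs, via Theorem~\ref{thm2} and the identity $E_{{\le}{\le}{\le}k}=\sum_{j\le k}E_{{\le}{\le}j}$. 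So at best your plan could yield the conditional implication ``Conjecture~\ref{conjecture_nase} implies Conjecture~\ref{conjecture_nase_2}'', not a proof of the statement itself.

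Even that implication is not established by your outline, for reasons you largely identify yourself. First, the extension step fails at the outset: a simple drawing $D$ of $G$ need not extend to a simple drawing $D^{+}$ of $K_n$ on the same vertex set (a missing edge can be forced to cross an adjacent edge, or to cross some edge twice), so the object your reduction starts from may not exist. Second, even when $D^{+}$ exists, your local monotonicity lemma points the wrong way: for each edge $e$ of $G$ one has $i_e(D)\le i_e(D^{+})$, hence $g(i_e(D))\ge g(i_e(D^{+}))$, which gives only
\[
E_{{\le}{\le}{\le}k}(D)\;\ge\; E_{{\le}{\le}{\le}k}(D^{+})-\sum_{e\notin E(G)} g\bigl(i_e(D^{+})\bigr),
\]
and the subtracted term --- the mass carried by the deleted edges --- is uncontrolled, so the needed comparison $E_{{\le}{\le}{\le}k}(D)\ge E_{{\le}{\le}{\le}k}(D^{+})$ does not follow (and is likely false in general). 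Your closing paragraph concedes that the ``genuine global extremal argument'' balancing the two tight regimes (triangle-free drawings, where every edge is a $0$-edge and the count is $\binom{k+2}{2}\binom{2k+3}{2}$, versus clique-like drawings near $K_{2k+3}$, both achieving equality at $k=0$) is missing. That trade-off \emph{is} the entire content of the conjecture. What you have written is a sensible reconnaissance of why the problem is hard, consistent with the paper's own remarks about non-complete graphs, but it is neither a proof nor even a complete conditional reduction.
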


Note that in a drawing of a general graph with $n$ vertices, a $k$-edge contained in $t$ triangles is also a $(t-k)$-edge, but not necessarily an $(n-2-k)$-edge. Thus, for example, in every drawing of a triangle-free graph, every edge is a $0$-edge. This suggests that it might be easier to prove 
Conjecture~\ref{conjecture_nase_2} for non-complete graphs. Also, Conjecture~\ref{conjecture_nase_2} or some still stronger variant might be susceptible to a proof by induction on the number of edges.

Further, it would be interesting to generalize Theorem~\ref{theorem_1} to
arbitrary monotone drawings, where adjacent edges are also allowed to cross oddly. For such drawings, two notions of the crossing number are of interest. The {\em monotone odd crossing number}, $\text{mon-ocr}(G)$, counting the minimum number of pairs of edges crossing an odd number of times, and the {\em monotone independent odd crossing number}, $\text{mon-iocr}(G)$, or, $\text{mon-ocr}_-(K_n)$, counting the number of pairs of nonadjacent edges crossing an odd number of times. For every graph, we have $\text{mon-ocr}_-(G) \le \text{mon-ocr}(G) \le \text{mon-ocr}_{\pm}(G)$.

\subsection{Order types and \texorpdfstring{$\lambda$}{lambda}-matrices}

By Lemma~\ref{lemma2}, the crossing number of a semisimple drawing of $K_n$ is determined by the number of $k$-edges for all $k$. For a set of points $p_1,p_2,\dots,p_n$ in the plane, Goodman and Pollack\mycite{GP83_multidimensional} introduced the {\em $\lambda$-matrix\/} $(\lambda(i,j))$, where for every $i\neq j$, $\lambda(i,j)$ is the number of points to the left of the directed line $p_ip_j$, and $\lambda(i,i)=0$. They showed that the $\lambda$-matrix determines the order type of the point set. Aichholzer {\em et al.}\mycite{AK07_lambda} used $\lambda$-matrices to represent point sets for computing lower bounds on the rectilinear crossing number of complete graphs.

The $\lambda$-matrix may be defined for semisimple drawings of $K_n$ with vertices $v_1, v_2, \dots, v_n$ in a similar way: for every $i\neq j$, $\lambda(i,j)$ is the number of triangles $v_iv_jv_l$ oriented counter-clockwise. Clearly, $v_iv_j$ is a $k$-edge if and only if $\lambda(i,j)\in \{k,n-2-k\}$. The order type of a drawing determines its $\lambda$-matrix, but not the drawing itself (see Figure~\ref{fig_shellable_monotone_K4} or Figure~\ref{fig_shellable_monotone_K5}). Therefore, the $\lambda$-matrix does not determine the drawing either. However, a generalization of Goodman's and Pollack's result to semisimple drawings is true.

\begin{observation}
The $\lambda$-matrix of a semisimple drawing of $K_n$ determines its order type.
\end{observation}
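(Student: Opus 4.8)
The plan is to follow and topologically adapt the reconstruction of Goodman and Pollack\cite{GP83_multidimensional}. Write $\mathrm{or}(i,j,l)\in\{+,-\}$ for the orientation of the triangle $v_iv_jv_l$ traversed in this order; this is the alternating function that agrees with $\sigma$ on increasing triples, so knowing $\sigma$ is the same as knowing all values $\mathrm{or}(i,j,l)$. For an ordered pair $(i,j)$ let $L_{ij}=\{l : \mathrm{or}(i,j,l)=+\}$ be the set of vertices on the left of the oriented edge $v_iv_j$; then $\mathrm{or}(i,j,k)=+$ exactly when $k\in L_{ij}$, so recovering the order type is equivalent to recovering every set $L_{ij}$. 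The only data at hand are the cardinalities $\lambda(i,j)=|L_{ij}|$ (with $\lambda(i,j)+\lambda(j,i)=n-2$). First I would record this reduction: the task is to show that these cardinalities already determine the sets.

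The key step is to exhibit, for each fixed $v_i$, a cyclic order $c_i$ of $[n]\setminus\{i\}$ for which every semispace $L_{ij}$ is an \emph{arc}, namely the block of the $\lambda(i,j)$ vertices immediately preceding $v_j$ in $c_i$. When $v_i$ lies on the outer face this is precisely Observation~\ref{obs_outer}: taking $c_i$ to be the rotation $\rho_i$ read from the outer face, the nested clockwise triangles guaranteed there force $L_{ij}$ to be the initial segment of the predecessors of $v_j$. For an interior vertex I would transfer this using semisimplicity: the edges leaving $v_i$ are pairwise non-crossing and hence form a topological star with cyclic edge order $\rho_i$, and sweeping the oriented edge $v_iv_j$ through this star should change the membership of $L_{ij}$ one vertex at a time, producing the arc structure. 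Alternatively, a sequence of edge flips (as in the proof of Lemma~\ref{lemma2}) moves any chosen face to the outer face; tracking how the triangle orientations, and hence the sets $L_{ij}$, transform under a single flip would reduce the claim to the outer-vertex case.

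Granting this arc structure, the family $\{L_{ij}\}_j$ becomes a \emph{circular sequence of semispaces}, and the combinatorial sorting argument of Goodman and Pollack\cite{GP83_multidimensional} then reconstructs, from the cardinalities $\lambda(i,\cdot)$ together with the remaining rows of the matrix, both the cyclic order $c_i$ and each individual arc $L_{ij}$. Carrying this out for every $i$ recovers all the sets $L_{ij}$, hence $\mathrm{or}$ and $\sigma$, which is the desired conclusion. I expect the structural step of the second paragraph to be the main obstacle: unlike in the rectilinear case there is no rotating line to invoke, and the orientation of a triangle $v_iv_jv_l$ is a \emph{global} quantity that also depends on the edges $v_jv_l$ and $v_iv_l$, not only on the local rotation at $v_i$. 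Making the arc property rigorous for interior vertices of an arbitrary semisimple topological drawing, with the outer-vertex case (Observation~\ref{obs_outer}) and the non-crossing of the edges at $v_i$ as the only available levers, is where the real work lies.
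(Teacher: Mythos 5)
Your opening reduction is fine, and you have correctly located the crux in the arc (``roundness'') property of the sets $L_{ij}$ at an \emph{interior} vertex --- but that property is false, even for simple drawings, so the Goodman--Pollack route cannot be completed. Concretely: draw $K_4$ on $v_k,v_l,v_m,v_i$ planarly, with a counter-clockwise triangle $v_kv_lv_m$, the vertex $v_i$ inside it, and three straight spokes $v_iv_k,v_iv_l,v_iv_m$ cutting the interior into regions $R_{kl},R_{lm},R_{mk}$. Place a fifth vertex $v_j$ in $R_{mk}$, draw $v_iv_j$ straight, draw $v_jv_k$ directly inside $R_{mk}$, draw $v_jv_l$ so that it crosses the spoke $v_iv_k$ once and then runs to $v_l$ inside $R_{kl}$, and draw $v_jv_m$ so that it crosses $v_iv_k$ and then $v_iv_l$ once each and then runs to $v_m$ inside $R_{lm}$. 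This is a simple drawing of $K_5$ (three crossings, all between independent edges). Near $v_i$, the interiors of the triangles $v_iv_jv_k$, $v_iv_jv_l$, $v_iv_jv_m$ occupy the sectors swept counter-clockwise from $v_iv_j$ to $v_iv_k$, to $v_iv_l$, and to $v_iv_m$ respectively, so all three triangles are oriented counter-clockwise. Hence at $v_i$ we get $L_{ij}=\{k,l,m\}$, while $L_{ik}=\{l\}$, $L_{il}=\{m\}$, $L_{im}=\{k\}$: in tournament language, $j$ dominates the $3$-cycle $k\to l\to m\to k$. No cyclic order $c_i$ can realize these four sets as arcs of the kind you prescribe: the three singletons force $l$ to immediately precede $k$, $m$ to immediately precede $l$, and $k$ to immediately precede $m$, which already closes a $3$-cycle and leaves no position for $j$. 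So the structure you hope to transfer from Observation~\ref{obs_outer} (where the tournament is genuinely transitive, hence round) simply does not exist at interior vertices; your ``sweeping'' intuition fails because consecutive edges in the rotation at $v_i$ can have completely unrelated left-sets, and your fallback via edge flips is circular in a reconstruction argument --- to know how the $\lambda$-matrix transforms under a flip you must already know which triangles flip, i.e.\ the order type you are trying to recover.

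The paper's proof avoids interior vertices altogether and is much shorter: in every semisimple drawing of a graph with at least one edge, each edge incident with the outer face is a $0$-edge, so the $\lambda$-matrix always contains an entry $\lambda(i,j)=0$; such an entry by itself forces the orientation of \emph{every} triangle containing $v_iv_j$ (they are all clockwise when read as $(i,j,\cdot)$); one then deletes that edge, updates the affected entries of the matrix using these now-known orientations, and recurses over subgraphs of $K_n$. Note that this induction only ever harvests information at the current outer face, which is exactly why it is immune to the counterexample above, where the troublesome vertex $v_i$ is interior.
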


This is easily seen by induction over all subgraphs of $K_n$: in every semisimple drawing of a graph with at least one edge, all edges incident with the outer face are $0$-edges. In particular, there is an edge $v_iv_j$ such that $\lambda(i,j)=0$. Every $0$-edge determines the orientation of all incident triangles. Therefore, we may remove such an edge, update the $\lambda$-matrix and use induction for the smaller graph.

The same observation is no longer true for weakly semisimple drawings: in the drawing in Figure~\ref{fig_weakly_shell_K4_ocr2}, right, every edge is a $1$-edge. Therefore, its $\lambda$-matrix is identical with the $\lambda$-matrix of a mirror-symmetric drawing, but these two drawings have mutually inverse order types.

Since the crossing number of a semisimple drawing of a complete graph is determined by its $\lambda$-matrix, it might be interesting to investigate the properties of $\lambda$-matrices that can be realized by semisimple drawings of complete graphs.

\section*{Acknowledgments}

We would like to thank Pavel Valtr for initializing the research which led to this problem and Marek Eli\'{a}\v{s} for developing visualization tools that were helpful during the research.


\end{document}